\def\@tocline#1#2#3#4#5#6#7{\relax
  \ifnum #1>\c@tocdepth 
  \else
    \par \addpenalty\@secpenalty\addvspace{#2}%
    \begingroup \hyphenpenalty\@M
    \@ifempty{#4}{%
      \@tempdima\csname r@tocindent\number#1\endcsname\relax
    }{%
      \@tempdima#4\relax
    }%
    \parindent\z@ \leftskip#3\relax \advance\leftskip\@tempdima\relax
    \rightskip\@pnumwidth plus4em \parfillskip-\@pnumwidth
    #5\leavevmode\hskip-\@tempdima
      \ifcase #1
       \or\or \hskip 1em \or \hskip 2em \else \hskip 3em \fi%
      #6\nobreak\relax
      \dotfill
      \hbox to\@pnumwidth{\@tocpagenum{#7}}
    \par
    \nobreak
    \endgroup
  \fi}
\newtheorem{theorem}{Theorem}[section]
\newtheorem{lemma}[theorem]{Lemma}
\newtheorem{proposition}[theorem]{Proposition}
\theoremstyle{definition}
\newtheorem{definition}[theorem]{Definition}
\newtheorem{remark}[theorem]{Remark}
\newcommand{\N}{{\mathbb N}}
\newcommand{\R}{{\mathbb R}}
\newcommand{\tr}{\mathrm{tr}^*}
\newcommand{\loc}{\mathrm{loc}}
\newcommand{\beqn}{\begin{eqnarray}}
\newcommand{\eeqn}{\end{eqnarray}}   
\newcommand{\beq}{\begin{eqnarray*}}
\newcommand{\eeq}{\end{eqnarray*}}
\newcommand{\be}{\small\begin{equation}}
\newcommand{\bel}[1]{\small\begin{equation}\label{#1}}
\newcommand{\ee}{\end{equation}\normalsize}
\newcommand{\BA}{\begin{array}}
\newcommand{\EA}{\end{array}}
\newcommand{\BAN}{\renewcommand{\arraystretch}{1.2}
\setlength{\arraycolsep}{2pt}\begin{array}}
\newcommand{\BAV}[2]{\renewcommand{\arraystretch}{#1}
\setlength{\arraycolsep}{#2}\begin{array}}
\newcommand{\BSA}{\begin{subarray}}
\newcommand{\ESA}{\end{subarray}}
\newcommand{\BAL}{\begin{aligned}}
\newcommand{\EAL}{\end{aligned}}
\newcommand{\forevery}{\quad \forall}
\newcommand{\norm}[1]{\left \|#1\right \|}
\newcommand{\supp}{\mathrm{supp}\,}
\newcommand{\dist}{\mathrm{dist}\,}
\newcommand{\diam}{\mathrm{diam}\,}
\newcommand{\sbs}{\subset}
\def\dist{\mathrm{dist}}
\def \dd {\mathrm{d}}
                         \def\vge{\varepsilon}
\def\gm{\mu}        \def\gn{\nu}         
            \def\vgr{\varrho}
       \def\gt{\tau}
   \def\gv{\vartheta}   \def\gw{\omega}
\def\Gw{\Omega}              
\def\CS{{\mathcal S}}      \def\CN{{\mathcal N}}
   \def\CO{{\mathcal O}}   
\def\CA{{\mathcal A}}   \def\CB{{\mathcal B}}   
      \def\CF{{\mathcal F}}
   \def\CH{{\mathcal H}}
\def\BBA {\mathbb A}   \def\BBB {\mathbb B}    
\def\BBG {\mathbb G}   \def\BBH {\mathbb H}    
\def\BBJ {\mathbb J}   \def\BBK {\mathbb K}    
\def\BBM {\mathbb M}   \def\BBN {\mathbb N}    
   \def\BBR {\mathbb R}    
\def\BBW {\mathbb W}   \def\BBX {\mathbb X}
\def\GTM {\mathfrak M}
\def\tr{\mathrm{tr}_\mu}
\newcommand{\ei}{\phi_{\xm}}
\newcommand{\xa}{\alpha}
\newcommand{\xb}{\beta}
\newcommand{\xg}{\gamma}
\newcommand{\xG}{\Gamma}
\newcommand{\xd}{\delta}
\newcommand{\xe}{\varepsilon}
\newcommand{\xk}{\kappa}
\newcommand{\xl}{\lambda}
\newcommand{\xL}{\Lambda}
\newcommand{\xm}{\mu}
\newcommand{\xn}{\nu}
\newcommand{\xs}{\sigma}
\newcommand{\xS}{\Sigma}
\newcommand{\xf}{\phi}
\newcommand{\xo}{\omega}
\newcommand{\xO}{\Omega}
\newcommand{\myfrac}[2]{{\displaystyle \frac{#1}{#2} }}
\def \dd {\mathrm{d}}
\def \dx {\mathrm{d}x}
\newcommand{\am}{{\xa_{\scaleto{-}{3pt}}}}
\def\bal#1\eal{\small\begin{align*}#1\end{align*}\normalsize}
\def\ba#1\ea{\small\begin{align}#1\end{align}\normalsize}
\numberwithin{equation}{section}
\begin{document}

\title[Semilinear Schr\"odinger equations with Hardy potentials]{Semilinear Schr\"odinger equations with Hardy potentials involving the distance to a boundary submanifold and gradient source nonlinearities}
\author[K. T. Gkikas]{Konstantinos T. Gkikas}
\address{K.T. Gkikas, Department of Mathematics, University of the Aegean, 
83200 Karlovassi, Samos, Greece\newline
Department of Mathematics, National and Kapodistrian University of Athens, 15784 Athens, Greece
}
\email{kgkikas@aegean.gr}
\author[M. Paschalis]{Miltiadis Paschalis}
\address{M. Paschalis, Department of Mathematics, National and Kapodistrian University of Athens, 15784 Athens, Greece}
\email{mpaschal@math.uoa.gr}
\date{\today}

\begin{abstract} 
Let $\Omega\subset\R^N$ ($N\geq 3$) be a bounded $C^2$ domain and $\Sigma\subset\partial\Omega$ be a compact $C^2$ submanifold of dimension $k$. Denote the distance from $\Sigma$ by $d_\Sigma$. In this paper, we study positive solutions of the equation $(*)\, -\Delta u -\mu u/d_\Sigma^2 = g(u,|\nabla u|)$ in $\Omega$, where $\mu\leq \big( \frac{N-k}{2} \big)^2$ and the source term $g:\R\times\R_+ \rightarrow \R_+$ is continuous and non-decreasing in its arguments with $g(0,0)=0$. In particular, we prove the existence of solutions of $(*)$ with boundary measure data $u=\nu$ in two main cases, provided that the total mass of $\nu$ is small. In the first case $g$ satisfies some subcriticality conditions that always ensure the existence of solutions. In the second case we examine power type nonlinearity $g(u,|\nabla u|) = |u|^p|\nabla u|^q$, where the problem may not possess a solution for exponents in the supercritical range. Nevertheless we obtain criteria for existence under the assumption that $\nu$ is absolutely continuous with respect to some appropriate capacity or the Bessel capacity of $\Sigma$, or under other equivalent conditions. 
	
\medskip

\noindent\textit{Key words: Hardy Potential, Singular Solutions, Critical Exponent, Gradient
Term, Capacities.}

\medskip

\noindent\textit{Mathematics Subject Classification: 35J10, 35J25, 35J60} 
\end{abstract}

\maketitle
\tableofcontents

\section{Introduction}

Let $\Omega\subset\R^N$ ($N\geq 3$) be a bounded domain of class $C^2$, $\Sigma\subset\partial\Omega$ a compact $C^2$ submanifold of the boundary of dimension $k\in\{0,\ldots,N-1 \}$. Let $\mu\leq \big(\frac{N-k}{2}\big)^2$ and denote $d_\Sigma(x):=\mathrm{dist}(x,\Sigma)$. In this paper, we investigate the existence of solutions to the boundary value problem with measure data for the semilinear elliptic equation
\bal
-L_\mu u = g(u,|\nabla u|)\quad \text{in }\Omega,
\eal
where $L_\mu:= \Delta + \mu d_\Sigma^{-2}$ and $g:\R\times\R_+ \rightarrow \R_+$ is a continuous and non-decreasing in its two arguments nonlinear source term with $g(0,0)=0$. The term $\mu d_\Sigma^{-2}$ is called a Hardy potential in view of the related Hardy inequalities in which such terms appear and which are a central feature of the analysis of Schr\"odinger operators with singular potentials. A particularly important type of nonlinearity to keep in mind is the power-type $g(u,|\nabla u|) = |u|^p|\nabla u|^q$, with which a large portion of this paper is concerned.

\subsection{Background and main results}

The study of Schr\"odinger operators has long been a central feature of the theory of elliptic and parabolic partial differential equations. In recent years there has been a surge of research regarding such operators involving singular potentials, particularly Hardy-type potentials, as well as the parallel study of related Hardy inequalities which are a key aspect of that theory; for example see \cite{BEL,BFT,BFT3,BM,BMS,C,DD2,FF,FT,GM,hardy-marcus} and references therein. 

An important class of differential equations arising in this setting is without doubt the semilinear Schr\"odinger equations. If the related Schr\"odinger operator is $-\Delta$, without any additional potential, the equation usually assumes the general form $-\Delta u = \pm g(u,|\nabla u|)$, subject to boundary conditions given in terms of some appropriate boundary trace. A theory for this is already well-established and now considered somewhat classical, see e.g. \cite{BVi,GV,MVbook,Vbook}; note however that there is still activity in the area as the framework is very general and admits a number of improvements, see e.g. \cite{BHV,BVN}.

In \cite{BHV} Bidaut-V\'eron, Hoang and V\'eron investigate existence of solutions of the boundary value problem with measure data for equation with mixed source term $-\Delta u = |u|^{q_1}|\nabla u|^{q_2},$ where $q_1,q_2\geq 0$ are such that $q_1+q_2>1$ and $q_2<2$. In particular, they provide equivalent conditions for existence provided that the mass of the boundary measure is small by extensively utilising the abstract setting of integral equations developed by Kalton and Verbitsky \cite{KV}.

In recent years there have been developments of the theory in the case where a Hardy-type potential is present; in this case the relevant Schr\"odinger operator is $-\Delta-\mu/d_\Sigma^2$ where $\Sigma$ is a submanifold of $\overline{\Omega}$, $d_\Sigma= \mathrm{dist}(\cdot,\Sigma)$ and $\mu$ is a constant. This case features the added difficulty of the presence of a strong singularity in the domain or its boundary. For the relevant literature, see e.g. \cite{BMM,BMR,D,DN,GkiNg_absorption,GN2,GkiNg_linear,GN-gradient,GN1,GkV,MarMor,MT,MarNgu}. In recent paper \cite{GN-gradient}, Gkikas and Nguyen investigate solutions of equation $-\Delta u -\mu u/ d_{\partial\Omega}^{2} = g(u,|\nabla u|)$ with measure boundary data, among other things, and prove the existence of solutions in both a subcritical and supercritical setting, using Green and Martin kernel estimates established by Fillipas, Moschini and Tertikas \cite{FMoT2}.

In the present, we use the recently derived Green and Martin kernel estimates established by Barbatis, Gkikas and Tertikas \cite{BGT} to study semilinear Schr\"odinger equations with gradient-dependent source terms, in which the Hardy potential has the singularity on some submanifold of the boundary. Let $N\geq 3,$ $\xO\subset\mathbb{R}^N$ be a $C^2$ bounded domain and $\xS \subset \partial\Omega$ be a $C^2$ compact submanifold in $\mathbb{R}^N$ without boundary, of dimension $k$, where $0\leq k \leq N-1$. We assume that  $\xS = \{0\}$ if  $k = 0$ and $\xS=\partial\xO$ if $k=N-1.$ Let $d(x)=\dist(x,\partial \Omega)$ and $d_\xS(x)=\dist(x,\xS)$. Let $\xm\leq \big(\frac{N-k}{2}\big)^2$ be a parameter and
\bal
L_\mu u: =\Delta u + \frac{\mu}{d_\xS^2}u\quad \textrm{in }\Omega.
\eal

In the case $\xS\subsetneqq\partial\xO$ where $0 \leq k\leq N-2$, the linear equation $L_\xm u=0$ was extensively investigated in recent papers \cite{BGT,MT} where the optimal Hardy constant
\bal
C_{\xO,\xS}:=\inf_{u\in H_0^1(\xO)}\frac{\int_\xO|\nabla u|^2dx}{\int_\xO |u|^2 d_\xS^{-2}dx}
\eal
is deeply involved. It is known (see e.g. \cite{FF}) that  $0<C_{\xO,\xS} \leq \big(\frac{N-k}{2}\big)^2$. Moreover, when  $\xm< \big(\frac{N-k}{2}\big)^2$, there exists a minimizer $\ei\in H_0^1(\xO)$ of the Dirichlet eigenvalue problem
\ba\label{eigenvalue}
\xl_{\xm}:=\inf_{u\in H_0^1(\xO)}\frac{\int_\xO|\nabla u|^2dx-\xm\int_\xO |u|^2 d_\Sigma^{-2}dx}{\int_\xO |u|^2 dx}>-\infty.
\ea
If $\xm=\big(\frac{N-k}{2}\big)^2$ then \eqref{eigenvalue} holds true, however there is no minimizer in $H_0^1(\xO)$. The reader is referred to \cite{FF} for more detail. In addition, by \cite[Proposition A.2]{BGT} (see also \cite[Lemma 2.2]{MT}), the corresponding eigenfunction $\ei$ satisfies the following pointwise estimate
\bal
\ei(x) \approx d(x)d^{-\am}_\Sigma(x),\quad x\in\Omega ,
\eal
where
\bal
\xa_\pm:=H\pm\sqrt{H^2-\xm} \quad\text{and}\quad H:=\frac{N-k}{2}.
\eal
Our main assumption in this paper is that $\lambda_\mu>0$ and $\mu\leq H^2$; this is always the case when $\mu<C_{\Omega,\Sigma}$, but not exclusively so.

Our aim is to study the existence of weak solutions for the boundary value problem
\be\label{intro-bvp}
\left\{ \begin{array}{ll}
	-L_\mu u &= g(u,|\nabla u|) \text{ in }\Omega\\
	\tr(u)&=\vgr\nu\\
\end{array}
\right.
\ee
where the nonlinear source term $g:\R\times\R_+ \rightarrow \R$ is continuous and non-decreasing in its arguments, $\nu$ is a Radon measure on $\partial\Omega$ such that $\| \nu \|_{\GTM(\partial\Omega)}=1$,  $\vgr$ is a positive parameter and $\tr u$ is an appropriate trace which is defined in a dynamic way in \cite{BGT,GkV} as follows.

\begin{definition}
Let $p>1$. A function $u\in W^{1,p}$ possesses an $L_\mu$-boundary trace $\nu\in\GTM(\partial\Omega)$ (with basis at $x_0\in\Omega$) if for any smooth exhaustion $\{\Omega_n\}$ of $\Omega$ there holds
\bal
\lim_{n\rightarrow\infty} \int_{\partial\Omega_n} \phi u\,d\omega^{x_0}_{\Omega_n} = \int_\Omega \phi\,d\nu \quad \forall \phi\in C(\overline{\Omega}),
\eal
where $\omega^{x_0}_{\Omega_n}$ is the $L_\mu$-harmonic measure (with basis at $x_0$).
\end{definition}

\noindent
Note that this is equivalent to $u d\omega^{x_0}_{\Omega_n} \rightharpoonup d\nu$ in $\GTM(\overline{\Omega})$. If this is the case we write $\tr u := \nu$. A weak solution of \eqref{intro-bvp} is now defined as follows.

\begin{definition}
We say that $u$ is a \textit{weak solution} of \eqref{intro-bvp} if $u\in L^1(\Omega;\ei)$, $g(u,|\nabla u|)\in L^1(\Omega;\ei)$ and 
\bal
-\int_\Omega uL_\mu \zeta\,dx = \int_\Omega g(u,|\nabla u|)\zeta\,dx - \vgr \int_\Omega \BBK_\mu[\nu]L_\mu\zeta\,dx \quad \forall \zeta \in \BBX_\mu(\Omega,\Sigma).
\eal
where
\bal
\BBX_\mu(\Omega,\Sigma):= \{ \zeta\in H^1_\loc(\Omega): \ei^{-1}\zeta\in H^1(\Omega;\ei^2) \text{ and } \ei^{-1}L_\mu\zeta \in L^\infty(\Omega) \}
\eal
and $\BBK_\mu$ is the Martin operator related to $L_\mu$.
\end{definition}

\noindent
This will be clarified in the next section.

The layout of the paper is as follows. In Section \ref{preliminaries} we provide the necessary background material, as well as some technical tools to be used in the sequel. In Section \ref{weak-lebesgue}, we derive weak Lebesgue estimates for the Green and Martin operators related to $L_\mu$. In fact, our results are general enough to include estimates for the gradient of these operators, a feature necessary for the treatment of gradient-dependent nonlinearities.

In Section \ref{subctitical-problem}, we prove the existence of solutions for \eqref{intro-bvp} under some subcriticality conditions, in particular when $g$ is such that
\bal
\Lambda_g:= \int_1^\infty g(s,s^{p_*/q_*})s^{-1-p_*}\,ds < \infty \qquad \text{and} \qquad g(as,bt)\leq c(a^p+b^q)g(s,t)
\eal
for some $p,q>1$, $c>0$ and all $a,b,s,t\in \R_+$ ($p_*$ and $q_*$ are critical exponents, to be determined later). The main tool used here, aside from other technicalities developed in order to ensure its applicability, is the Schauder fixed point theorem, which has been successfully used in the treatment of similar problems; see for example \cite{BHV,GN1,GN-gradient,GkiNg_source}. The proof also depends on an auxiliary result where $g$ is assumed to be sufficiently smooth and bounded, as well as the Vitali convergence theorem. Our main result is the following.

\begin{theorem}\label{subcritical}
Let $\nu\in \GTM_+(\partial\Omega)$ with $\|\nu\|_{\GTM(\partial\Omega)}=1$ and \bal
p_*:= \min \bigg\{ \frac{N+1}{N-1},\frac{N-\xa_-+1}{N-\xa_--1} \bigg\},\quad q_*:= \min \bigg\{ \frac{N+1}{N},\frac{N-\xa_-+1}{N-\xa_-} \bigg\}.
\eal
Assume that  $g$ satisfies 
\ba\label{subcond}
\Lambda_g:= \int_1^\infty g(s,s^{p_*/q_*})s^{-1-p_*}\,ds < \infty \qquad \text{and} \qquad g(as,bt)\leq c(a^p+b^q)g(s,t)
\ea
for some $p,q>1$, $c>0$ and all $a,b,s,t\in \R_+$. Then there exists a positive $\vgr_0=\vgr(\mu,\Omega,\xL_g,c,p,q)$ such that the problem
\be\label{subcrit-bvp}
\left\{ \begin{array}{ll}
	-L_\mu u &= g(u,|\nabla u|) \text{ in }\Omega\\
	\tr(u)&=\vgr\nu\\
\end{array}
\right.
\ee
possesses a positive weak solution for all $\vgr\in (0,\vgr_0)$.
\end{theorem}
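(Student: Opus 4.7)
The plan is to recast \eqref{subcrit-bvp} as a fixed-point equation $u=T(u)$, where
\bal
T(u) := \BBG_\mu[g(u,|\nabla u|)] + \varrho\, \BBK_\mu[\nu],
\eal
and apply Schauder's theorem in a suitably chosen closed, convex subset of a weighted weak Lebesgue (Lorentz) space. The critical exponents $p_*$ and $q_*$ in the statement are precisely those for which the Martin potential $\BBK_\mu[\nu]$ and its gradient belong to $L^{p_*,\infty}(\Omega;\ei\,dx)$ and $L^{q_*,\infty}(\Omega;\ei\,dx)$, respectively, by the weak Lebesgue estimates derived in Section \ref{weak-lebesgue}; that same section furnishes the analogous mapping properties of the Green operator $\BBG_\mu$ acting on $L^1(\Omega;\ei)$.

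I would first treat the auxiliary case in which $g$ is smooth and bounded. Set
\bal
\mathcal{O}_M := \bigl\{ u\in L^1(\Omega;\ei): u\geq 0,\; \|u\|_{L^{p_*,\infty}(\Omega;\ei)} + \|\nabla u\|_{L^{q_*,\infty}(\Omega;\ei)} \leq M\bigr\}.
\eal
Using the scaling hypothesis $g(as,bt)\leq c(a^p+b^q)g(s,t)$, the tail bound $\Lambda_g<\infty$ and the weak Lebesgue estimates of Section \ref{weak-lebesgue}, a Marcinkiewicz-type computation in the spirit of \cite{BHV,GN-gradient} should yield, for every $u\in\mathcal{O}_M$,
\bal
\|T(u)\|_{L^{p_*,\infty}(\Omega;\ei)}+\|\nabla T(u)\|_{L^{q_*,\infty}(\Omega;\ei)} \leq C_1\varrho + C_2\,(M^p+M^q)\,\Lambda_g.
\eal
Choosing $M:=2C_1\varrho$ and then $\varrho_0=\varrho_0(\mu,\Omega,\Lambda_g,c,p,q)$ sufficiently small forces $T(\mathcal{O}_M)\subset\mathcal{O}_M$. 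Continuity of $T$ on $\mathcal{O}_M$ follows from dominated convergence applied to the Green--Martin representation, and compactness from interior elliptic regularity combined with the boundary decay estimates of $\BBG_\mu$ and $\BBK_\mu$ that tame the behaviour near $\Sigma$ and $\partial\Omega$. Schauder's theorem then produces a non-negative fixed point $u$, which is a weak solution of the regularized problem.

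For general $g$, I would approximate $g$ from below by an increasing sequence of smooth bounded nonlinearities $g_n$ satisfying \eqref{subcond} with constants uniform in $n$. The corresponding solutions $u_n$ produced above all lie in the same bounded set $\mathcal{O}_M$, so along a subsequence $u_n\to u$ and $\nabla u_n \to \nabla u$ almost everywhere in $\Omega$ by standard elliptic compactness. Passing to the limit in the weak formulation reduces to verifying uniform integrability of $\{g_n(u_n,|\nabla u_n|)\,\ei\}$ on $\Omega$, a consequence of the uniform weak Lebesgue bounds together with $\Lambda_g<\infty$; the Vitali convergence theorem then closes the argument and yields the desired weak solution for the original $g$.

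The principal obstacle is identifying the functional framework that simultaneously controls the value and the gradient of $u$ in a manner compatible with the scaling hypothesis on $g$; in particular, the exponent $q_*$ (and its relation $p_*/q_*$ to $p_*$) is dictated by the weak Lebesgue estimates on $\nabla\BBG_\mu$ and $\nabla\BBK_\mu$ near $\Sigma$. Once those estimates from Section \ref{weak-lebesgue} are in hand, the Schauder verifications and the equi-integrability needed for Vitali are technically involved but conceptually routine; the genuinely delicate point is the simultaneous treatment of the boundary singularity along $\Sigma$, the weight $\ei$, and the gradient dependence in the nonlinearity.
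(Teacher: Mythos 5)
Your proposal is correct and follows essentially the same strategy as the paper: Schauder's fixed point theorem applied first to a bounded (continuous) nonlinearity in a ball cut out by weighted weak-Lebesgue constraints on $u$ and $\nabla u$, with invariance of the ball secured via the weak-Lebesgue mapping properties of $\BBG_\mu$, $\nabla\BBG_\mu$, $\BBK_\mu$, $\nabla\BBK_\mu$ from Section \ref{weak-lebesgue}; then passage to the general $g$ by approximation, uniform integrability from the Marcinkiewicz bounds and $\Lambda_g<\infty$, and Vitali's convergence theorem.

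Two minor points of difference, neither of which is a gap. First, you fix $u$ directly via $T(u)=\BBG_\mu[g(u,|\nabla u|)]+\varrho\BBK_\mu[\nu]$, while the paper fixes $w=u-\varrho\BBK_\mu[\nu]$ and solves $w=\BBA[w]$; this is just a change of variables. Second, and more substantively, your claimed invariance estimate $\|T(u)\|_{L^{p_*,\infty}(\ei)}+\|\nabla T(u)\|_{L^{q_*,\infty}(\ei)}\leq C_1\varrho+C_2(M^p+M^q)\Lambda_g$ is in fact obtainable by using the scaling hypothesis with $a=b=M$ to rescale $u\mapsto u/M$, after which Lemma \ref{UniformIntegrability} with $C_u=C_v=1$ gives a constant bound for $\|g(u/M,|\nabla u|/M)\|_{L^1(\ei)}$; this avoids the auxiliary exponent $\kappa\in(1,\min\{p,p_*,q,q_*\})$ and the extra $L^\kappa$ norms that the paper tracks in its constraint set $\CO$. (The coefficient is not literally $\Lambda_g$ but a constant depending on $\Lambda_g$, $g(1,1)$ and $\|\ei\|_{L^1}$.) Finally, the paper simply truncates $g_n:=\min(n,g)$, which already yields continuous bounded approximants since $g$ is assumed continuous; your proposal of smoothing in addition is harmless but unnecessary, and requires a little extra care to preserve monotonicity and the scaling hypothesis uniformly in $n$.
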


Finally, in Section \ref{supercritical-source}, we focus on the specific case $g(u,|\nabla u|) = |u|^p |\nabla u|^q$, and rely on alternative methods to extend the results of Section \ref{subctitical-problem} in a range of exponents beyond the subcritical. Although the Schauder fixed point theorem is still the key component of the proofs, we mostly rely on potential-theoretic methods to demonstrate its applicability, in particular we specialise and extensively utilise the abstract setting of integral equations developed by Kalton and Verbitsky \cite{KV}. In these results the existence of a solution is demonstrated mostly under the assumption that the boundary measure $\nu$ is absolutely continuous with respect to some appropriate capacity (involving either a suitable kernel $\CN_{\xa,\xs}$ defined in Section \ref{supercritical-source} or the Bessel kernel), or under other equivalent conditions.

In this direction, in the subcritical case, we show that the critical exponents depend on concentration of $\xn.$ 

\begin{theorem}\label{powersubcritical}
Let $\mu<H^2$, $p,q\geq 0$ such that $p+q>1$ and $(p+q-1)\xa_-<p+1$, and let $\nu\in\GTM_+(\partial\Omega)$ be such that $\| \nu \|_{\GTM(\partial\Omega)}=1$. Assume that either of the following conditions holds:
\begin{enumerate}
\item $\supp \nu \subset\Sigma$ and $(N-\xa_-)(p+q-1)<p-1$,
\item $\supp \nu \subset\partial\Omega\setminus\Sigma$ and $N(p+q-1)<p-1$.
\end{enumerate}
The the boundary value problem 
\be\label{intro-power-BVP} \left\{ \BAL
- L_\gm u&= |u|^p|\nabla u|^q &\text{ in } \Omega\\
\tr(u)&=\vgr \nu &
\EAL \right.
\ee 
admits a non-negative weak solution provided that $\vgr$ is small enough.
\end{theorem}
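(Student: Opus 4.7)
The plan is to prove existence by applying the Schauder fixed point theorem to the operator
\[
T(v) := \vgr \BBK_\mu[\nu] + \BBG_\mu[|v|^p|\nabla v|^q],
\]
defined on a closed, convex, bounded subset of a suitable weak-Lebesgue space (with appropriate weight involving $\phi_\mu$), where $\BBG_\mu$ is the Green operator. This is the same overall strategy used for Theorem \ref{subcritical}, but the key difference is that the support assumption on $\nu$ produces sharper Marcinkiewicz estimates for $\BBK_\mu[\nu]$ and $|\nabla\BBK_\mu[\nu]|$ than those available for a general boundary measure, which is what allows exponents outside the range \eqref{subcond}.

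The first step is to refine the weak-Lebesgue bounds of Section \ref{weak-lebesgue} by taking the support of $\nu$ into account. When $\supp\nu\subset\Sigma$ the Martin kernel is governed by the boundary behavior of $L_\mu$-harmonic functions near $\Sigma$, which scales like $d_\Sigma^{-\am}$, so that $\BBK_\mu[\nu]$ and its gradient lie in weak-Lebesgue spaces whose exponents are controlled by $N-\am$ rather than $N$. When $\supp\nu\subset\partial\Omega\setminus\Sigma$, the Hardy singularity does not affect the local behavior of the kernel, so one recovers the classical Marcinkiewicz exponents depending only on $N$. In each case the exponents $s_1,s_2$ for which $\BBK_\mu[\nu]\in L^{s_1,\infty}(\Omega;\phi_\mu)$ and $|\nabla\BBK_\mu[\nu]|\in L^{s_2,\infty}(\Omega;\phi_\mu)$ are precisely those dictated by the two subcritical ranges in the statement.

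The second step is to combine these bounds with the mapping properties of $\BBG_\mu$ established in Section \ref{weak-lebesgue}. The two restrictions $(p+q-1)\am<p+1$ and, respectively, $(N-\am)(p+q-1)<p-1$ or $N(p+q-1)<p-1$, are exactly what is needed to ensure that if $v$ lies in the ball
\[
\mathcal{O}_\lambda=\{v:\|v\|_{L^{s_1,\infty}(\Omega;\phi_\mu)}+\||\nabla v|\|_{L^{s_2,\infty}(\Omega;\phi_\mu)}\leq \lambda\},
\]
then $|v|^p|\nabla v|^q\in L^1(\Omega;\phi_\mu)$ and $\BBG_\mu[|v|^p|\nabla v|^q]$ lies in the same space with a norm controlled by a power of $\lambda$ strictly greater than one. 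Together with the linear contribution $\vgr\BBK_\mu[\nu]$, this produces an inequality of the type $\|T(v)\|\leq C\vgr+C\lambda^{p+q}$, which can be closed (choosing $\lambda$ small and then $\vgr$ smaller still) to show that $T$ sends a small ball into itself.

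The final step is to verify continuity and compactness of $T$ on $\mathcal{O}_\lambda$. Continuity is obtained by approximating the nonlinearity by bounded truncations, applying the auxiliary existence result behind Theorem \ref{subcritical} to each truncation, and passing to the limit via the Vitali convergence theorem, exactly as in the subcritical case; compactness of $T(\mathcal{O}_\lambda)$ follows from the smoothing properties of $\BBG_\mu$. The main obstacle will be executing the refined weak-Lebesgue estimates for $\BBK_\mu[\nu]$ and its gradient with the correct dependence on $\supp\nu$; once these are in place, the Schauder argument is essentially forced by the algebra of the exponents.
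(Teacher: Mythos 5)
Your plan has a genuine gap at the self-mapping step, and it is not the one you flag as the "main obstacle." The refined weak-Lebesgue estimates for $\BBK_\mu[\nu]$ and $|\nabla\BBK_\mu[\nu]|$ that you want are in fact already recorded in the paper (Theorem \ref{martin-est-p1-p2}): for $\supp\nu\subset\Sigma$ one gets exponent $p_2=\frac{N-\xa_-+1}{N-\xa_-+\gamma-1}$, and for $\supp\nu\subset\partial\Omega\setminus\Sigma$ the exponent $p_1=\frac{N+1}{N+\gamma-1}$. The obstruction is the Green term. The operators $\BBG_\mu$ and $\nabla\BBG_\mu$ map $L^1(\Omega;\phi_\mu)$ only into $L^{p_*}_w(\Omega;\phi_\mu)$ and $L^{q_*}_w(\Omega;\phi_\mu)$ with $p_*=\min\{p_1,p_2\}$ at $\gamma=0$ and $q_*=\min\{p_1,p_2\}$ at $\gamma=1$; these minima do not improve when $\nu$ is supported on $\Sigma$ or on $\partial\Omega\setminus\Sigma$, because the Green kernel carries no information about $\nu$. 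For $v$ in your ball $\mathcal{O}_\lambda$ in $L^{s_1}_w\times L^{s_2}_w$ with the improved exponents $s_1>p_*$, $s_2>q_*$, the function $|v|^p|\nabla v|^q$ is a generic element of $L^1(\Omega;\phi_\mu)$ with no localization, so $\BBG_\mu[|v|^p|\nabla v|^q]$ lands only in $L^{p_*}_w\times L^{q_*}_w$, which strictly contains $L^{s_1}_w\times L^{s_2}_w$. Hence $T(\mathcal{O}_\lambda)\not\subset\mathcal{O}_\lambda$ exactly in the regime where the theorem goes beyond Theorem \ref{subcritical}, and the Schauder argument does not close.

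The paper sidesteps this by abandoning the weak-Lebesgue ball altogether and working in the Kalton--Verbitsky framework. In Theorem \ref{Existence-capacity} the Schauder argument is run on the pointwise-dominated set $E=\{u: |u|\leq 2C_1\, d\,d_\Sigma^{-\xa_-}\BBN_{2\xa_-,1}[\vgr\nu],\ |\nabla u|\leq 2C_1\, d_\Sigma^{-\xa_-}\BBN_{2\xa_-,1}[\vgr\nu]\}$; a pointwise bound by $\BBN_{2\xa_-,1}[\vgr\nu]$ does propagate the support information into the source term, and invariance is exactly the iterated inequality \eqref{concrete-condition-3} from Lemma \ref{Concrete-conditions}. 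Theorem \ref{powersubcritical} is then obtained by verifying the equivalent integral condition \eqref{concrete-condition-2}, which reduces (after rewriting $\BBN_{2\xa_-,1}[\chi_B\nu]$ in terms of $\BBK_\mu$) to an estimate of the form $\|\BBK_{\mu,\gamma}[\chi_B\nu]\|_{L^{p+q}(\Omega;\phi_\mu)}^{p+q}\lesssim\nu(B)$ with $\gamma=q/(p+q)$. It is only here that the support-sensitive weak-Lebesgue estimates of Theorem \ref{martin-est-p1-p2} are invoked, and the two numerical hypotheses in the statement are exactly what makes this last embedding work. So the estimates you identify are the right ones, but they must be fed into the capacity/pointwise-domination framework of Theorem \ref{Existence-capacity}; a direct iteration in weak-Lebesgue balls cannot close because the Green operator does not see $\supp\nu$.
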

\noindent Note that the function $g(t,s) = t^p s^q$ satisfies \eqref{subcond} with $p_*=\frac{N-\am+1}{N-\am-1}$ and $q_*=\frac{N-\am+1}{N-\am}$ if $(N-\xa_-)(p+q-1)<p-1$ and with  $p_*=\frac{N+1}{N-1}$ and $q_*=\frac{N+1}{N}$ if $N(p+q-1)<p-1$.

In the supercritical case, we provide sufficient conditions for the existence of the solutions in terms of an appropriate Bessel capacity defined in  \eqref{besselcapacity}.

\begin{theorem}\label{intro-Bessel-Sigma}
Let $p,q\geq 0$ with $p+q>1$ and $$\xa_-<\frac{p+1}{p+q-1}$$ and let $\nu\in \GTM_+(\partial \Omega)$ with $\supp \nu \subset \Sigma$ be such that
\bal
\nu(B) \leq C \mathrm{Cap}^{\Sigma,(p+q)'}_{\gv}(B) \quad \forall B\in \CB(\Sigma),
\eal
where
\bal
\gv:= k-N+2\xa_-+\frac{N-k+p+1-(p+q+1)\xa_-}{p+q},
\eal
and assume that $0<\gv<k$ and $N-k>(p+q+1)\xa_--p-1$. Then \eqref{intro-power-BVP} possesses a non-negative weak solution provided that $\vgr\in (0,\vgr_0)$ for small enough $\vgr_0$.
\end{theorem}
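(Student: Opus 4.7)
The plan is to reformulate the boundary value problem as the nonlinear integral equation $u=\vgr\BBK_\mu[\nu]+\BBG_\mu[u^p|\nabla u|^q]$ and to solve it by Schauder's fixed point theorem on an appropriate convex set on which the right-hand side defines a compact self-map. The first ingredient is the sharp two-sided behavior of the Green and Martin kernels of $L_\mu$: combining the estimates of Barbatis--Gkikas--Tertikas \cite{BGT} with the weak Lebesgue bounds of Section \ref{weak-lebesgue}, one obtains pointwise descriptions of $\BBK_\mu[\nu](x)$, $|\nabla\BBK_\mu[\nu]|(x)$, $\BBG_\mu[f](x)$ and $|\nabla\BBG_\mu[f]|(x)$ showing the expected transverse decay $d(x)d_\Sigma(x)^{-\am}$ near $\Sigma$ and a Riesz-type factor in the tangential direction along $\Sigma$.

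The analytic heart of the argument is to convert the hypothesis $\nu(B)\le C\,\mathrm{Cap}^{\Sigma,(p+q)'}_{\gv}(B)$ into the simultaneous pointwise inequalities
\begin{equation*}
\BBG_\mu\bigl[(\BBK_\mu[\nu])^p|\nabla\BBK_\mu[\nu]|^q\bigr](x)\le C_\star \BBK_\mu[\nu](x),\qquad \bigl|\nabla\BBG_\mu\bigl[(\BBK_\mu[\nu])^p|\nabla\BBK_\mu[\nu]|^q\bigr]\bigr|(x)\le C_\star |\nabla\BBK_\mu[\nu]|(x),
\end{equation*}
with a constant $C_\star$ depending only on the capacity constant, on $p$, $q$, and on $\mu$. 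Inserting the two-sided Green--Martin bounds on the left and integrating out the $N-k$ transverse directions --- which requires exactly the hypothesis $N-k>(p+q+1)\am-p-1$ for integrability and $(p+q-1)\am<p+1$ for the residual tangential exponent to be admissible --- reduces the matter to a Wolff-type potential bound for $\nu$ against the Bessel kernel of order $\gv$ on $\Sigma$. The precise value $\gv=k-N+2\am+(N-k+p+1-(p+q+1)\am)/(p+q)$ is the one that makes this transverse computation collapse exactly to the Bessel potential, and $0<\gv<k$ places $\gv$ in the admissible Bessel range on the $k$-dimensional submanifold $\Sigma$. The Kalton--Verbitsky theorem of \cite{KV}, applied on $\Sigma$ with its induced metric, then equates the capacity hypothesis with the desired Wolff-potential bound and thereby yields the two displayed pointwise estimates.

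With these estimates in hand I would take
\begin{equation*}
E:=\bigl\{u\in L^1(\Omega;\ei):\ 0\le u\le M\vgr\BBK_\mu[\nu],\ |\nabla u|\le M\vgr|\nabla\BBK_\mu[\nu]|\ \text{a.e.}\bigr\},
\end{equation*}
and verify that $T[u]:=\vgr\BBK_\mu[\nu]+\BBG_\mu[u^p|\nabla u|^q]$ sends $E$ into itself: since $u^p|\nabla u|^q\le(M\vgr)^{p+q}(\BBK_\mu[\nu])^p|\nabla\BBK_\mu[\nu]|^q$ on $E$, the previous step gives $\BBG_\mu[u^p|\nabla u|^q]\le C_\star(M\vgr)^{p+q}\BBK_\mu[\nu]$ and similarly for its gradient, so it suffices to fix $M$ large and pick $\vgr_0$ small enough that $C_\star(M\vgr)^{p+q-1}\le (M-1)/M$. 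Continuity and compactness of $T$ on $E$ then follow, in the spirit of the proof of Theorem \ref{subcritical}, from interior $W^{1,s}$ regularity of $L_\mu$ away from $\Sigma\cup\partial\Omega$, the uniform equi-integrability of $\{u^p|\nabla u|^q:u\in E\}$ in $L^1(\Omega;\ei)$ (supplied by the weak Lebesgue estimates together with the above dominating bound), and Vitali's convergence theorem. Schauder's theorem then furnishes a fixed point, which one checks is a weak solution in the sense of the definition given in Section \ref{preliminaries}.

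The hard part is the second paragraph: identifying the exact Bessel order $\gv$ on $\Sigma$ whose capacity simultaneously controls both of the nonlinear potential inequalities above, and then executing the Kalton--Verbitsky reduction on the submanifold $\Sigma$ rather than on the ambient domain $\Omega$. The transverse integration has to be carried out carefully, since the Hardy weight $d_\Sigma^{-\am}$ inherited from the Green and Martin kernels couples nontrivially to the tangential Bessel structure along $\Sigma$; this coupling is exactly what forces the somewhat unusual algebraic shape of $\gv$ and explains why the structural restrictions $\am<(p+1)/(p+q-1)$, $0<\gv<k$ and $N-k>(p+q+1)\am-p-1$ are all needed simultaneously.
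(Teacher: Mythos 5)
Your overall fixed-point strategy and the reduction to the Kalton--Verbitsky framework via the Green/Martin kernel estimates of \cite{BGT} is the right one, and you have identified the correct value of $\gv$ and the role of each structural hypothesis. But there is a concrete gap in the Schauder set-up, and your description of the capacity reduction differs structurally from the paper.

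The gap: you build your invariant set $E$ with the constraint $|\nabla u|\le M\vgr\,|\nabla\BBK_\mu[\nu]|$ and claim the pointwise bound $\bigl|\nabla\BBG_\mu\bigl[(\BBK_\mu[\nu])^p|\nabla\BBK_\mu[\nu]|^q\bigr]\bigr|\le C_\star|\nabla\BBK_\mu[\nu]|$. This is false in general. The function $\BBK_\mu[\nu]$ is a superposition of Martin kernels with possibly cancelling gradients, so $\nabla\BBK_\mu[\nu]$ vanishes on a nonempty interior set (already for $\nu$ a sum of two Dirac masses on $\Sigma$), while the left-hand side is the gradient of the $\BBG_\mu$-potential of a \emph{nonnegative} density and has no reason to vanish at the same points. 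Thus $|\nabla\BBK_\mu[\nu]|$ is not a usable envelope for gradients, and $\BBA[E]\subset E$ fails. The paper circumvents this precisely by replacing $|\nabla\BBK_\mu[\nu]|$ with the \emph{positive-kernel} envelope $d_\Sigma^{-\xa_-}\,\BBN_{2\xa_-,1}[\vgr\nu]$, which dominates $|\nabla\BBK_\mu[\nu]|$ from above via \eqref{estgradgreen} and, being a potential of the quasi-metric kernel $\CN_{2\xa_-,1}$, propagates under the Wolff-type inequality. Your proof can be repaired by defining $E$ with this upper envelope instead of $|\nabla\BBK_\mu[\nu]|$; until then the claimed self-map does not close.

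On the route through $\Sigma$: you propose to apply the Kalton--Verbitsky theorem on $\Sigma$ directly, converting the Bessel capacity hypothesis to a Wolff-type bound on $\Sigma$ and then lifting it to the ambient pointwise inequality by transverse integration. The paper instead runs the full KV machinery \emph{upstairs} on $\overline\Omega$ with the quasi-metric kernel $\CN_{2\xa_-,1}$ and the weighted measure $d^{p+1}d_\Sigma^{-(p+q+1)\xa_-}\,dx$ (Lemma \ref{Concrete-conditions}), and then compares the $\CN$-capacity restricted to $\Sigma$ with the intrinsic Bessel capacity by means of the dual formulation \eqref{capacity-alternative} and an explicit flattening lemma that proves the norm equivalence $\norm{\BBN_{2\xa_-,1}[\lambda]}_{L^{p+q}(\Omega,d^bd_\Sigma^\theta)}\approx\norm{\BBB_{k,\gv}[\overline\lambda]}_{L^{p+q}(\R^k)}$. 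This is a norm-level comparison, not a pointwise one, and it avoids the extra work of verifying the dyadic doubling conditions on the submanifold and then lifting a $\Sigma$-pointwise Wolff inequality to an $\Omega$-pointwise one. Your plan is not obviously unworkable, but it is materially more delicate than you indicate; a pointwise Wolff inequality on $\Sigma$ does not by itself yield the $\Omega$-pointwise inequality you need without an additional argument, and you would still need to verify the KV hypotheses on $\Sigma$. If you switch to the paper's capacity-comparison route, the whole second paragraph of your outline becomes a clean change of variables plus the technical transverse-integration lemma.
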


\begin{theorem}\label{intro-Bessel-Omega}
Let $p,q\geq 0$ with $p+q>1$ and $$\xa_-<\frac{p+1}{p+q-1}$$ and let $\nu\in \GTM_+(\partial \Omega)$ with $\supp \nu \subset \Omega\setminus\Sigma$ be such that
\bal
\nu(B) \leq C \mathrm{Cap}^{\partial\Omega,(p+q)'}_{\frac{2-q}{p+q}}(B) \quad \forall B\in \CB(\partial\Omega),
\eal
where we assume that $q<2$ and $N(p+q)>p+2$. Then \eqref{intro-power-BVP} possesses a non-negative weak solution provided that $\vgr\in (0,\vgr_0)$ for small enough $\vgr_0$.
\end{theorem}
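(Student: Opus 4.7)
The plan is to reformulate \eqref{intro-power-BVP} as the fixed-point equation
\[
u = \CT[u] := \BBG_\mu\bigl[u^p|\nabla u|^q\bigr] + \vgr\,\BBK_\mu[\nu],
\]
where $\BBG_\mu$ is the Green operator and $\BBK_\mu$ the Martin operator associated with $-L_\mu$. By the representation formula implicit in the notion of weak solution, any nonnegative fixed point of $\CT$ with $u^p|\nabla u|^q\in L^1(\Omega;\ei)$ produces the desired weak solution. I would apply the Schauder theorem on a closed convex cone of a suitable weighted Sobolev space; the continuity and compactness of $\CT$ there will follow from the weak-Lebesgue estimates of Section \ref{weak-lebesgue} together with the Vitali convergence theorem, exactly as in the proof of Theorem \ref{subcritical}. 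The substantive content lies in producing \emph{a priori} bounds that make the cone invariant.

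The natural cone is
\[
E := \bigl\{ u\geq 0 :\ u \leq M\,\BBK_\mu[\nu],\ |\nabla u| \leq M\,\BBK_\mu[\nu]/d \bigr\}
\]
for a large constant $M$ to be chosen. Because $\supp \nu \subset \partial\Omega\setminus\Sigma$ sits away from the singular manifold, the sharp kernel bounds of \cite{BGT} give that $\BBK_\mu[\nu]$ is comparable to a classical Poisson-type potential on the support-adapted region, and differentiation in the interior variable contributes the expected extra $d^{-1}$ factor. Hence for $u\in E$ one has the pointwise domination
\[
u^p|\nabla u|^q \leq M^{p+q}\,\BBK_\mu[\nu]^{p+q}\, d^{-q},
\]
so that stability of $E$ under $\CT$ reduces to the twin potential inequalities
\[
\BBG_\mu\!\bigl[\BBK_\mu[\nu]^{p+q}\, d^{-q}\bigr] \leq C\,\BBK_\mu[\nu], \qquad \bigl|\nabla \BBG_\mu\!\bigl[\BBK_\mu[\nu]^{p+q}\, d^{-q}\bigr]\bigr| \leq C\,\BBK_\mu[\nu]/d.
\]
These are exactly the inequalities handled by the Kalton--Verbitsky machinery of \cite{KV}, as already exploited in the non-Hardy setting by \cite{BHV}.

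To put the KV criterion to use, I would apply Fubini to recast these inequalities as integral inequalities on $\partial\Omega$ involving a suitably iterated Green--Martin kernel. KV theory then reduces the validity of such inequalities to the absolute continuity of $\nu$ with respect to a capacity of integrability exponent $(p+q)'$ defined by this iterated kernel. The final step is to identify this KV capacity, up to equivalence, with the Bessel capacity $\mathrm{Cap}^{\partial\Omega,(p+q)'}_{(2-q)/(p+q)}$. Because $\supp\nu$ stays away from $\Sigma$, on the relevant region $L_\mu$ behaves essentially like $-\Delta$, so the Green and Martin kernels reduce to their classical counterparts; a scaling computation using the sharp bounds of \cite{BGT} then shows that the iterated kernel is pointwise equivalent to a Riesz/Bessel kernel of order precisely $(2-q)/(p+q)$ on $\partial\Omega$. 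The hypotheses $q<2$ and $N(p+q)>p+2$ are exactly what is needed for this order to be positive and for the resulting Bessel capacity to be nontrivial on the $(N-1)$-dimensional manifold $\partial\Omega$. Comparability of capacities induced by pointwise-equivalent kernels then closes the loop.

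The hard part will be the sharp two-sided bound on the iterated Green--Martin kernel (including its gradient variant) in Bessel form, since it is this step that both produces the correct fractional order $(2-q)/(p+q)$ and makes the KV criterion directly applicable. Once that comparability is established, the capacity hypothesis on $\nu$ yields the twin potential inequalities with a constant $C$ independent of $\vgr$, and choosing $M$ large and $\vgr_0$ so small that $\vgr_0 + C\,M^{p+q}\vgr_0^{p+q}\leq M$ ensures $\CT(E)\subset E$. The Schauder theorem applied on $E$ then produces a fixed point, hence a nonnegative weak solution of \eqref{intro-power-BVP}.
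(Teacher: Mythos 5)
Your proposal follows essentially the same route as the paper: a Schauder fixed-point argument on a cone defined by the Martin potential and its gradient, reduction of the cone's invariance to the twin Kalton--Verbitsky potential inequalities, and then identification of the KV capacity with the Bessel capacity $\mathrm{Cap}^{\partial\Omega,(p+q)'}_{(2-q)/(p+q)}$ by exploiting that $d_\Sigma\approx 1$ on a neighborhood of $\supp\nu$ (which reduces the kernel and weight to classical ones handled in \cite[Proposition 2.9]{BHV}). The paper packages the fixed-point argument and the KV equivalences into Theorem \ref{Existence-capacity} and Lemma \ref{Concrete-conditions}, which you are in effect re-deriving, so this is the same proof in slightly different organization.
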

\noindent In the case $\Sigma=\{0\}$ and $\mu=H^2$ the above result is valid provided $p,q\geq 0$ with $p+q>1$ and $N+2-(N-2)p-Nq-2\xe>0$ for some $\xe>0$ (see Theorem \ref{0-boundary-existence}).

Finally, we examine the special case $\Sigma=\{0 \}$ and $\mu=H^2$.

\begin{theorem}\label{intro-0-bvp}
Suppose that $\Sigma=\{ 0 \}$ and $\mu=H^2$. Let $p,q\geq 0$ with $p+q>1$ and let $\nu\in\GTM_+(\partial\Omega)$ be such that $\| \nu \|_{\GTM(\partial\Omega)}=1$, and assume that $$N+2-(N-2)p-Nq-2\xe>0.$$ Moreover, assume that either of the following conditions hold:
\begin{enumerate}
\item $\nu=\delta_0$ (the Dirac measure concentrated at zero) and $N+p+1-\frac{N}{2}(p+q+1)-\xe(p+q)>0$.
\item $\supp \nu \subset \partial \Omega \setminus \{ 0\}$ and $N(p+q-1)<p+1$.
\end{enumerate}
Then the boundary value problem \eqref{intro-power-BVP} possesses a non-negative weak solution provided that $\vgr$ is small enough.
\end{theorem}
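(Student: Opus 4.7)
The plan is to apply the Schauder fixed point theorem to the integral operator
\begin{equation*}
T u := \vgr \BBK_\mu[\nu] + \BBG_\mu\bigl[|u|^p|\nabla u|^q\bigr]
\end{equation*}
on a suitable closed convex subset of a weighted weak-Lebesgue space, exactly along the lines of Sections \ref{subctitical-problem} and \ref{supercritical-source}. Any fixed point is then a weak solution of \eqref{intro-power-BVP}. The novelty of the case $\Sigma=\{0\}$, $\mu=H^2$ is that the Frobenius exponents merge, $\xa_+=\xa_-=H=N/2$, and the Green and Martin kernel estimates of \cite{BGT} pick up a logarithmic factor; the small parameter $\xe>0$ in the hypotheses is exactly the slack used to absorb these logarithmic losses into quantitative weak-Lebesgue estimates.

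Case (2) reduces to Theorem \ref{subcritical}. Indeed, on $\supp\nu\subset\partial\Omega\setminus\{0\}$ the base point $y$ stays a fixed positive distance from the singular set, and consequently the Martin kernel $\BBK_\mu(\cdot,y)$ is uniformly comparable on $\supp\nu$ to the classical Poisson kernel of $-\Delta$ in $\Omega$. The weak-Lebesgue estimates of Section \ref{weak-lebesgue} then reduce to the standard ones with critical pair $(p_*,q_*)=(\tfrac{N+1}{N-1},\tfrac{N+1}{N})$, and the hypothesis $N(p+q-1)<p+1$ is precisely the condition, already noted in the remark after Theorem \ref{powersubcritical}, that makes $g(t,s)=t^p s^q$ satisfy \eqref{subcond}. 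Theorem \ref{subcritical} then delivers the desired solution.

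Case (1), with $\nu=\delta_0$, is the genuinely new case. Specialising the estimates of \cite{BGT} yields that $\BBK_\mu[\delta_0]$ and $|\nabla\BBK_\mu[\delta_0]|$ are singular of order $|x|^{-N/2}$ up to a logarithmic factor, hence they belong to weak-Lebesgue spaces with exponents degraded by an arbitrary $\xe>0$. A direct computation shows that the two algebraic inequalities $N+2-(N-2)p-Nq-2\xe>0$ and $N+p+1-\tfrac{N}{2}(p+q+1)-\xe(p+q)>0$ together secure the integrability $\BBG_\mu\bigl[|\BBK_\mu[\delta_0]|^p|\nabla\BBK_\mu[\delta_0]|^q\bigr]\in L^1(\Omega;\ei)$ with a quantitative smallness of order $\vgr^{p+q}$. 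This enables $T$-invariance on the ball $\{u : \|u-\vgr\BBK_\mu[\delta_0]\|_X\leq C\vgr^{p+q}\}$ in the appropriate weighted weak-Lebesgue space $X$ for $\vgr$ small enough. Compactness of $T$ follows in the standard way from the gain of regularity provided by the Green potential, and Schauder closes the argument.

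The main obstacle is the derivation of sharp weak-Lebesgue estimates for $\BBK_\mu[\delta_0]$, $\nabla\BBK_\mu[\delta_0]$, $\BBG_\mu$ and $\nabla\BBG_\mu$ in the degenerate critical case $\mu=H^2$: the logarithmic factors produced by the coincidence $\xa_+=\xa_-$ must be absorbed into an arbitrary polynomial loss parametrised by $\xe$ while still preserving the subcritical balance between the Martin potential and the Green operator required for the fixed-point argument to close. Once this is arranged, the remainder of the proof is essentially a verbatim adaptation of the argument already carried out in Sections \ref{subctitical-problem} and \ref{supercritical-source}.
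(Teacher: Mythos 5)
Your high-level plan (Schauder fixed point on the operator $T u = \vgr\BBK_\mu[\nu] + \BBG_\mu[|u|^p|\nabla u|^q]$) matches the paper, and you correctly identify $\xe$ as the device for trading the $\mu=H^2$ logarithm for a power loss. But both cases, as you describe them, have gaps.

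For case (1), the step that closes the argument in the paper is not the $L^1(\Omega;\ei)$-integrability of $\BBG_\mu\bigl[|\BBK_\mu[\delta_0]|^p|\nabla\BBK_\mu[\delta_0]|^q\bigr]$. The paper works with the auxiliary kernel $\CN_{N-\xe,1}$, which dominates $G_{H^2}$, $|\nabla G_{H^2}|$, $K_{H^2}$ and $|\nabla K_{H^2}|$ after stripping the weights $d\,|x|^{-N/2}$, and applies the Kalton--Verbitsky machinery (Lemma \ref{Concrete-conditions}, Theorem \ref{0-existence-power-capacity}). The condition actually verified is the pointwise/integral condition
\begin{equation*}
\int_B \BBN_{N-\xe,1}[\chi_B\nu]^{p+q}\,d^{p+1}(x)\,|x|^{-\frac{N}{2}(p+q+1)}\,dx \leq C\,\nu(B),\qquad \forall B\in\CB(\overline\Omega),
\end{equation*}
whose equivalent form $\BBN[\BBN[\nu]^{p+q}d^bd_\Sigma^\theta]\lesssim\BBN[\nu]$ is what makes the pointwise set $E=\{u:\,|u|\leq 2C_1 dd_0^{-N/2}\BBN[\vgr\nu],\,|\nabla u|\leq 2C_1 d_0^{-N/2}\BBN[\vgr\nu]\}$ invariant under $T$. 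For $\nu=\delta_0$ this is an explicit calculation using $\CN_{N-\xe,1}(x,0)=|x|^{-\xe}$, and the convergence threshold is exactly $N+p+1-\tfrac{N}{2}(p+q+1)-\xe(p+q)>0$. Your argument, by contrast, only asserts smallness in an $L^1$-norm. That gives one iterate, but without the pointwise comparison $\BBN[\BBN[\nu]^{p+q}\,\omega]\lesssim\BBN[\nu]$ there is no reason a norm-ball $\{\|u-\vgr\BBK_\mu[\delta_0]\|_X\leq C\vgr^{p+q}\}$ is mapped into itself; this is the genuine missing step.

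For case (2), your assertion that $\BBK_\mu(\cdot,y)$ is comparable to the classical Poisson kernel when $y$ stays away from $0$ is not correct: the Martin kernel for $\mu=H^2$ retains the factor $|x|^{-N/2}$ (and a logarithm), so it is not uniformly comparable to the Poisson kernel across $\Omega$. The paper does not invoke Theorem \ref{subcritical} at all here; within the same Kalton--Verbitsky framework it observes that $\CN_{N-\xe,1}(x,\xi)\approx d^{-1}(x)|x|^{N/2}K_{\mu'}(x,\xi)$ for the strictly subcritical parameter $\mu'=\tfrac{N^2-\xe^2}{4}$ (so the power bound \eqref{Martinest1} holds with no logarithm), then applies the weak-Lebesgue estimate of Theorem \ref{martin-est-p1-p2} with $\xg=q/(p+q)$, which yields the threshold $p+q<\tfrac{N+1}{N+\xg-1}\Leftrightarrow N(p+q-1)<p+1$. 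Your arithmetic for the threshold is right, but the kernel comparison underlying it is wrong, and the route through Theorem \ref{subcritical} would additionally require justifying that the weak-Lebesgue estimates of Section \ref{weak-lebesgue} survive the $\mu=H^2$ logarithm, a point the paper deliberately sidesteps by replacing $H^2$ with $\mu'<H^2$.
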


Our results generalize those regarding the source case in Gkikas and Nguyen \cite{GN-gradient}, where the case $\Sigma=\partial \Omega$ is treated; see also the pioneering work of Bidaut-V\'eron, Hoang, V\'eron \cite{BHV} for the case $\mu=0$ (without Hardy potential). To our knowledge, this is the first instance in which critical boundary singularities in semilinear elliptic problems are treated in such generality.

\medskip
\noindent \textbf{Acknowledgement.} The research project was supported by the Hellenic Foundation for Research and Innovation
(H.F.R.I.) under the “2nd Call for H.F.R.I. Research Projects to support Post-Doctoral Researchers” (Project
Number: 59).

\section{Preliminaries}\label{preliminaries}

\subsection{The submanifold $\Sigma\subset\partial\Omega$}

We are now going to introduce some notation and tools that will be useful for our local analysis near $\Sigma$ and $\partial\xO$; see e.g. \cite{Kuf,BGT}.

Let $x =(x',x'')\in \R^N$, $x'=(x_1,..,x_{N-k}) \in \R^{N-k}$,  $x''=(x_{N-k+1},...,x_N) \in \R^{k}$. For $\beta>0$, we denote by $B^{N-k}(x',\beta)$ the ball  in $\R^{N-k}$ with center $x'$ and radius $\beta$.
For any $\xi\in \Sigma$ we also set
\[
V_\Sigma(\xi,\xb):=
\Big\{x=(x',x''): |x''-\xi''|<\beta,\; |x_i-\Gamma_{i,\Sigma}^\xi(x'')|<\xb,\;\forall i=1,...,N-k\Big\},
\]
for some functions $\Gamma_{i,\Sigma}^\xi: \R^{k} \to \R$, $i=1,...,N-k$.

Let $\Sigma_\beta := \{ x\in \Omega: d_\Sigma(x)<\beta \}$. Since $\Sigma$ is a $C^2$ compact submanifold in $\mathbb{R}^N$ without boundary,
there exists $\xb_0>0$ such that

\begin{itemize}
\item For any $x\in \Sigma_{6\beta_0}$, there is a unique $\xi \in \Sigma$  satisfying
$|x-\xi|=d_\Sigma(x)$.

\item $d_\Sigma \in C^2(\Sigma_{4\beta_0})$, $|\nabla d_\Sigma|=1$ in $\Sigma_{4\beta_0}$ and there exists $g\in L^\infty(\Sigma_{4\beta_0})$ such that
\[
\Delta d_\Sigma(x)=\frac{k-1}{d_\Sigma(x)}+g(x) , \quad \text{in } \Sigma_{4\beta_0} .
\]
(See \cite[Lemma 2.2]{Vbook} and \cite[Lemma 6.2]{DN}.)

\item For any $\xi \in \Sigma$, there exist $C^2$ functions $\Gamma_{i,\Sigma}^\xi \in C^2(\R^{k};\R)$, $i=1,...,N-k$, such that defining
\[
V_\Sigma(\xi,\xb):=
\Big\{x=(x',x''): |x''-\xi''|<\beta,\; |x_i-\Gamma_{i,\Sigma}^\xi(x'')|<\xb,\;
 i=1,...,N-k\Big\},
\]
we have (upon relabelling and reorienting the coordinate axes if necessary)
\[
V_\Sigma(\xi,\beta) \cap \Sigma=
\Big\{x=(x',x''): |x''-\xi''|<\beta,\;
x_i=\Gamma_{i,\Sigma}^\xi (x''), \;  i=1,...,N-k\Big\}.
\]

\item There exist $\xi^{j}$, $j=1,...,m_0$, ($ m_0 \in \N$) and $\beta_1 \in (0, \beta_0)$ such that
\be \label{cover}
\Sigma_{2\xb_1}\subset \bigcup_{i=1}^{m_0} V_\Sigma(\xi^i,\beta_0).
\ee
\end{itemize}

Now set
\[
\xd_\Sigma^\xi(x):=\Big(\sum_{i=1}^{N-k}|x_i-\Gamma_{i,\Sigma}^\xi(x'')|^2\Big)^{\frac{1}{2}}, \quad x=(x',x'')\in V_K(\xi,4\beta_0).
\]
Then there exists a constant $C=C(N,K)$ such that
\be\label{propdist}
d_\Sigma(x)\leq	\xd_\Sigma^{\xi}(x)\leq C \| \Sigma \|_{C^2} d_\Sigma(x),\quad \forall x\in V_\Sigma(\xi,2\beta_0),
\ee
where $\xi^j=((\xi^j)', (\xi^j)'') \in \Sigma$, $j=1,...,m_0$, are the points in \eqref{cover} and
\[
\| \Sigma \|_{C^2}:=\sup\{  \| \Gamma_{i,\Sigma}^{\xi^j} \|_{C^2(B_{5\beta_0}^{k}((\xi^j)''))}: \; i=1,...,N-k, \;j=1,...,m_0 \} < \infty.
\]
For simplicity we shall write $\xd_\Sigma$ instead of $\xd_\Sigma^{\xi}$.
Moreover, $\beta_1$ can be chosen small enough so that for any $x \in \Sigma_{\beta_1}$,
\[
B(x,\beta_1) \subset V_\Sigma(\xi,\beta_0),
\]
where $\xi \in \Sigma$ satisfies $|x-\xi|=d_\Sigma(x)$.

When $\Sigma=\partial\xO$ we assume that

\[
V_{\partial\xO}(\xi,\beta) \cap \xO=\Big\{x: \sum_{i=2}^N|x_i-\xi_i|^2<\beta^2,\;
0< x_1 -\Gamma_{1,\partial\xO}^\xi (x_2,...,x_N) <\beta \Big\}.
\]
Thus, when $x\in \Sigma\subset\partial\xO$ is a $C^2$ compact submanifold in $\mathbb{R}^N$ without boundary, of dimension $k$, $0\leq k \leq N-1,$ we have that

\be
\xG_{1,\Sigma}^\xi(x'')=\xG_{1,\partial\xO}^\xi(\xG_{2,\Sigma}^\xi(x''),...,\xG_{N-k,\Sigma}^\xi(x''),x'').\label{dist3}
\ee

Let $\xi\in \Sigma$. For any $x\in V_\Sigma(\xi,\xb_0)\cap \xO,$ we define
\[
\xd(x)=x_1-\Gamma_{1,\partial\xO}^\xi (x_2,...,x_N),
\]
and
$$\xd_{2,\Sigma}(x)=\Big(\sum_{i=2}^{N-k}|x_i-\Gamma_{i,\Sigma}^\xi(x'')|^2 \Big)^{\frac{1}{2}} .
$$
Then by \eqref{dist3}, there exists a constant $A>1$ which depends only on
$\xO $, $\Sigma$ and $\beta_0$ such that

\be
\frac{1}{A}(\xd_{2,\Sigma}(x)+\xd(x))\leq\xd_\Sigma(x)\leq A(\xd_{2,\Sigma}(x)+\xd(x)), \label{propdist2}
\ee
hence by \eqref{propdist} and \eqref{propdist2} there exists a constant $C=C(\xO,\Sigma,\xg)>1$ which depends on $k, N,\Gamma_{i,\Sigma}^\xi,\Gamma_{1,\partial\xO}^\xi,\xg $ such that

\bal
C^{-1}\xd^2(x)(\xd_{2,\Sigma}(x)+\xd(x))^\xg\leq d^2(x)d_\Sigma^\xg(x)\leq C\xd^2(x)(\xd_{2,\Sigma}(x)+\xd(x))^\xg.
\eal

\medskip

With these set, we recall the following estimate which is used several times in the paper.

\begin{lemma}[{\cite[Lemma A.1]{GkiNg_source}}] \label{lemapp:1}
Assume $\ell_1>0$, $\ell_2>0$, $\alpha_1$ and $\alpha_2$ such that $N-k+\alpha_1 + k\alpha_2 >0$. For $y \in \Omega$, put
$$
\CA(y):= \{ x \in \Omega : d_{\Sigma}(x) \leq \ell_1 \text{ and } |x-y| \leq \ell_2 d_{\Sigma}(x)^{\alpha_2} \}.
$$
Then
\bal
\int_{\CA(y) \cap \Sigma_{\beta_1}} d_{\Sigma}(x)^{\alpha_1}\dx \lesssim \ell_1^{N-k+\alpha_1 + k\alpha_2}\ell_2^k.
\eal
\end{lemma}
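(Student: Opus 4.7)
My plan is to carry out the integration in tubular coordinates around $\Sigma$ after a finite covering argument. First, using the compactness of $\Sigma$ and the covering \eqref{cover}, it suffices to bound the integral of $d_\Sigma^{\xa_1}\chr{\CA(y)}$ over each piece $\CA(y)\cap V_\Sigma(\xi^j,\xb_0)\cap \Sigma_{\xb_1}$, since there are only $m_0$ such charts. In each chart, the estimate \eqref{propdist} lets me replace $d_\Sigma(x)$ by the local comparable quantity $\xd_\Sigma(x)=\bigl(\sum_{i=1}^{N-k}|x_i-\Gamma_{i,\Sigma}^{\xi}(x'')|^2\bigr)^{1/2}$ at the cost of a multiplicative constant depending on $\xO,\Sigma$; this is convenient because $\xd_\Sigma$ is the Euclidean distance from $x'$ to the graph point $\Gamma^{\xi}(x'')$ in the $(N-k)$-dimensional normal slice.

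Now I introduce the cylindrical change of variables $x'=\Gamma^\xi(x'')+r\omega$ with $r\geq 0$ and $\omega\in S^{N-k-1}$, so that $\xd_\Sigma(x)=r$ and $dx=r^{N-k-1}\,dr\,d\omega\,dx''$. The definition of $\CA(y)$ forces $r\leq \ell_1$ and, crucially, $|x''-y''|\leq |x-y|\leq \ell_2 d_\Sigma(x)^{\xa_2}\lesssim \ell_2 r^{\xa_2}$. Dropping the remaining (only tighter) constraints, I obtain the upper bound
\bal
\int_{\CA(y)\cap V_\Sigma(\xi^j,\xb_0)\cap\Sigma_{\xb_1}} d_\Sigma^{\xa_1}\,dx
\lesssim \int_0^{\ell_1}\!\!\int_{S^{N-k-1}}\!\!\int_{|x''-y''|\lesssim \ell_2 r^{\xa_2}} r^{\xa_1+N-k-1}\,dx''\,d\omega\,dr.
\eal
The inner integral in $x''$ contributes $(\ell_2 r^{\xa_2})^k$ up to a dimensional constant, and the $\omega$-integral is just $|S^{N-k-1}|$.

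Combining these factors reduces the problem to a one-dimensional integral,
\bal
\lesssim \ell_2^k \int_0^{\ell_1} r^{\xa_1+N-k-1+k\xa_2}\,dr,
\eal
which converges exactly under the standing hypothesis $N-k+\xa_1+k\xa_2>0$ and evaluates to a constant multiple of $\ell_1^{N-k+\xa_1+k\xa_2}\ell_2^k$. Summing over the finitely many charts $V_\Sigma(\xi^j,\xb_0)$ covering $\Sigma_{2\xb_1}\supset \Sigma_{\xb_1}$ yields the desired estimate. The only delicate point is really the justification of the tubular change of variables and the equivalence between $d_\Sigma$ and $\xd_\Sigma$; both are handled by the $C^2$ structural assumption on $\Sigma$ as recorded in the paragraphs preceding the lemma, so there is no serious obstacle beyond bookkeeping the constants.
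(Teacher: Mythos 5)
Your proof is correct, and the approach is the standard one: the paper itself does not prove this lemma but cites it from \cite[Lemma A.1]{GkiNg_source}, and the intended argument there is precisely the local tubular-coordinate computation you carry out. A few small points for completeness: the change of variables $(r,\omega,x'')\mapsto(\Gamma^\xi(x'')+r\omega,\,x'')$ does have Jacobian exactly $r^{N-k-1}$ because the differential is block upper-triangular (the $x''$-block is the identity, the $x'$-block is the usual polar map), so the expression $dx = r^{N-k-1}\,dr\,d\omega\,dx''$ you wrote is literally correct rather than just approximate; the equivalence $d_\Sigma(x)\approx\delta_\Sigma^\xi(x)=r$ from \eqref{propdist} only changes the integration bounds $r\leq\ell_1$ and $|x''-y''|\lesssim\ell_2 r^{\alpha_2}$ by constants depending on $\Omega,\Sigma$ (and the direction of the inequality is preserved for any sign of $\alpha_2$, since a two-sided bound $c_1 r\leq d_\Sigma\leq c_2 r$ gives a two-sided bound on $d_\Sigma^{\alpha_2}$), which is absorbed into $\lesssim$; and the hypothesis $N-k+\alpha_1+k\alpha_2>0$ enters exactly where you say, to make $\int_0^{\ell_1} r^{N-k-1+\alpha_1+k\alpha_2}\,dr$ finite and of the right order. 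The final sum over the $m_0$ charts covering $\Sigma_{2\beta_1}\supset\Sigma_{\beta_1}$ closes the argument. No gap.
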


\subsection{First eigenvalue of $-L_\mu$}

Here we review some basic properties of the first eigenvalue and corresponding eigenfunction of $-L_\mu$; for more details, see \cite{BGT}. For $\mu\leq \big( \frac{N-k}{2} \big)^2$, it is known that
\bal
\xl_\xm:=\inf_{u\in H_0^1(\xO)\setminus \{0\}}\frac{\int_\xO|\nabla u|^2dx-\xm\int_\xO\frac{u^2}{d_\Sigma^2}dx}{\int_\xO u^2 dx} > -\infty.
\eal
We set
\bal
H:=\frac{N-k}{2},\quad \xa_\pm := H \pm \sqrt{H^2-\mu}.
\eal
If  $\xm<H^2$, then there exists a minimizer $\ei\in H_0^1(\xO)$ of \eqref{eigenvalue};
see \cite{FF} for more details. In addition, by \cite[Lemma 2.2]{MT} the eigenfunction $\ei$ satisfies
\bal
\xf_\xm(x) \approx d(x)d_\Sigma^{-\xa_-}(x)\quad \textrm{for } x\in\Omega,
\eal
provided that $\xm<C_{\xO,K}$. If, on the other hand, $\xm=H^2$ then there is no $H_0^1(\xO)$ minimizer.  However, there exists a function $\xf_{\mu}\in H^1_{loc}(\xO)$ such that $-L_{\mu} \xf_{\mu}=\xl_{\mu}\xf_{\mu}$ in $\xO$ in the sense of distributions; see \cite[Proposition A.2]{BGT} for a proof of this.

\subsection{Two-sided estimates on Green function and Martin kernel} 

In this subsection, we recall sharp two-sided estimates on the Green function $G_\xm$ and the Martin kernel $K_\xm$ associated to $-L_\mu$ in $\Omega$. Estimates on the Green function and the Martin kernel are stated in the following Propositions.

\begin{proposition}[{\cite[Proposition 5.3]{BGT}}]  Assume that $\xm \leq \frac{(N-k)^2}{4}$ and $\lambda_{\mu}>0$.
	
$(i)$ If $\am<\frac{N-k}{2}$ or $\am=\frac{N-k}{2}$ and $k\neq 0,$ then for any $x,y\in\xO$ and $x\neq y,$ there holds
\bal
	G_{\xm}(x,y)&\approx
	\min\left\{\frac{1}{|x-y|^{N-2}},\frac{d(x)d(y)}{|x-y|^N}\right\} \left(\frac{\left(d_\xS(x)+|x-y|\right)\left(d_\xS(y)+|x-y|\right)}{d_\xS(x)d_\xS(y)}\right)^{\am}.
\eal

$(ii)$ If $\am=\frac{N}{2}$ and $k=0,$ then for any $x,y\in\xO$ and $x\neq y,$ there holds
\ba \label{Greenestb}\BAL
	G_{\xm}(x,y)&\approx
\min\left\{\frac{1}{|x-y|^{N-2}},\frac{d(x)d(y)}{|x-y|^N}\right\}\left(\frac{\left(|x|+|x-y|\right)\left(|y|+|x-y|\right)}{|x||y|}\right)^{-\frac{N}{2}}\\
	&\hspace{1cm}  +\frac{d(x)d(y)}{(|x||y|)^{\frac{N}{2}}}\left|\ln\left(\min\left\{|x-y|^{-2},(d(x)d(y))^{-1}\right\}\right)\right|.
\EAL
\ea
\end{proposition}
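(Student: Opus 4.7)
The plan is to decompose $\xO\times\xO$ into regions according to the relative magnitudes of $|x-y|$, $d(x)$, $d(y)$, $d_\xS(x)$, $d_\xS(y)$, and to establish matching upper and lower bounds on each region. Locally near any $\xi\in \xS$ I would use the tubular coordinates set up in Section \ref{preliminaries}, in which $-L_\xm$ is asymptotic to the model Hardy operator $-\Delta - \xm|x'|^{-2}$ acting on the $(N-k)$-dimensional transverse variable $x'$. The two linearly independent radial solutions $|x'|^{-\am}$ and $|x'|^{-\ap}$ of this model, linked by $\am+\ap = 2H$, are what produce the exponent $\am$ in the statement: $\am$ governs the large-scale decay of the first Martin-type mode while $\ap$ corresponds to the ``removable'' singular mode filtered out by the Dirichlet condition at $\xS$.

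When both $x$ and $y$ sit away from $\xS$, the operator $-L_\xm$ is locally uniformly elliptic, the bracketed factor in part (i) is of order one, and the estimate reduces to the classical two-sided bound $G(x,y) \approx \min\bigl\{|x-y|^{2-N},\, d(x)d(y)|x-y|^{-N}\bigr\}$ for the Dirichlet Laplacian on a $C^2$ domain, which follows from a 3G inequality and boundary Harnack. When one or both points are close to $\xS$, I would construct explicit barriers of the form
\bal
U(x,y) := \min\bigl\{|x-y|^{2-N},\, d(x)d(y)|x-y|^{-N}\bigr\}\left(\frac{(d_\xS(x)+|x-y|)(d_\xS(y)+|x-y|)}{d_\xS(x)d_\xS(y)}\right)^{\am},
\eal
verify by direct computation in the tubular coordinates (using $|\nabla d_\xS|=1$ and $\Delta d_\xS = (k-1)/d_\xS + g$ near $\xS$) that $CU$ is a supersolution and $cU$ a subsolution of $-L_\xm u = \gd_y$ away from the diagonal with the correct singularity at $x=y$, and invoke the comparison principle (available since $\xl_\xm>0$). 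Sharpness of the $\am$-exponent is confirmed by testing against the ground state $\ei(x)\approx d(x)\,d_\xS(x)^{-\am}$, and the transitions between regimes are handled by a scaling argument together with a boundary Harnack principle adapted to $L_\xm$.

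Case (ii), where $\am = \ap = N/2$ and $\xS = \{0\}$, is the delicate one. Here the two fundamental radial solutions of the Hardy ODE coincide and the second independent solution acquires a logarithm, $|x|^{-N/2}\log|x|$, which is the source of the extra term in \eqref{Greenestb}. My approach would be to first compute the Green function of the model operator $-\Delta - (N/2)^2 |x|^{-2}$ on a half-space, by expansion in spherical harmonics and explicit solution of the resulting Bessel-type radial ODE, reading off the logarithmic correction from the Wronskian of the two radial fundamental solutions. The transfer back to $\xO$ then proceeds by a perturbation argument, since the discrepancy between $-L_\xm$ and the model operator consists of lower-order terms that are controllable on the compact set $\overline{\xO}$.

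The main obstacle is case (ii): the ansatz from case (i) degenerates when $\am=\ap$, and one must carefully propagate the logarithmic corrections through the barrier construction and the matching across transitional regions. The fact that the singular point $0$ lies on $\partial\xO$ makes the coupling between the Hardy singularity and the Dirichlet condition particularly subtle, and obtaining the single closed-form expression in \eqref{Greenestb} requires the full strength of the $L_\xm$-boundary Harnack estimates developed in \cite{BGT}, together with a delicate bookkeeping of the $\min\{|x-y|^{-2},(d(x)d(y))^{-1}\}$ cutoff inside the logarithm.
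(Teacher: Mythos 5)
This proposition is not proved in the paper under review; it is cited from \cite[Proposition~5.3]{BGT} and merely recalled for later use, so the comparison has to be with the proof given in \cite{BGT}. That proof takes a genuinely different route from yours: \cite{BGT} is a heat kernel paper, and the Green function bounds in its Proposition~5.3 are obtained by first establishing sharp two-sided estimates on the heat kernel $p_\mu(t,x,y)$ of $L_\mu$ (the central, and by far the longest, part of that paper) and then integrating $G_\mu(x,y)=\int_0^\infty p_\mu(t,x,y)\,\dd t$, with a careful splitting of the time axis into near-diagonal, intermediate and large-time regimes in which different pieces of the heat kernel bound dominate. In particular, the logarithmic term in case~(ii) emerges automatically from the $t$-integration of the critical heat kernel bound, not from a Wronskian computation for a radial ODE. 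Your route---direct elliptic barrier construction, comparison principle, boundary Harnack---is the classical Ancona/3G strategy and is not the one followed there.

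Beyond the difference in route, the sketch has two concrete gaps. First, the assertion that a constant multiple of your ansatz $U$ is a supersolution (and a small multiple a subsolution) of $-L_\mu u=\delta_y$ is presented as a ``direct computation,'' but it is not: $U$ is not $C^2$ across the set $\{|x-y|^2 = d(x)d(y)\}$ where the $\min$ switches branches; while $\min$ of supersolutions is fine, $\min$ of subsolutions is not in general a subsolution, so the lower barrier in particular requires an argument; and even on each branch, applying $L_\mu$ to the product of the bulk factor with $\bigl((d_\xS(x)+|x-y|)/d_\xS(x)\bigr)^{\am}$ produces cross terms of the type $\nabla d_\xS\cdot\nabla(\text{bulk})$ that do not cancel and must be shown to have the correct sign uniformly down to $\xS$ and across the transition $|x-y|\sim d_\xS(x)$. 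That verification is precisely where the work lies, and it cannot be waved away. Second, in case~(ii) the ``transfer back by a perturbation argument'' is unjustified: with $\mu=H^2$ the potential is critical, the discrepancy between $d_\xS^{-2}$ and the flat model $|x'|^{-2}$ is of the same order as the leading term near $0\in\partial\xO$, and there is no spectral margin to absorb it; this is exactly why \cite{BGT} develops dedicated boundary Harnack and heat kernel machinery for $L_\mu$ rather than perturbing from a model. Your proposal correctly identifies the origin of the exponent $\am$ and of the logarithm, but it is a plan for a proof rather than a proof, and the hard steps it defers are the substance of \cite{BGT}.
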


\begin{proposition}[{\cite[Theorem 2.8]{BGT}}] ~~
Assume that $\xm \leq \frac{(N-k)^2}{4}$ and $\lambda_{\mu}>0$.

$(i)$ If $\xm <\frac{(N-k)^2}{4}$ or $\xm=\frac{(N-k)^2}{4}$ and $k>0$ then
\be
\label{Martinest1}
K_{\xm}(x,\xi) \approx\frac{d(x)}{|x-\xi|^N}\left(\frac{\left(d_\xS(x)+|x-\xi|\right)^2}{d_\xS(x)}\right)^{\am} \quad \text{for all } x \in \Omega, \, \xi \in \partial\xO.
\ee

$(ii)$ If  $\xm=\big(\frac{N}{2}\big)^2$ and $k=0$ then
\bal
K_{\mu}(x,\xi) \approx \frac{d(x)}{|x-\xi|^N}\left(\frac{\left(|x|+|x-\xi|\right)^2}{|x|}\right)^{\frac{N}{2}}
+\frac{d(x)}{|x|^{\frac{N}{2}}}\left|\ln\left(|x-\xi|\right)\right|,\quad
\text{ for all } x \in \Omega, \, \xi \in \partial\xO.
\eal
\end{proposition}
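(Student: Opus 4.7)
The approach I would take is to derive the Martin kernel estimates directly from the Green function estimates of the preceding Proposition, using the standard characterization
\[
K_\mu(x,\xi)\;=\;\lim_{y\to\xi,\,y\in\Omega}\,\frac{G_\mu(x,y)}{G_\mu(x_0,y)},
\]
where $x_0\in\Omega$ is a fixed reference point. A preliminary requirement is to show that this limit exists, is independent of the approach sequence, and that the resulting Martin boundary is homeomorphic to $\partial\Omega$; this is essentially a boundary Harnack principle for $L_\mu$, typically obtained by combining the two-sided Green function estimates with barriers built from $d(x)d_\Sigma(x)^{-\am}$.

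For case (i), fix $x\in\Omega$ and $\xi\in\partial\Omega$ and let $y\to\xi$. Since $d(y)\to 0$ while $d(x)$ and $|x-\xi|$ remain bounded away from zero, we have $d(x)d(y)<|x-y|^2$ for $y$ sufficiently close to $\xi$, so the minimum in the Green function estimate is realised by the second term $d(x)d(y)/|x-y|^N$. Forming the ratio $G_\mu(x,y)/G_\mu(x_0,y)$, the purely $y$-dependent factors $d(y)$ and $d_\Sigma(y)^{-\am}$ cancel exactly, while the remaining $y$-dependent quantities $(d_\Sigma(y)+|x-y|)^{\am}$ and $(d_\Sigma(y)+|x_0-y|)^{\am}$ have the positive limits $(d_\Sigma(\xi)+|x-\xi|)^{\am}$ and $(d_\Sigma(\xi)+|x_0-\xi|)^{\am}$ as $y\to\xi$. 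Applying the triangle-inequality equivalence $d_\Sigma(\xi)+|x-\xi|\approx d_\Sigma(x)+|x-\xi|$ (which holds since $|d_\Sigma(x)-d_\Sigma(\xi)|\leq|x-\xi|$), the numerator becomes comparable to $d(x)|x-\xi|^{-N}(d_\Sigma(x)+|x-\xi|)^{2\am}d_\Sigma(x)^{-\am}$, while the denominator contributes a quantity bounded above and below uniformly in $\xi$. This yields \eqref{Martinest1}.

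For case (ii) with $\Sigma=\{0\}$, $k=0$ and $\mu=H^2$, the argument is analogous using \eqref{Greenestb}. The power-law term is handled exactly as in case (i) with $d_\Sigma(x)=|x|$. For the logarithmic term, as $y\to\xi$ the quantity $(d(x)d(y))^{-1}$ diverges while $|x-y|^{-2}\to|x-\xi|^{-2}$ remains bounded, so the minimum under the logarithm is realised by $|x-y|^{-2}$ and produces a factor comparable to $|\ln|x-\xi||$. The corresponding logarithmic factor at $x_0$ is bounded away from zero and infinity uniformly in $\xi$ and is absorbed in the constants, yielding the additive $|\ln|x-\xi||$ contribution in the stated estimate.

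The main obstacle I expect is justifying the existence and uniqueness of the Martin limit with respect to the approach direction, particularly at points $\xi\in\Sigma$ where the potential $\mu/d_\Sigma^2$ is singular. Sequences approaching $\xi$ along different paths could a priori produce distinct limits, so one requires a boundary Harnack principle for $L_\mu$ capable of handling singular boundary points; this in turn relies on precise local estimates for positive $L_\mu$-harmonic functions vanishing on portions of $\partial\Omega$ near $\xi$, compared against an explicit barrier of the form suggested by \eqref{Martinest1}. Such a principle is the technical heart of the argument and the source of the sharpness of the Martin kernel estimate.
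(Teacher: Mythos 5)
This proposition is not proved in the present paper: it is imported verbatim from \cite[Theorem 2.8]{BGT}, so there is no internal argument to compare against. What you have sketched is a plausible route that a proof in BGT-style would take, and the elementary parts of it are correct: once one knows that the Martin limit $K_\mu(x,\xi)=\lim_{y\to\xi}G_\mu(x,y)/G_\mu(x_0,y)$ exists and is unique for every Euclidean boundary point $\xi$, the two-sided Green estimates do pass to the quotient exactly as you describe. In particular, for $y$ near $\xi$ and $x$ fixed one has $d(x)d(y)\le|x-y|^2$ so the minimum selects the second branch, the purely $y$-dependent factors $d(y)$ and $d_\Sigma(y)^{-\alpha_-}$ cancel between numerator and denominator, and the Lipschitz bound $|d_\Sigma(x)-d_\Sigma(\xi)|\le|x-\xi|$ lets you replace $d_\Sigma(\xi)+|x-\xi|$ by $d_\Sigma(x)+|x-\xi|$ up to absolute constants; your treatment of the logarithmic term in case (ii) is likewise sound (the denominator's logarithm stabilises to $|\ln|x_0-\xi||$, which is bounded above and below uniformly in $\xi$ since $x_0$ is interior; and the ratio of the two-term sums is handled consistently because both terms in the denominator scale like $d(y)$ and both limits are positive and finite, or the logarithmic term dominates when $\xi=0$).

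The genuine gap is the one you yourself flag as ``the main obstacle'': your argument presupposes that the Martin compactification of $(\Omega,L_\mu)$ is homeomorphic to $\overline\Omega$, i.e.\ that the quotient $G_\mu(x,y)/G_\mu(x_0,y)$ has a well-defined limit independent of the approach direction at every $\xi\in\partial\Omega$, including points of $\Sigma$ where the potential is singular. Passing from a two-sided \emph{pointwise comparison} of Green functions to the \emph{existence} of the limit is not automatic: $G_\mu\approx F$ only controls the quotient up to multiplicative constants, which is not enough to conclude convergence, and a boundary Harnack inequality (uniform in the approach to $\Sigma$) is precisely the extra ingredient needed. That boundary Harnack principle for $L_\mu$ near the singular set $\Sigma$ is the technical core of \cite[Theorem 2.8]{BGT}, and your proposal defers it rather than proving it. As written, then, the proposal is a correct outline of the \emph{reduction} of the Martin kernel estimate to (a) the Green function bounds and (b) the Martin boundary identification, but it does not constitute a proof of (b), which is what the cited theorem actually supplies.
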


Recall that the Green operator and Martin operator are respectively defined by
\bal
\BBG_\mu[\tau](x)=\int_{\xO }G_{\mu}(x,y) \, \dd\tau(y), \quad \tau \in \GTM(\Omega;\ei), \\
 \mathbb{K}_\mu[\gn](x)=\int_{\partial\xO }K_{\mu}(x,y) \, \dd\xn(y), \quad \gn\in \mathfrak{M}(\partial\xO),
\eal
where $\GTM(A)$ and $\GTM_+(A)$ denote the space of Radon measures on $\partial\xO$ and its positive cone respectively for any Borel set $A\subset\BBR^N,$ as well as
\bal
\GTM(\Omega;\ei):=\bigg\{ \tau \in\GTM(\xO): \;\int_\xO\ei\,d|\xm|<\infty\bigg\}.
\eal

\subsection{The linear problem}

Let us recall the basic theory for the boundary value problem
\be\label{linear-bvp}
\left\{ \begin{array}{ll}
	-L_\mu u &= \tau \text{ in }\Omega\\
	\tr u&= \nu \\
\end{array}
\right..
\ee

\begin{definition}
Let $\tau\in\GTM(\Omega;\ei)$ and $\nu\in\GTM(\partial\Omega)$. We say that $u$ is a \textit{weak solution} of \eqref{linear-bvp} if $u\in L^1(\Omega;\ei)$ and 
\bal
-\int_\Omega uL_\mu \zeta\,dx = \int_\Omega \zeta\,d\tau - \vgr \int_\Omega \BBK_\mu[\nu]L_\mu\zeta\,dx \quad \forall \zeta \in \BBX_\mu(\Omega,\Sigma).
\eal
where
\bal
\BBX_\mu(\Omega,\Sigma):= \{ \zeta\in H^1_\loc(\Omega): \ei^{-1}\zeta\in H^1(\Omega;\ei^2) \text{ and } \ei^{-1}L_\mu\zeta \in L^\infty(\Omega) \}.
\eal
\end{definition}

This may seem somewhat technical and \textit{ad hoc}, but it is in fact inspired by the standard representation formula for such problems. In this case we have the following.

\begin{theorem}[{\cite[Theorem 2.12]{BGT}}]
Let $\tau\in\GTM(\Omega;\ei)$ and $\nu\in\GTM(\partial\Omega)$, and assume that $\lambda_\mu>0$. There exists a unique weak solution $u\in L^1(\Omega;\ei)$ of \eqref{linear-bvp}, namely
\bal
u=\BBG_\mu[\tau]+\BBK_\mu[\nu].
\eal
Furthermore there exists a positive constant $C=C(\Omega,\Sigma,\mu)$ such that
\bal
\| u \|_{L^1(\Omega;\ei)} \leq \frac{1}{\lambda_\mu} \| \tau \|_{\GTM(\Omega)}+ C \| \nu \|_{\GTM(\partial\Omega)}.
\eal
\end{theorem}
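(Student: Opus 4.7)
The plan is to take $u:=\BBG_\xm[\tau]+\BBK_\xm[\nu]$ as the candidate solution and verify three claims: (i) $u\in L^1(\Omega;\ei)$, (ii) $u$ satisfies the weak formulation, and (iii) uniqueness holds via a duality argument against suitably chosen elements of $\BBX_\xm(\Omega,\Sigma)$.

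For (i), by the symmetry of $G_\xm$ and Fubini,
\bal
\int_\Omega |\BBG_\xm[\tau]|\,\ei\,dx \le \int_\Omega\bigg(\int_\Omega G_\xm(x,y)\,\ei(x)\,dx\bigg) d|\tau|(y) = \int_\Omega \BBG_\xm[\ei]\,d|\tau| = \xl_\xm^{-1}\int_\Omega \ei\,d|\tau|,
\eal
where I use the identity $\BBG_\xm[\ei]=\xl_\xm^{-1}\ei$, itself a consequence of $-L_\xm\ei=\xl_\xm\ei$ coupled with the Dirichlet condition on $\ei$. A parallel Fubini argument reduces the integrability of $\BBK_\xm[\nu]$ against $\ei$ to proving $\sup_{\xi\in\partial\Omega}\int_\Omega K_\xm(x,\xi)\ei(x)\,dx\le C$. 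Plugging in $\ei\approx d\,d_\Sigma^{-\am}$ together with the Martin kernel bounds from Proposition (Martin estimates), and splitting $\Omega$ into $\Sigma_{\xb_1}$ and its complement, I would handle the singular region via Lemma \ref{lemapp:1} with exponents chosen according to whether $\xi\in\Sigma$ or $\xi\in\partial\Omega\setminus\Sigma$.

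For (ii), fix $\zeta\in\BBX_\xm(\Omega,\Sigma)$, so that $|L_\xm\zeta|\le C\ei$; then $u L_\xm\zeta\in L^1(\Omega)$ by Step (i), and a further application of Fubini gives
\bal
-\int_\Omega \BBG_\xm[\tau]L_\xm\zeta\,dx = \int_\Omega\bigg(-\int_\Omega G_\xm(x,y)L_\xm\zeta(x)\,dx\bigg) d\tau(y) = \int_\Omega \zeta\,d\tau,
\eal
where the inner identity encodes that $G_\xm(\cdot,y)$ is the fundamental solution of $-L_\xm$ with vanishing trace. The Martin contribution $-\int_\Omega \BBK_\xm[\nu]L_\xm\zeta\,dx$ is already present on the right-hand side of the weak formulation, so the verification is complete.

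For uniqueness, let $w=u_1-u_2\in L^1(\Omega;\ei)$ satisfy $\int_\Omega w L_\xm\zeta\,dx=0$ for every admissible $\zeta$. Given $\phi\in C_c^\infty(\Omega)$, I would test against $\zeta:=-\BBG_\xm[\ei\phi]$, which satisfies $L_\xm\zeta=\ei\phi$ so that $\ei^{-1}L_\xm\zeta=\phi\in L^\infty$; the membership $\ei^{-1}\zeta\in H^1(\Omega;\ei^2)$ follows from standard interior elliptic estimates combined with the sharp eigenfunction behaviour near $\Sigma$ and $\partial\Omega$. This produces $\int_\Omega w\,\ei\,\phi\,dx=0$ for every $\phi\in C_c^\infty(\Omega)$, hence $w=0$ a.e. because $\ei>0$ in $\Omega$. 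The stated norm inequality is finally assembled by adding the two bounds derived in Step (i). The principal obstacle is the uniform Martin-kernel moment bound, since the interplay between the submanifold $\Sigma$, the anisotropic eigenfunction profile $d\,d_\Sigma^{-\am}$, and the several regimes of $K_\xm$ (depending on whether $\xm<H^2$ or $\xm=H^2$, and on $\dim\Sigma$) requires a delicate case analysis; a secondary difficulty is certifying that $-\BBG_\xm[\ei\phi]\in\BBX_\xm$, which ties the Dirichlet regularity of Green potentials to the weighted Sobolev structure defining the test-function class.
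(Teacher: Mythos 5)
The paper does not prove this theorem---it is quoted verbatim from [BGT, Theorem~2.12]---so there is no in-paper argument to compare against. Your proposal is structurally the right approach and matches the proof one would expect in [BGT]: verify integrability by Fubini plus the eigenvalue identity $\BBG_\mu[\ei]=\lambda_\mu^{-1}\ei$, check the weak formulation by passing $L_\mu$ through the Green potential, and prove uniqueness by dualizing against $\zeta=-\BBG_\mu[\ei\phi]$.

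That said, the two steps you flag as ``the principal obstacle'' and ``a secondary difficulty'' are genuinely the substance of the theorem, and you leave both as plans. First, the uniform moment bound $\sup_{\xi\in\partial\Omega}\int_\Omega K_\mu(x,\xi)\ei(x)\,dx<\infty$ is where the constant $C(\Omega,\Sigma,\mu)$ comes from. The regime analysis you describe (whether $\xi\in\Sigma$ or $\xi\in\partial\Omega\setminus\Sigma$, whether $\mu<H^2$ or $\mu=H^2$, whether $k=0$ or $k>0$) is exactly right, but without executing at least the critical case---where the logarithmic correction in \eqref{Greenestb} has to be absorbed---the constant in the conclusion is not actually established. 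Second, certifying $\zeta=-\BBG_\mu[\ei\phi]\in\BBX_\mu(\Omega,\Sigma)$ requires showing $\ei^{-1}\BBG_\mu[\ei\phi]\in H^1(\Omega;\ei^2)$; this cannot be dispatched by ``interior elliptic estimates,'' since the whole issue is the behavior of the quotient and its gradient on approach to $\partial\Omega$ and to $\Sigma$. One needs the two-sided Green bounds and the comparability $\BBG_\mu[\ei\phi]\approx \ei$ near the boundary (for $\supp\phi$ fixed) to land inside the weighted Sobolev space. Finally, the identity $-\int_\Omega G_\mu(x,y)L_\mu\zeta(x)\,dx=\zeta(y)$ is presented as a byproduct of Fubini, but it is in fact a weighted Green's identity for the class $\BBX_\mu$, and the vanishing of boundary terms in the integration by parts near $\Sigma$ is precisely where the condition $\ei^{-1}\zeta\in H^1(\Omega;\ei^2)$ is used. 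All of these are standard but they are the content of the proof, not its scaffolding; as written the proposal is a correct outline, not a proof.
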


\subsection{Uniform integrability}

\begin{lemma} \label{UniformIntegrability}
Let $g:\R\times\R_+ \rightarrow \R_+$ be non-decreasing and locally Lipschitz in each of its variables, and assume that $g(0,0)=0$ and $$\int_1^\infty s^{-1-p}g(s,s^{p/q})\,ds < \infty$$ for some $p,q>0$. Let $u,v:\Omega\rightarrow\R$ be measurable functions, and for $s>0$ set $$E_w(s):=\{ x\in \Omega: |w(s)|>s \},\quad e_w(s):= \int_{E_w(s)}\ei\,dx,\quad w=u,v.$$ Finally, assume that there are positive constants $C_u,C_v$ such that for all $s>0$ $$e_u(s)\leq C_u s^{-p},\quad e_v(s)\leq C_v s^{-q}.$$ Then for any $s_0>0$ there holds
\bal
\| g(u,v) \|_{L^1(\Omega;\ei)} \leq \int_{E_u^c(s_0)\cap E_v^c(s_0^{p/q})} g(u,v)\ei\,dx +2p(C_u+C_v)\int_{s_0}^\infty s^{-1-p}g(s,s^{p/q})\,ds.
\eal
\end{lemma}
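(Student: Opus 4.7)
The plan is to compare $g(u,v)$ to a one-variable envelope and then reduce the estimate to a layer-cake integration against the distribution function of that envelope. Since $g$ is non-decreasing on $\R\times\R_+$ and non-negative, one has $g(u,v)\le g(|u|,|v|)$ pointwise while $e_u,e_v$ only see $|u|,|v|$, so I may assume $u,v\ge 0$. I would then introduce the one-variable envelope $\psi(s):=g(s,s^{p/q})$ and the pointwise majorant $\tau(x):=\max\{u(x),v(x)^{q/p}\}$; then $\psi$ is non-decreasing and locally Lipschitz (because $g$ is locally Lipschitz and $s\mapsto s^{p/q}$ is locally Lipschitz on $[0,\infty)$), and a short case split (if $u\ge v^{q/p}$ then $v\le \tau^{p/q}$; symmetrically in the other case) yields the key pointwise bound $g(u(x),v(x))\le g(\tau(x),\tau(x)^{p/q})=\psi(\tau(x))$.

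Noting that $\{\tau\le s_0\}=\{u\le s_0\}\cap\{v\le s_0^{p/q}\}=E_u^c(s_0)\cap E_v^c(s_0^{p/q})$, I would split $\Omega$ along the level $\tau=s_0$ and use the envelope bound above $s_0$:
\begin{equation*}
\|g(u,v)\|_{L^1(\Omega;\ei)}\le \int_{E_u^c(s_0)\cap E_v^c(s_0^{p/q})} g(u,v)\ei\,dx + \int_{\{\tau>s_0\}}\psi(\tau)\ei\,dx.
\end{equation*}
The first summand already matches the statement, so the entire task reduces to bounding $\int_{\{\tau>s_0\}}\psi(\tau)\ei\,dx$ by $2p(C_u+C_v)\int_{s_0}^\infty s^{-1-p}\psi(s)\,ds$. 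The crucial distributional input is the estimate $\int_{\{\tau>s\}}\ei\,dx\le e_u(s)+e_v(s^{p/q})\le(C_u+C_v)s^{-p}$, valid for every $s>0$ thanks to $\{\tau>s\}\subset E_u(s)\cup E_v(s^{p/q})$ and the hypotheses on $e_u,e_v$.

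For the remaining integral I would apply the layer-cake formula in the Stieltjes form
\begin{equation*}
\int_{\{\tau>s_0\}}\psi(\tau)\ei\,dx=\psi(s_0)\int_{\{\tau>s_0\}}\ei\,dx+\int_{s_0}^\infty \psi'(s)\int_{\{\tau>s\}}\ei\,dx\,ds,
\end{equation*}
which is legitimate because $\psi$ is locally absolutely continuous. Substituting the $(C_u+C_v)s^{-p}$ bound in both contributions and integrating $\int_{s_0}^\infty s^{-p}\psi'(s)\,ds$ by parts converts it into $-s_0^{-p}\psi(s_0)+p\int_{s_0}^\infty s^{-p-1}\psi(s)\,ds$. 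The two $\psi(s_0)s_0^{-p}$ boundary terms then cancel and one arrives at a bound proportional to $p(C_u+C_v)\int_{s_0}^\infty s^{-1-p}g(s,s^{p/q})\,ds$; the factor $2$ in the statement is easily absorbed, e.g.\ by not cancelling the two boundary terms or by a dyadic rewriting of the upper-level integral. The only delicate point in this step is that the boundary contribution at infinity in the integration by parts really does vanish, i.e.\ $s^{-p}\psi(s)\to 0$; this is forced by combining the monotonicity of $\psi$ with $\Lambda_g<\infty$ via the inequality $\int_s^{2s} t^{-p-1}\psi(t)\,dt\ge (1-2^{-p})p^{-1}\psi(s)s^{-p}$, whose left-hand side is a tail of $\Lambda_g$.

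The main technical obstacles are not in any single computation but in the two auxiliary verifications underlying the Stieltjes calculus: the absolute continuity of $\psi$, which is immediate from the local Lipschitz hypothesis on $g$, and the decay $s^{-p}\psi(s)\to 0$ at infinity. The latter is the only place where $\Lambda_g<\infty$ is used essentially, rather than merely to guarantee the finiteness of the right-hand side, and it is what makes the integration-by-parts argument close without any additional growth assumption on $g$.
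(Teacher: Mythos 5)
Your proof is correct, and it takes a genuinely different route from the paper's. The paper decomposes $\Omega$ into the four sets $E_u^{\pm}(s_0)\cap E_v^{\pm}(s_0^{p/q})$, keeps the doubly-good region as is, and then applies a separate distribution-function/Stieltjes argument to each of the three remaining regions, introducing the joint distribution function $e_{u,v}(s)=\int_{E_u(s)\cap E_v(s^{p/q})}\ei\,dx$ for the bad-bad corner; summing the three contributions gives $p\bigl(C_u+C_v+\min\{C_u,C_v\}\bigr)\le 2p(C_u+C_v)$. Your route instead introduces the scalar envelope $\psi(s)=g(s,s^{p/q})$ and the majorant $\tau=\max\{u,v^{q/p}\}$, proves the clean pointwise bound $g(u,v)\le\psi(\tau)$, and reduces everything to a single layer-cake computation against the superlevel sets of $\tau$. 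This has two advantages: it collapses three cases into one, and it produces the sharper constant $p(C_u+C_v)$, exposing the factor $2$ in the statement as slack. It also makes explicit the decay $s^{-p}\psi(s)\to 0$ needed to kill the boundary term at infinity in the integration by parts; the paper uses this implicitly but does not spell it out, so your argument is actually the more complete of the two. One small cosmetic point: the local Lipschitz hypothesis is invoked only to justify absolute continuity of $\psi$ for the Stieltjes-to-Lebesgue conversion; the paper's Stieltjes formulation avoids needing $\psi'$ pointwise, but since the hypothesis is in the statement, your use of it is perfectly legitimate.
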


\begin{proof}
For $s>0$ set
\bal
e_{u,v}(s):= \int_{E_u(s)\cap E_v(s^{p/q})}\ei\,dx.
\eal
Then for each $s_0>0$ there holds
\bal
\int_\Omega g(u,v)\ei\,dx &= \int_{E_u^c(s_0)\cap E_v(s_0^{p/q})} g(u,v)\ei\,dx + \int_{E_u(s_0)\cap E_v^c(s_0^{p/q})} g(u,v)\ei\,dx \\
&\hspace{1cm} +\int_{E_u(s_0)\cap E_v(s_0^{p/q})} g(u,v)\ei\,dx + \int_{E_u^c(s_0)\cap E_v^c(s_0^{p/q})} g(u,v)\ei\,dx.
\eal
Now, observe that since $e_u(s)$ is the distribution function of $u$ relative to $\ei dx$, the properties of distribution functions imply
\bal
&\int_{E_u(s_0)\cap E_v^c(s_0^{p/q})} g(u,v)\ei\,dx \\
&\hspace{1cm} \leq -\int_{s_0}^\infty g(s,s_0^{p/q})\,de_u(s) \\
&\hspace{2cm} \leq pC_u \int_{s_0}^\infty g(s,s_0^{p/q})s^{-1-p}\,ds \\
&\hspace{3cm} \leq pC_u \int_{s_0}^\infty g(s,s^{p/q})s^{-1-p}\,ds
\eal
In the same way we show that
\bal
&\int_{E_u^c(s_0)\cap E_v(s_0^{p/q})} g(u,v)\ei\,dx \\
&\hspace{1cm} \leq -\int_{s_0}^\infty g(s_0,s^{p/q})\,de_v(s^{p/q}) \\
&\hspace{2cm} \leq pC_v \int_{s_0}^\infty g(s,s^{p/q})s^{-1-p}\,ds,
\eal
and 
\bal
&\int_{E_u(s_0)\cap E_v(s_0^{p/q})} g(u,v)\ei\,dx \\
&\hspace{1cm} \leq -\int_{s_0}^\infty g(s,s^{p/q})\,de_{u,v}(s) \\
&\hspace{2cm} \leq p\min\{C_u,C_v\} \int_{s_0}^\infty g(s,s^{p/q})s^{-1-p}\,ds.
\eal
Combining all the above, we obtain the desired result.
\end{proof}

\section{Weak Lebesgue estimates for the Green and Martin operators}\label{weak-lebesgue}

\subsection{Some preliminaries}

We denote by
$L^p_w(\Omega;\tau)$, $1 \leq p < \infty$, $\tau \in \GTM^+(\Omega)$, the
weak $L^p$ space (or Marcinkiewicz space) defined as follows:
a measurable function $f$ in $\Omega$
belongs to this space if there exists a constant $c$ such that
\[
\lambda_f(a;\tau):=\tau(\{x \in \Omega: |f(x)|>a\}) \leq ca^{-p},
\forevery a>0.
\]
The function $\lambda_f$ is called the distribution function of $f$ (relative to
$\tau$). For $p \geq 1$, denote
$$ L^p_w(\Omega;\tau)=\{ f \text{ Borel measurable}:
\sup_{a>0}a^p\lambda_f(a;\tau)<\infty\}, $$
\bel{semi}
\norm{f}^*_{L^p_w(\Omega;\tau)}=(\sup_{a>0}a^p\lambda_f(a;\tau))^{\frac{1}{p}}. \ee
This is not a norm, but for $p>1$, it is
equivalent to the norm
\[
\norm{f}_{L^p_w(\Omega;\tau)}=\sup\left\{
\frac{\int_{\omega}|f|d\tau}{\tau(\omega)^{1/p'}}:\omega \sbs \Omega, \omega \text{
measurable},\, 0<\tau(\omega)<\infty \right\}.
\]
More precisely,
\bel{equinorm} \norm{f}^*_{L^p_w(\Omega;\tau)} \leq \norm{f}_{L^p_w(\Omega;\tau)}
\leq \myfrac{p}{p-1}\norm{f}^*_{L^p_w(\Omega;\tau)}. \ee
When $d\tau=\ei dx$, for simplicity, we use the notation $L_w^p(\Omega;\ei)$.  Notice that,
$$
L_w^p(\Omega;\ei) \sbs L^{r}(\Omega;\ei), \quad  \forevery r \in [1,p).
$$
From \eqref{semi} and \eqref{equinorm} follows that for any
$u \in L_w^p(\Omega;\ei)$ there holds
\ba\label{intweakleb}
\int_{\{|u| \geq s\} }\ei dx \leq s^{-p}\norm{u}^p_{L_w^p(\Omega;\ei)}.
\ea

Let us recall \cite[Lemma 2.4]{BVi} which will be useful in the sequel.
\begin{proposition}
\label{bvivier}
Let $\omega$ be a nonnegative bounded Radon measure on $\partial \Omega$ and $\eta\in C(\xO)$ be a positive weight function. Let $\CH$ be a continuous nonnegative function
on $\xO\times \partial\Omega$. For $\xl > 0$ let
$$
A_\xl(y)=\{x\in\xO :\;\; \CH(x,y)>\xl\} \; , \quad  \quad
m_{\xl}(y)=\int_{A_\xl(y)}\eta(x)dx.
$$
Let $y\in\partial\Omega$ and
suppose that there exist $C>0$ and $\tau>1$ such that
$m_{\xl}(y)\leq C\xl^{-\tau}$ for every $\lambda>0$. Then the function
$$
\BBH[\omega](x):=\int_{\partial\Omega}\CH(x,y)d\omega(y)
$$
belongs to $L^\tau_w(\Omega;\eta )$ and
$$
\|\BBH[\omega]\|_{L^\tau_w(\Omega;\eta)}\leq
(1+\frac{C\tau}{\tau-1})\omega(\partial\Omega).
$$
\end{proposition}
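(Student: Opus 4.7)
The plan is to exploit the norm formulation of the weak-$L^\tau$ space, namely
$$\|f\|_{L^\tau_w(\Omega;\eta)} = \sup\Big\{ \eta(E)^{-1/\tau'}\int_{E} |f|\,\eta\,dx \,:\, E\subset\Omega \text{ measurable}, \; 0<\eta(E)<\infty\Big\}.$$
This version is preferable to the quasi-norm $\|\cdot\|^{*}_{L^\tau_w}$ here because $\BBH[\omega]$ is itself an integral against $d\omega$, so a Fubini swap will reduce everything to a uniform-in-$y$ estimate of $\int_{E}\CH(\cdot,y)\,\eta\,dx$. Concretely, I would fix a measurable $E\subset\Omega$ with $0<\eta(E)<\infty$ and apply Tonelli's theorem to obtain
$$\int_{E}\BBH[\omega]\,\eta\,dx = \int_{\partial\Omega} I(y)\,d\omega(y), \qquad I(y):=\int_{E}\CH(x,y)\,\eta(x)\,dx,$$
so that the whole problem reduces to proving $I(y)\leq \bigl(1+\tfrac{C\tau}{\tau-1}\bigr)\eta(E)^{1/\tau'}$ uniformly in $y$.

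The key step is a level-set decomposition at the threshold $s_0:=\eta(E)^{-1/\tau}$. On the sublevel set $\{\CH(\cdot,y)\leq s_0\}$ the integrand is pointwise at most $s_0\,\eta$, producing a contribution of at most $s_0\,\eta(E)=\eta(E)^{1/\tau'}$. On the complementary set $\{\CH(\cdot,y)>s_0\}=A_{s_0}(y)$, the layer-cake identity yields
$$\int_{E\cap A_{s_0}(y)} \CH\,\eta\,dx = s_0\,\eta(E\cap A_{s_0}(y)) + \int_{s_0}^\infty \eta(E\cap A_s(y))\,ds \leq s_0\, m_{s_0}(y) + \int_{s_0}^\infty m_s(y)\,ds.$$
Inserting the hypothesis $m_s(y)\leq Cs^{-\tau}$ (and using $\tau>1$ to guarantee convergence at infinity) bounds this by $Cs_0^{1-\tau}+\tfrac{C}{\tau-1}s_0^{1-\tau}=\tfrac{C\tau}{\tau-1}\eta(E)^{1/\tau'}$. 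Summing the two contributions gives $I(y)\leq \bigl(1+\tfrac{C\tau}{\tau-1}\bigr)\eta(E)^{1/\tau'}$, and integrating this estimate against $d\omega(y)$ concludes the proof.

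The argument presents no genuine obstacle; the only delicate point is the choice of threshold. Rather than optimising $s_0$ to obtain the sharpest constant, one picks $s_0=\eta(E)^{-1/\tau}$ precisely so that both pieces of the decomposition scale as $\eta(E)^{1/\tau'}$ and recombine into the explicit constant $1+\tfrac{C\tau}{\tau-1}$ stated in the proposition. Continuity of $\CH$ ensures that the level sets $A_\lambda(y)$ are open and hence measurable, making the hypothesis on $m_\lambda(y)$ meaningful and justifying the use of Tonelli's theorem.
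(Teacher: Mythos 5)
Your proof is correct and is essentially the classical argument (the paper cites this as Lemma~2.4 of Bidaut-V\'eron--Vivier and gives no proof of its own). The key steps — passing to the norm characterization of $L^\tau_w$, reducing via Tonelli to a uniform-in-$y$ bound on $I(y)=\int_E\CH(\cdot,y)\eta\,dx$, splitting at the threshold $s_0=\eta(E)^{-1/\tau}$, and applying the layer-cake formula on the super-level set — are exactly what makes the explicit constant $1+\tfrac{C\tau}{\tau-1}$ drop out, and all the details (the bound $s_0\,m_{s_0}(y)+\int_{s_0}^\infty m_s(y)\,ds\le\tfrac{C\tau}{\tau-1}s_0^{1-\tau}$, the identification $s_0^{1-\tau}=\eta(E)^{1/\tau'}$, and the final integration against $d\omega$) check out.
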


\subsection{Weak $L^p$ estimates}

We first note that by scaling (see \cite[Lemma 3.2]{GN1}), we can easily show that there exists $C=C(\xm,N)$ such that
\ba
\label{estgradgreen}
|\nabla_x G_\xm(x,y)| &\leq C \frac{G_\xm(x,y)}{\min\{|x-y|,d(x)\}}\quad \forall x,y\in\xO,\;x\neq y\\ \nonumber
|\nabla_x K_\xm(x,\xi)| &\leq C \frac{K_\xm(x,\xi)}{d(x)}\quad \forall (x,\xi)\in\xO\times\partial\xO.
\ea
Set $$G_{\xm,\xg}(x,y)= \frac{G_\xm(x,y)}{\min\{|x-y|,d(x)\}^\xg}, \quad K_{\xm,\xg}(x,\xi)= \frac{K_{\xm}(x,\xi)}{d^\xg(x)},$$ where $\xg\in [0,1].$ In addition we set

\bal
\BBG_{\mu,\xg}[\tau](x)=\int_{\xO }G_{\mu,\xg}(x,y) \, \dd\tau(y), \quad \tau \in \GTM(\Omega;\ei), \\
 \mathbb{K}_{\mu,\xg}[\gn](x)=\int_{\partial\xO }K_{\mu,\xg}(x,y) \, \dd\xn(y), \quad \gn\in \mathfrak{M}(\partial\xO)
\eal

\begin{theorem} 
Assume $\lambda_\mu>0$, $0<\mu \leq H^2$, $0\leq\xg\leq1$ and let $$p:=\min \bigg\{ \frac{N+1}{N+\xg-1},\frac{N-\xa_-+1}{N-\xa_-+\xg-1} \bigg\}.$$
Then
\bal
	\norm{\BBG_{\mu,\xg}[\gt]}_{L_w^{p}(\Gw;\ei)} \lesssim \norm{\gt}_{\mathfrak{M}(\xO;\ei)}, \quad \forall \tau\in \mathfrak{M}(\xO;\ei).
\eal
	The implicit constant depends on $N,\Omega,\Sigma,\mu,p$.
\end{theorem}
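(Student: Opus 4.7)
The plan is to invoke a Marcinkiewicz-type criterion, adapted to the case where both the source measure and the target space are weighted by $\phi_\mu$. Writing $\BBG_{\mu,\gamma}[\tau](x) = \int_\Omega \widetilde G(x,y)\,d\widetilde\tau(y)$ with $d\widetilde\tau := \phi_\mu\,d\tau$ a bounded Radon measure on $\Omega$ and $\widetilde G(x,y) := G_{\mu,\gamma}(x,y)/\phi_\mu(y)$, a straightforward interior analogue of Proposition \ref{bvivier} (the same proof applies verbatim once one replaces $\partial\Omega$ by $\Omega$ and $\omega$ by $\widetilde\tau$) reduces the problem to showing that, uniformly in $y \in \Omega$,
\bal
m_\lambda(y) := \int_{\{x \in \Omega\,:\, \widetilde G(x,y) > \lambda\}} \phi_\mu(x)\,dx \lesssim \lambda^{-p}.
\eal

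To establish this I would use the sharp two-sided Green estimates of Proposition 5.3 in \cite{BGT} together with $\phi_\mu(x) \asymp d(x)d_\Sigma(x)^{-\alpha_-}$, and decompose $\Omega$ into regimes according to the relative sizes of $|x-y|$, $d(x)$, $d(y)$, $d_\Sigma(x)$, $d_\Sigma(y)$. In the interior regime $|x-y| \ll \min\{d(x),d_\Sigma(x),d_\Sigma(y)\}$ the kernel behaves like $|x-y|^{2-N-\gamma}$ and the level set is essentially a Euclidean ball around $y$. In the regime where $d(x)$ is the smallest of those quantities, the kernel picks up the factor $d(x)d(y)/|x-y|^{N+\gamma}$ (with $\min\{|x-y|,d(x)\} = d(x)$); in the regime where $d_\Sigma$ is smallest, the Hardy correction contributes a factor of the shape $(|x-y|/d_\Sigma)^{\alpha_-}$. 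In each such regime the level set $\{\widetilde G(\cdot,y)>\lambda\}$ is contained in a set of the form $\{x : d_\Sigma(x) \leq \ell_1,\; |x-y| \leq \ell_2 d_\Sigma(x)^{\alpha_2}\}$ or its analogue with $d(x)$ in place of $d_\Sigma(x)$, where $\ell_1,\ell_2$ are explicit powers of $\lambda$, and Lemma \ref{lemapp:1} controls the $\phi_\mu$-measure.

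The two competing exponents in the definition of $p$ arise naturally from this analysis. The exponent $(N+1)/(N+\gamma-1)$ emerges from the regime in which $d(x)$ is the smallest quantity: the kernel is governed by $d(x)^{1-\gamma}d(y)|x-y|^{-N}$, and the extra factor $d(x)$ in $\phi_\mu$ raises the effective dimension by one, giving the weighted-volume scaling $r^{N+1}$. The exponent $(N-\alpha_-+1)/(N-\alpha_-+\gamma-1)$ comes from the regime in which $d_\Sigma(x)$ is smallest: applying Lemma \ref{lemapp:1} with $\alpha_1 = 1-\alpha_-$ (and $\alpha_2$ selected so that the constraint $|x-y| \lesssim \ell_2 d_\Sigma(x)^{\alpha_2}$ captures the level set) produces the effective dimension $N-\alpha_-+1$, where the hypothesis $\mu \leq H^2$ — equivalently $\alpha_- \leq H$ — keeps $N-k+\alpha_1+k\alpha_2 > 0$. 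Taking the minimum reflects that the slower decay dominates. The degenerate case $\mu = H^2$, $k=0$ is handled by the same scheme; the logarithmic correction in \eqref{Greenestb} is harmless and can be absorbed into any slightly larger power of $\lambda$.

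The main obstacle is the case analysis itself. Because the asymptotics of $G_\mu$ change under five independent comparisons, describing $\{\widetilde G(\cdot,y)>\lambda\}$ requires splitting into several regions and, in each, verifying that Lemma \ref{lemapp:1} applies with admissible parameters; one must also confirm that every regime, regardless of whether $y$ lies near or far from $\Sigma$ and from $\partial\Omega$, yields the same $\lambda^{-p}$ control with a constant uniform in $y$. Once this bookkeeping is carried out, summing the contributions of the regions and invoking the interior version of Proposition \ref{bvivier} yields the claimed weak-type estimate.
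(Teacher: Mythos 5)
Your proposal follows essentially the same route as the paper: reduce to a uniform level-set bound $m_\lambda(y)\lesssim\lambda^{-p}$ via the interior analogue of Proposition \ref{bvivier} (applied with $\eta=\phi_\mu$ and the pushed-forward measure $\phi_\mu\,d\tau$), decompose according to whether $d_\Sigma(x)$ dominates $|x-y|$ or vice versa, and invoke Lemma \ref{lemapp:1} to control the $\phi_\mu$-weighted volume of the resulting regions near $\Sigma$. The only caveat is a minor mischaracterization of where the exponent $(N+1)/(N+\gamma-1)$ originates — in the paper it comes from the regime $d_\Sigma(x)\gtrsim|x-y|$, where the factor $d_\Sigma^{\alpha_-}(x)$ in the kernel cancels against the $d_\Sigma^{-\alpha_-}(x)$ in $\phi_\mu$ — but the overall scheme and both competing exponents are correctly identified.
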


\begin{proof}
We assume that $\xa_-<N/2$, and remark that the exceptional case $\xa_-=N/2$ is similar (as one can find an upper bound of the logarithmic term by a power term and proceed accordingly). By straightforward calculations and estimate \eqref{estgradgreen}, it follows that for all $x,y\in \xO$ such that $x\neq y$ and $|x-y| \neq d(x)$ there holds
\bal
G_{\xm,\xg} (x,y) &= \frac{G_\xm (x,y)}{(|x-y|\wedge d(x))^\xg} \\
	&\leq C |x-y|^{-N}\frac{d(x)d(y)\wedge |x-y|^2}{(d(x)\wedge |x-y|)^{\xg}} \bigg\{ \frac{(d_\xS(x)+|x-y|)(d_\xS(y)+|x-y|)}{d_\xS(x)d_\xS(y)} \bigg\}^{\xa_-}
\eal
where $C=C(\xm,N)$. It follows that
\ba\label{200}\BAL
G_{\xm,\xg} (x,y) \ei^{-1}(y) &\leq C |x-y|^{-N}\frac{d(x)d(y)\wedge |x-y|^2}{(d(x)\wedge |x-y|)^{\gamma}}(d_\Sigma(x)+|x-y|)^{2\alpha_-}d_\Sigma^{-\xa_-}(x)d^{-1}(y) \\
	&\lesssim F_1(x,y)+F_2(x,y)
\EAL,
\ea
where
\be
\label{F1green}
F_1(x,y):=|x-y|^{2-N} \bigg( \frac{1}{d(y)} \wedge \frac{d(x)}{|x-y|^2} \bigg) (d(x)\wedge |x-y|)^{-\gamma} d_\Sigma^{\xa_-}(x),
\ee
and
\be
\label{F2green}
F_2(x,y):=|x-y|^{2-N+2\xa_-} \bigg( \frac{1}{d(y)} \wedge \frac{d(x)}{|x-y|^2} \bigg) (d(x)\wedge |x-y|)^{-\gamma} d_\Sigma^{-\xa_-}(x).
\ee
Moreover, note that
\be
\label{DistQuotEst}
\frac{1}{d(y)}\wedge\frac{d(x)}{|x-y|^2} \leq \frac{4}{d(x)}\wedge \frac{d(x)}{|x-y|^2},
\ee
and our task now is to provide weak Lebesgue estimates for these terms close to $\partial \Omega$ and close to $y$. To this end, put
\bal
A_{\lambda,1}(y):= \{ x\in \Omega\setminus \{y\}: F_1(x,y)>\lambda\},\quad A_{\lambda,2}(y):= \{ x\in \Omega\setminus \{y\}: F_2(x,y)>\lambda\}.
\eal

\medskip

\paragraph{\textbf{Case 1a}} Note that if $x\in A_{\lambda,1}(y) \cap \{ d(x)<|x-y| \}$, then $$|x-y|< c \lambda^{-\frac{1}{N+\xg-1}}, \quad d_\Sigma^{-\xa_-}(x)< \lambda^{-1} |x-y|^{-N-\xg+1}$$ in view of \eqref{F1green} and \eqref{DistQuotEst}. It follows that
\bal
\int_{A_{\lambda,1}(y) \cap \{ d(x)<|x-y| \}} d(x)d_\Sigma^{-\xa_-}(x)\,dx \leq \lambda^{-1}\int_{\{|x-y|<c \lambda^{-\frac{1}{N+\xg-1}}\}} |x-y|^{2-N-\xg}\,dx \leq c\lambda^{-\frac{N+1}{N+\xg-1}}.
\eal

\medskip

\paragraph{\textbf{Case 1b}} Similarly, using the other branch of \eqref{DistQuotEst}, if $x\in A_{\lambda,1}(y) \cap \{ d(x)\geq |x-y| \}$ we have that $$|x-y|< c \lambda^{-\frac{1}{N+\xg-1}}, \quad d(x)< \frac{4}{\lambda} |x-y|^{2-N-\xg}d_\Sigma^{\xa_-}(x)$$ which implies
\bal
\int_{A_{\lambda,1}(y) \cap \{ d(x)\geq|x-y| \}} d(x)d_\Sigma^{-\xa_-}(x)\,dx \leq c\lambda^{-\frac{N+1}{N+\xg-1}}.
\eal

\medskip

\paragraph{\textbf{Case 2a}} If $x\in A_{\lambda,2}(y)\cap \{ d_\Sigma(x) <|x-y| \}$, by \eqref{F2green} and \eqref{DistQuotEst} we see that $$d_\Sigma(x)< \lambda^{-\frac{1}{N-\xa_-+\xg-1}},\quad |x-y|<\lambda^{-\frac{1}{N-2\xa_-}} d_\Sigma^{\frac{1-\xa_--\xg}{N-2\xa_-}}(x),$$ and thus, by Lemma \ref{lemapp:1}, we obtain
\bal
\int_{A_{\lambda,2}(y)\cap \{ d_\Sigma(x) <|x-y| \} \cap \Sigma_{\beta_1}}d(x)d_\Sigma^{-\xa_-}(x)\,dx \leq c \lambda^{-\frac{N-\xa_-+1}{N-\xa_-+\xg-1}}.
\eal
Finally, $A_{\lambda,2}(y)\cap \{ d_\Sigma(x) <|x-y| \} \setminus \Sigma_{\beta_1} = \varnothing$ for $\lambda>\beta_1^{-N+\xa_--\xg+1}$.

\medskip

\paragraph{\textbf{Case 2b}} If $x\in A_{\lambda,2}(y) \cap \{ d_\Sigma(x)\geq |x-y| \}$, we distinguish two further cases. First let $d(x)<|x-y|$, in which case
\bal
|x-y|<\lambda^{-\frac{1}{N-\xa_-+\xg-1}}.
\eal
It follows that
\bal
&\hspace{-1cm}\int_{A_{\lambda,2}(y) \cap \{ d_\Sigma(x)\geq |x-y| \}\cap\{d(x)<|x-y|\}} d(x)d_\Sigma^{-\xa_-}(x)\,dx\\
&\leq \int_{\{|x-y|<\lambda^{-\frac{1}{N-\xa_-+\xg-1}} \}} |x-y|^{1-\xa_-}\,dx\\
&\hspace{1cm}\leq c\lambda^{-\frac{N-\xa_-+1}{N-\xa_-+\xg-1}}.
\eal
If, on the other hand, $|x-y|\leq d(x)$, we see that
\bal
|x-y|<(4/\lambda)^{\frac{1}{N-\xa_-+\xg-1}},\quad d(x)<\frac{4}{\lambda}|x-y|^{2-N+\xa_--\xg},
\eal
so in this case
\bal
&\hspace{-1cm}\int_{A_{\lambda,2}(y) \cap \{ d_\Sigma(x)\geq |x-y| \}\cap\{d(x)\geq|x-y|\}} d(x)d_\Sigma^{-\xa_-}(x)\,dx\\
&4\lambda^{-1}\int_{\{|x-y|<(4/\lambda)^{\frac{1}{N-\xa_-+\xg-1}} \}}|x-y|^{2-N-\xg}\,dx\\
&\hspace{1cm}\leq c\lambda^{-\frac{N-\xa_-+1}{N-\xa_-+\xg-1}}
\eal
as well. 

Setting $\mathbb{F}_i[\ei \tau](x)=\int_{\xO}F_i(x,y)\ei d \tau(y)$, we apply Proposition \ref{bvivier} with {\small$\mathcal{H}(x,y)=F_i(x,y)$}, $\eta=\ei$ and $\omega=\ei\tau$ to obtain
\bal
\norm{\mathbb{F}_i[\nu]}_{L_w^{p}(\Omega;\ei)} \leq
C\norm{\tau}_{\mathfrak{M}(\xO;\ei)},\quad\text{for}\;i=1,2.
\eal
Combining the above estimate and \eqref{200}, we obtain the desired result.
\end{proof}

\begin{theorem}
\label{lpweakmartin1}
Assume that $\mu\leq H^2$ and $\lambda_{\mu}>0$, and let $p:=\min \big\{ \frac{N+1}{N+\xg-1},\frac{N-\xa_-+1}{N-\xa_-+\xg-1} \big\}$. Then there exists a positive constant $C=C(\Omega,\xS,\mu)$ such that
\[
\norm{\mathbb{K}_{\mu,\xg}[\nu]}_{L_w^{p}(\Omega;\ei)} \leq
C\norm{\nu}_{\mathfrak{M}(\partial\xO)}
\]
for any measure $\nu\in \mathfrak{M}(\partial\xO)$.
\end{theorem}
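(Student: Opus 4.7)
The plan is to apply Proposition \ref{bvivier} with $\mathcal{H}(x,\xi) = K_{\mu,\gamma}(x,\xi)$, $\eta = \phi_\mu$ and $\omega = \nu$. To this end, I must control $m_\lambda(\xi) := \int_{A_\lambda(\xi)} \phi_\mu(x)\,dx$, where $A_\lambda(\xi) := \{x \in \Omega : K_{\mu,\gamma}(x,\xi) > \lambda\}$, uniformly in $\xi \in \partial\Omega$. First I would use the sharp estimate \eqref{Martinest1} together with $\phi_\mu(x) \approx d(x) d_\Sigma(x)^{-\alpha_-}$ to write
\[
K_{\mu,\gamma}(x,\xi)\phi_\mu(x)^{-1} \lesssim \frac{d(x)^{-\gamma}}{|x-\xi|^N}\bigl(d_\Sigma(x)+|x-\xi|\bigr)^{2\alpha_-},
\]
exactly as in the Green kernel argument, and then exploit $(d_\Sigma(x)+|x-\xi|)^{2\alpha_-} \lesssim d_\Sigma(x)^{2\alpha_-} + |x-\xi|^{2\alpha_-}$ to split $K_{\mu,\gamma} \lesssim F_1 + F_2$, where after multiplying back by $\phi_\mu$ the two summands naturally produce
\[
F_1(x,\xi) := \frac{d(x)^{1-\gamma} d_\Sigma(x)^{\alpha_-}}{|x-\xi|^N}, \qquad F_2(x,\xi) := \frac{d(x)^{1-\gamma} |x-\xi|^{2\alpha_- - N}}{d_\Sigma(x)^{\alpha_-}}.
\]

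The next step would be to bound the level sets of $F_1$ and $F_2$ measured against $\phi_\mu\,dx$. Since $\xi \in \partial\Omega$ implies $d(x) \leq |x-\xi|$ and $\gamma \in [0,1]$, I may replace $d(x)^{1-\gamma}$ by $|x-\xi|^{1-\gamma}$ in the defining inequality $F_i(x,\xi) > \lambda$. For $F_1$, splitting according to whether $d_\Sigma(x) \leq |x-\xi|$ or $d_\Sigma(x) > |x-\xi|$ and integrating $d(x) d_\Sigma(x)^{-\alpha_-}$ over the corresponding sets (the first via direct spherical integration, the second via Lemma \ref{lemapp:1} restricted to $\Sigma_{\beta_1}$, noting $A_{\lambda,1}(\xi) \setminus \Sigma_{\beta_1} = \varnothing$ for $\lambda$ large) yields $m_{\lambda,1}(\xi) \leq c\lambda^{-(N+1)/(N+\gamma-1)}$. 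For $F_2$ an analogous case analysis — splitting $\{d_\Sigma(x) < |x-\xi|\}$ vs.\ $\{d_\Sigma(x) \geq |x-\xi|\}$ and, in the former, applying Lemma \ref{lemapp:1} with the constraint $|x-\xi| < \lambda^{-1/(N-2\alpha_-)} d_\Sigma(x)^{(1-\alpha_- - \gamma)/(N-2\alpha_-)}$ — gives $m_{\lambda,2}(\xi) \leq c\lambda^{-(N-\alpha_-+1)/(N-\alpha_-+\gamma-1)}$, with constants independent of $\xi$. This mirrors precisely Cases 1a, 1b, 2a, 2b of the Green function argument carried out just above.

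With the two decay rates established, Proposition \ref{bvivier} applied to each of $F_1, F_2$ (with $\omega = \nu$ and $\eta = \phi_\mu$) yields weak-$L^{p_i}$ bounds for the corresponding potentials $\mathbb{F}_i[\nu](x) := \int_{\partial\Omega} F_i(x,\xi)\,d\nu(\xi)$. Taking $p := \min\{p_1, p_2\}$ and using the continuous embedding $L^{p_i}_w(\Omega;\phi_\mu) \hookrightarrow L^p_w(\Omega;\phi_\mu)$ (valid since $\Omega$ is bounded and $\phi_\mu$ is integrable), we conclude
\[
\|\mathbb{K}_{\mu,\gamma}[\nu]\|_{L^p_w(\Omega;\phi_\mu)} \leq \|\mathbb{F}_1[\nu]\|_{L^p_w(\Omega;\phi_\mu)} + \|\mathbb{F}_2[\nu]\|_{L^p_w(\Omega;\phi_\mu)} \leq C \|\nu\|_{\mathfrak{M}(\partial\Omega)}.
\]

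The main technical obstacle, as in the Green function case, is ensuring the uniformity of the constants in $\xi$ while juggling the three regimes $\{d(x) \lessgtr |x-\xi|\}$, $\{d_\Sigma(x) \lessgtr |x-\xi|\}$, and $\Sigma_{\beta_1}$ vs.\ its complement; Lemma \ref{lemapp:1} is the crucial ingredient precisely where the singular manifold $\Sigma$ interacts with the boundary point $\xi$. The exceptional logarithmic case $\alpha_- = N/2$ with $k=0$ requires a minor extra step: absorbing the $|\ln|x-\xi||$ factor from the estimate in Proposition for $K_\mu$ into $|x-\xi|^{-\varepsilon}$ for arbitrarily small $\varepsilon > 0$ and noting that the resulting exponent still produces the claimed $p$ in the limit.
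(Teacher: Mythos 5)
Your proposal reproduces the paper's argument essentially verbatim: the same decomposition $K_{\mu,\gamma}\lesssim F_1+F_2$ obtained from the Martin-kernel estimate \eqref{Martinest1} and $\phi_\mu\approx d\,d_\Sigma^{-\alpha_-}$, the same exploitation of $d(x)\leq|x-\xi|$ on $\partial\Omega$, the same use of Lemma \ref{lemapp:1} near $\Sigma$, and the same conclusion via Proposition \ref{bvivier}. One small inaccuracy: the claim that $A_{\lambda,1}(\xi)\setminus\Sigma_{\beta_1}=\varnothing$ for large $\lambda$ holds for the $F_2$ term (where $F_2>\lambda$ forces $d_\Sigma(x)$ small), not for $F_1$; in fact the $F_1$ level set is bounded directly from $d_\Sigma^{-\alpha_-}(x)<\lambda^{-1}|x-\xi|^{1-\gamma-N}$ and $|x-\xi|\lesssim\lambda^{-1/(N-1+\gamma)}$ without any appeal to Lemma \ref{lemapp:1} or to a split on $\Sigma_{\beta_1}$, exactly as the paper does in its Case 1b.
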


\begin{proof}
 Without loss of generality, we may assume that $\nu\in \mathfrak{M}_+(\partial \Omega)$. Let $\lambda>0$ and $y\in \partial\xO$. We only consider the case  $0<\xm<\big( \frac{N}{2}\big)^2$. The other cases can be treated similarly and we omit them. Since in that case $\xa_->0$, from \eqref{Martinest1} we have that
\[
K_{\xm,\xg}(x,y) \approx \frac{d^{1-\xg}(x)d_\Sigma^{\xa_-}(x)}{|x-y|^N}+\frac{d^{1-\xg}(x)d_\Sigma^{-\xa_-}(x)}{|x-y|^{N-2\xa_-}}\quad \mbox{ in }\xO\times\partial\xO.
\]
Set
\[
F_1(x,y):=\frac{d^{1-\xg}(x)d_\Sigma^{\xa_-}(x)}{|x-y|^N} ,\quad\quad F_2(x,y):=\frac{d^{1-\xg}(x)d_\Sigma^{-\xa_-}(x)}{|x-y|^{N-2\xa_-}}.
\]

\medskip

\paragraph{\textbf{Case 1a}} Let
\begin{align*}
A_\xl(y):=\big\{x\in \xO:\;F_2(x,y)>\xl \big\}, \quad\quad
m_{\xl}(y):=\int_{A_\xl(y)}d(x)d_\Sigma^{-\xa_-}(x) dx.
\end{align*}

Note that if $x\in A_\xl(y)\cap\{d_\Sigma(x)> |x-y|\}$, by taking into account that $d(x)\leq |x-y|$ we see that $$|x-y|<\xl^{-\frac{1}{N-\xa_- +\xg-1}}$$
in that case, thus
\[
\int_{A_\xl(y)\cap\{d_\Sigma(x)> |x-y|\}}d(x)d_\Sigma^{-\xa_-}(x) dx\leq \int_{\{|x-y|<\xl^{-\frac{1}{N-\xa_-+\xg -1}}\}}|x-y|^{1-\xa_-}dx=C
\xl^{-\frac{N-\xa_-+1}{N-\xa_-+\xg -1}}.
\]

If, on the other hand, $x\in A_\xl(y)\cap\{d_\Sigma(x)\leq |x-y|\}$, taking into account that $d_\Sigma(x)\geq d(x)$, we see that
\bal
|x-y| < \lambda^{-\frac{1}{N-2\xa_-}} d_\Sigma^{\frac{1-\xg-\xa_-}{N-2\xa_-}}(x),
\eal
and since $d_\Sigma(x)\leq |x-y|$, in that case we have
\bal
d_\Sigma(x) \leq \lambda^{-\frac{1}{N-\xa_-+\xg-1}}.
\eal
Taking into account Lemma \ref{lemapp:1}, it follows that
\bal
 \int_{A_\xl(y)\cap\{d_\Sigma(x)\leq |x-y|\}\cap \Sigma_{\beta_1}}d(x)d_\Sigma^{-\xa_-}(x) dx \leq C\lambda^{-\frac{N-\xa_-+1}{N-\xa_-+\xg-1}}.
\eal

Now let $x\in \xO\setminus \Sigma_{\xb_1}$. Since $d_\Sigma(x)>\xb_1$ in that case, it follows that $A_\xl(y)\cap\{d_\Sigma(x)\leq |x-y|\}\setminus B = \varnothing$ provided that $\lambda$ is large enough. 

Hence, combining these estimates  we obtain the final estimate
\bal
m_{\xl}(y) \leq C \xl^{-\frac{N-\xa_-+1}{N-\xa_-+\xg-1}}.
\eal
Setting $\mathbb{F}_2[\xn](x)=\int_{\partial\xO}F_2(x,y)d\xn(y)$, we apply Proposition \ref{bvivier} with $\mathcal{H}(x,y)=F_2(x,y)$, $\eta=\ei$ and  $\omega=\xn$ to obtain
\be\label{F2weak estimate}
\norm{\mathbb{F}_2[\nu]}_{L_w^{\frac{N-\xa_-+1}{N-\xa_-+\gamma-1}}(\Omega;\ei)} \leq
C\norm{\nu}_{\mathfrak{M}(\partial\xO)}.
\ee

\medskip

\paragraph{\textbf{Case 1b}} Now let
\begin{align*}
\tilde A_\xl(y):=\big\{x\in \xO:\;F_1(x,y)>\xl \big\}, \quad\quad
\tilde m_{\xl}(y):=\int_{A_\xl(y)}d(x)d_\Sigma^{-\xa_-}(x) dx.
\end{align*}
If $x\in \tilde A_\lambda(y)$, it is immediate that $d_\Sigma^{\xa_-}(x)>\lambda |x-y|^{N-1+\xg}$ in view of $d(x)\leq |x-y|$, by which we additionally derive
\bal
|x-y|<C\lambda^{-\frac{1}{N-1+\xg}} \quad \textrm{and}\quad d_\Sigma^{-\xa_-}(x)<\lambda^{-1}|x-y|^{-N+1-\xg},
\eal
the first one owing to $d_\Sigma(x)\leq \mathrm{diam}(\Omega)$ and $\xa_->0$. It follows that
\begin{align*}
\tilde m_\lambda(y) &\leq \int_{\{|x-y|<C\lambda^{-\frac{1}{N-1+\xg}}\}}\xl^{-1}|x-y|^{2-N-\xg}dx\\
	&\leq C \lambda^{-\frac{N+1}{N-1+\xg}},
\end{align*}
and thus, setting $\mathbb{F}_1[\xn](x)=\int_{\partial\xO}F_1(x,y)d\xn(y)$ we obtain, in view of Proposition \ref{bvivier},
\be
 \label{F1weak estimate}
\norm{\mathbb{F}_1[\nu]}_{L_w^{\frac{N+1}{N-1+\xg}}(\Omega;\ei)} \leq C\norm{\nu}_{\mathfrak{M}(\partial\xO)}.
\ee
Estimates \eqref{F2weak estimate} and \eqref{F1weak estimate} imply the conclusion.

\medskip

This completes the proof.
\end{proof}

\begin{theorem}\label{martin-est-p1-p2} ~~
Let $\lambda_\mu>0$ and $p_1:=\frac{N+1}{N+\xg-1}$ and $p_2:=\frac{N-\xa_-+1}{N-\xa_-+\xg-1}$. Then the following statements hold:
\begin{enumerate}
\item Assume $\mu \leq \left( \frac{N}{2}\right)^2$ and $\gn\in \mathfrak{M}(\partial\xO)$ with compact support in $\partial\xO\setminus\xS.$ Then there exists a positive constant $C=C(\xO,\xS,\xm,\supp \xn,\xg)$ such that
\bal
	\norm{\mathbb{K}_{\mu,\xg}[\nu]}_{L_w^{p_1}(\Gw;\ei)} \leq C \|\nu\|_{\mathfrak{M}(\partial\Omega )}.
\eal
	
\item Assume that $\mu < \left( \frac{N}{2} \right)^2$ and $\gn\in \mathfrak{M}(\partial\xO)$ with compact support in $\Sigma$. Then there exists a positive constant $C=C(N,\Omega,\Sigma,\mu,\xg)$ such that
\bal
\norm{\mathbb{K}_{\mu,\xg}[\nu]}_{L_w^{p_2}(\Gw;\ei)} \lesssim \norm{\nu}_{\mathfrak{M}(\partial \Omega )}.
\eal

\item Assume that $\mu = \left( \frac{N}{2} \right)^2$. For any $0<\xe<p_2-1,$ there exists a positive constant $C=C(N,\Omega,\Sigma,N,\xe)$ such that
\bal
\norm{\mathbb{K}_{\mu,\xg}[\xd_0]}_{L^{p_2-\xe}(\Gw;\ei)} \lesssim 1.
\eal
\end{enumerate}
\end{theorem}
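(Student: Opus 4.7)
The overall strategy for parts (1) and (2) is to reuse the decomposition $K_{\mu,\gamma}(x,\xi) \lesssim F_1(x,\xi) + F_2(x,\xi)$ employed in the proof of Theorem \ref{lpweakmartin1}, where
\[
F_1(x,\xi) = \frac{d^{1-\gamma}(x) d_\Sigma^{\xa_-}(x)}{|x-\xi|^N}, \qquad F_2(x,\xi) = \frac{d^{1-\gamma}(x) d_\Sigma^{-\xa_-}(x)}{|x-\xi|^{N-2\xa_-}},
\]
combined with the distribution-function framework of Proposition \ref{bvivier}. The improvement over Theorem \ref{lpweakmartin1} in each part comes from using the support restriction on $\nu$ to rule out the worse of the two auxiliary exponents.

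For part (1), I would set $c_0 := \dist(\supp \nu, \Sigma) > 0$ and split $\Omega$ into the tube $\{d_\Sigma \leq c_0/2\}$ and its complement. In the tube, the triangle inequality yields $|x-\xi| \geq c_0/2$ for every $\xi \in \supp \nu$, so $F_1$ and $F_2$ are both dominated by $C(c_0)\, d^{1-\gamma}(x) d_\Sigma^{\pm \xa_-}(x)$; the $\ei$-measure of their super-level sets is then controlled by $\lambda^{-p_1}$ via explicit volume estimates based on Lemma \ref{lemapp:1} for tubes around $\Sigma$. Off the tube, the factors $d_\Sigma^{\pm \xa_-}(x)$ are bounded, so $F_1$ and $F_2$ reduce to model kernels of the form $d^{1-\gamma}(x)/|x-\xi|^M$ with $M = N$ and $M = N - 2\xa_-$, and a direct repetition of the $F_1$-case computation of Theorem \ref{lpweakmartin1} delivers the weak $L^{p_1}$ estimate for each. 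Proposition \ref{bvivier} then produces the conclusion.

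For part (2), the hypothesis $\supp \nu \subset \Sigma$ forces $|x-\xi| \geq d_\Sigma(x)$ for every $\xi \in \supp \nu$, so in the non-trivial case $\xa_- > 0$ one has
\[
\frac{F_1(x,\xi)}{F_2(x,\xi)} = \Big(\frac{d_\Sigma(x)}{|x-\xi|}\Big)^{2\xa_-} \leq 1,
\]
hence $K_{\mu,\gamma}(x,\xi) \lesssim F_2(x,\xi)$, and the weak $L^{p_2}$ estimate for $F_2$ already established in Theorem \ref{lpweakmartin1} applies directly via Proposition \ref{bvivier}; in the complementary case $\xa_- \leq 0$ one has $p_2 \leq p_1$ and the conclusion is already contained in Theorem \ref{lpweakmartin1}. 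For part (3), with $\mu = (N/2)^2$, $\Sigma = \{0\}$ (so $k=0$ and $\xa_- = N/2$) and $\nu = \delta_0$, the logarithmic Martin estimate reduces to
\[
K_{\mu,\gamma}(x,0) \lesssim \frac{d^{1-\gamma}(x)(1 + |\ln|x||)}{|x|^{N/2}},
\]
and since $\ei(x) \approx d(x)|x|^{-N/2}$ I would compute the $L^{p_2-\xe}$ norm directly. Passing to polar coordinates $x = r\omega$ near the origin, with $d(x) \approx r\omega_1$ on the local half-space-like portion of $\Omega$, the integral decouples into an angular integral and a radial integral of the form $\int_0^{r_0} r^a (1 + |\ln r|)^{p_2-\xe}\,dr$ with $a$ depending linearly on $p_2 - \xe$, and a short computation shows $a > -1$ precisely when $p_2 - \xe < p_2$, so any $\xe > 0$ delivers convergence; the far-field contribution from $\Omega \setminus B(0, r_0)$ is finite since the kernel is bounded there.

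The main obstacle I anticipate is the critical nature of part (3): at $\xe = 0$ the radial exponent is exactly $-1$, so the logarithmic factor from the Martin kernel would force a divergence, and one must verify carefully that any strict opening $\xe > 0$ beats the $(1+|\ln r|)^{p_2}$ correction. For parts (1) and (2) the subtlety is the case analysis according to the sign of $\xa_-$ and to the relative sizes of $d(x)$, $d_\Sigma(x)$ and $|x-\xi|$ inside each super-level set, but the structure mirrors that of Theorem \ref{lpweakmartin1} closely.
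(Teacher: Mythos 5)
Your proof is correct and executes precisely the strategy the paper invokes: the paper's own proof of Theorem \ref{martin-est-p1-p2} is just a reference to Theorem B.2 of \cite{BGT} together with the remark that the argument is ``similar to that of Theorem \ref{lpweakmartin1},'' and you carry out exactly that program, reusing the decomposition $K_{\mu,\gamma}\lesssim F_1+F_2$ and Proposition \ref{bvivier}, with the support hypothesis in each part used to eliminate the unfavourable exponent. The one caveat is that part (1) also allows $k=0$, $\mu=(N/2)^2$, where the Martin kernel carries a logarithmic correction and the bound $K_{\mu,\gamma}\lesssim F_1+F_2$ is not literally available; but since $|x-\xi|\geq c_0/2$ in the tube the logarithm is bounded there and off the tube $|x|$ is bounded below, so the argument patches with no new idea — and the paper's proof of Theorem \ref{lpweakmartin1} omits exactly this borderline case as well.
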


\begin{proof}
This is an extension of \cite[Theorem B.2]{BGT}, where the case $\xg=0$ is treated. The proof is similar to that of Theorem \ref{lpweakmartin1}; see also \cite[Theorem 3.11]{GkiNg_source} for a detailed proof of the case $\Sigma\subset\Omega$.
\end{proof}

\section{Nonlinear equations with subcritical source} \label{subctitical-problem}

For convenience, we use special notation for the critical exponents
\bal
p_*:= \min \bigg\{ \frac{N+1}{N-1},\frac{N-\xa_-+1}{N-\xa_--1} \bigg\},\quad q_*:= \min \bigg\{ \frac{N+1}{N},\frac{N-\xa_-+1}{N-\xa_-} \bigg\}
\eal
corresponding to $\gamma=0$ and $\gamma=1$ respectively, to be used in the sequel.

In whatever follows, we assume that $\lambda_\mu>0$ and that $\mu\leq H^2$. Moreover, $g:\BBR\times\BBR_+\to\BBR_+$ is assumed to be continuous and non-decreasing in its arguments with $g(0,0)=0$. First we provide an auxiliary existence result for continuous and bounded $g$.

\begin{lemma}\label{SubcriticalLemma}
Let $\nu\in \GTM_+(\partial\Omega)$ with $\|\nu\|_{\GTM(\partial\Omega)}=1$ and $g\in C(\R\times\R_+)\cap L^\infty(\R\times\R_+)$ be such that
\bal
\Lambda_g:= \int_1^\infty g(s,s^{p_*/q_*})s^{-1-p_*}\,ds < \infty \qquad \text{and} \qquad g(as,bt)\leq c(a^p+b^q)g(s,t)
\eal
for some $p,q>1$, $c>0$ and all $a,b,s,t\in \R_+$. Then there exists a positive $\vgr_0=\vgr(M,\mu,\Omega,\xL_g,c,p,q)$ such that the problem
\be\label{aux1}
\left\{ \begin{array}{ll}
	-L_\mu w &= g(w+\vgr \BBK_\mu[\nu],|\nabla(w+\vgr \BBK_\mu[\nu])|) \text{ in }\Omega\\
	\tr(w)&=0\\
\end{array}
\right.
\ee
possesses a positive weak solution for all $\vgr\in (0,\vgr_0)$, which satisfies
\be\label{aux1-estimate}
\|w\|_{L^{p_*}_w(\Omega;\ei)} + \|\nabla w\|_{L^{q_*}_w(\Omega;\ei)} \leq t_0
\ee
for some positive $t_0=t_0(N,\mu,\Omega,\Lambda_g,c,p,q)$.
\end{lemma}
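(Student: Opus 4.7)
The plan is to apply the Schauder fixed point theorem to the operator
\begin{equation*}
T(v) := \BBG_\mu\bigl[g\bigl(v + \vgr \BBK_\mu[\nu],\, |\nabla(v + \vgr \BBK_\mu[\nu])|\bigr)\bigr]
\end{equation*}
on the closed convex set
\begin{equation*}
\CO := \bigl\{ v \in L^1(\Omega;\ei) : v \geq 0,\ \|v\|_{L^{p_*}_w(\Omega;\ei)} \leq t_0,\ \|\nabla v\|_{L^{q_*}_w(\Omega;\ei)} \leq t_0 \bigr\},
\end{equation*}
with $t_0$ and $\vgr_0$ to be calibrated; a fixed point $w = T(w) \in \CO$ is then the desired weak solution by the linear representation formula, and satisfies \eqref{aux1-estimate} by construction.

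The main step is the invariance $T(\CO) \subset \CO$. For $v \in \CO$ write $U := v + \vgr \BBK_\mu[\nu]$ and $V := |\nabla U|$. The quasi-triangle inequality on $L^{p_*}_w(\Omega;\ei)$ and the Martin kernel estimates of Theorem \ref{lpweakmartin1} yield
\begin{equation*}
\|U\|_{L^{p_*}_w(\Omega;\ei)} \leq t_0 + C_1\vgr, \qquad \|V\|_{L^{q_*}_w(\Omega;\ei)} \leq t_0 + C_2\vgr.
\end{equation*}
To bound $\|g(U,V)\|_{L^1(\Omega;\ei)}$ by a quantity independent of $\|g\|_\infty$, I would first rescale by setting $\tilde U := U/C_U^{1/p_*}$ and $\tilde V := V/C_V^{1/q_*}$, where $C_U$ and $C_V$ are the $p_*$-th and $q_*$-th powers of the respective weak Lebesgue norms of $U$ and $V$, so that the distribution functions of $\tilde U$ and $\tilde V$ are bounded by $s^{-p_*}$ and $s^{-q_*}$. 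Applying Lemma \ref{UniformIntegrability} with $(p,q) = (p_*, q_*)$ and $s_0 = 1$ then gives
\begin{equation*}
\|g(\tilde U, \tilde V)\|_{L^1(\Omega;\ei)} \leq g(1,1)|\Omega|_\ei + 4 p_* \Lambda_g,
\end{equation*}
and since $g(1,1) \lesssim \Lambda_g$ (by monotonicity of $g$ applied inside the integral defining $\Lambda_g$) this right-hand side is a constant multiple of $\Lambda_g$. Undoing the rescaling through the structural inequality $g(as,bt) \leq c(a^p+b^q)g(s,t)$ produces
\begin{equation*}
\|g(U,V)\|_{L^1(\Omega;\ei)} \leq c(C_U^{p/p_*} + C_V^{q/q_*})\|g(\tilde U,\tilde V)\|_{L^1(\Omega;\ei)} \lesssim \bigl((t_0+C_1\vgr)^p + (t_0+C_2\vgr)^q\bigr)\Lambda_g,
\end{equation*}
and combining with the Green operator weak Lebesgue estimates of Section \ref{weak-lebesgue} applied with $\xg = 0$ and $\xg = 1$ yields
\begin{equation*}
\|T(v)\|_{L^{p_*}_w(\Omega;\ei)} + \|\nabla T(v)\|_{L^{q_*}_w(\Omega;\ei)} \leq C_3\bigl((t_0+C_1\vgr)^p + (t_0+C_2\vgr)^q\bigr)\Lambda_g.
\end{equation*}
Since $p, q > 1$, choosing $t_0$ small in terms of $(N,\mu,\Omega,\Lambda_g,c,p,q)$ and then $\vgr_0$ small in terms of $(M,\mu,\Omega,\Lambda_g,c,p,q)$ forces this bound to be at most $t_0$.

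For the remaining Schauder hypotheses, $\CO$ is closed in $L^1(\Omega;\ei)$ by lower semi-continuity of distribution functions under a.e.\ convergence, and the boundedness of $g$ renders $g(U,V)$ uniformly bounded in $L^\infty(\Omega)$, so $T(\CO)$ is precompact in $L^1(\Omega;\ei)$ by standard elliptic regularity together with Rellich--Kondrachov, while the continuity of $T$ in this topology follows from dominated convergence on a pointwise-convergent subsequence. The principal obstacle is the invariance step: the rescaling before applying Lemma \ref{UniformIntegrability}, combined with the structural inequality, is precisely what decouples $t_0$ from $\|g\|_\infty$ so that it depends only on the parameters listed in the statement.
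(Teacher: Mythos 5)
Your rescaling argument for the invariance step is correct and is genuinely different from the paper's. The paper's proof of this lemma estimates $\|g(U,V)\|_{L^1(\Omega;\ei)}$ by splitting $\Omega$ into the four regions $\{|U|\lessgtr 1\}\cap\{|V|\lessgtr 1\}$, treats the three regions touching $\{|U|>1\}\cup\{|V|>1\}$ by the Stieltjes argument against the distribution functions (as you do implicitly through Lemma~\ref{UniformIntegrability}), and then handles the remaining region $\{|U|\le1,|V|\le1\}$ by the structural inequality together with auxiliary strong $L^\kappa$ norms, $\kappa\in(1,\min\{p,q,p_*,q_*\})$; this is why the paper's contraction functional $A$ carries the extra terms $A_3,A_4=\|\cdot\|_{L^\kappa}$ and the final bound reads $C(t^{p_*}+t^{q_*}+2t^\kappa+\vgr^\kappa)$. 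Your trick of first normalizing $U,V$ by their weak Lebesgue norms so that $C_u=C_v=1$, applying Lemma~\ref{UniformIntegrability} with $s_0=1$, bounding $g(1,1)\lesssim\Lambda_g$ by monotonicity, and then pulling the norms back out via $g(as,bt)\le c(a^p+b^q)g(s,t)$ collapses all four regions into one step and eliminates the $L^\kappa$ bookkeeping entirely. The price is that the superlinear exponents appearing in the contraction estimate are $p,q$ from the structural inequality rather than $p_*,q_*,\kappa$, but since both sets of exponents exceed $1$ the fixed-point calibration of $t_0$ then $\vgr_0$ goes through unchanged. This is, in my view, a cleaner organization of the same analytic content, and it also makes transparent why the final constants depend only on $(\Lambda_g,c,p,q)$ and not on $\|g\|_\infty$.

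The one thing you should repair is the choice of ambient Banach space. You set $\CO\subset L^1(\Omega;\ei)$, but $T$ and the defining constraints of $\CO$ both involve $\nabla v$, and $L^1$-convergence of $v_n\to v$ gives no control whatsoever on $\nabla v_n$; the ``dominated convergence on a pointwise-convergent subsequence'' step cannot see gradients, so continuity of $T$ on $\CO$ in the $L^1(\Omega;\ei)$ topology is not established, and closedness of $\CO$ in that topology is likewise in doubt. The paper works instead in $W^{1,1}(\Omega;\ei):=\{v\in W^{1,1}_{\mathrm{loc}}(\Omega):v,|\nabla v|\in L^1(\Omega;\ei)\}$ with its natural Sobolev-type norm, in which convergence does carry the gradients, so that both closedness of $\CO$ and continuity of $T$ (via continuity of $\BBG_\mu$ and $\nabla\BBG_\mu$ into $L^1(\Omega;\ei)$, combined with $g$ being bounded and continuous) are straightforward, and compactness follows by the interior $W^{2,1}_{\mathrm{loc}}$ estimate plus dominated convergence, exactly as you sketch. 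Replacing $L^1(\Omega;\ei)$ by $W^{1,1}(\Omega;\ei)$ everywhere makes your argument complete, and one should also note, as the paper implicitly does, that the local Lipschitz hypothesis of Lemma~\ref{UniformIntegrability} is not actually used in its proof, so invoking it for merely continuous $g$ is harmless.
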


\begin{proof}
To demonstrate the existence of a positive weak solution of \eqref{aux1}, we define the operator
\bal
\BBA[w]:= \BBG_\mu[g(w+\vgr \BBK_\mu[\nu],|\nabla(w+\vgr \BBK_\mu[\nu])|)],\quad w\in W^{1,1}(\Omega;\ei),
\eal
with the intention of applying the Schauder fixed point theorem. Here we denote 

\bal
W^{1,1}(\Omega;\ei):= \{ v\in W^{1,1}_{\loc}(\Omega): v,|\nabla v|\in L^{1}(\Omega;\ei)\}.
\eal

Fix $\kappa\in (1,\min\{p,p_*,q,q_*\})$ and set
\bal
A_1(w)&:= \|w\|_{L^{p_*}_w(\xO,\ei)},\;\; A_2(w):=\|\nabla w \|_{L^{q_*}_w(\xO,\ei)},\;\; A_3(w):= \|w\|_{L^\xk(\xO,\ei)}\\
 A_4(w)&:= \|\nabla w\|_{L^\xk(\xO,\ei)}\quad\text{and}\quad A(w) := A_1(w)+A_2(w)+A_3(w)+A_4(w).
\eal 

\medskip

\paragraph{\textbf{Step 1}} First we need an estimate of $\| g(w+\vgr \BBK_\mu[\nu],|\nabla(w+\vgr \BBK_\mu[\nu])|) \|_{L^1(\xO,\ei)}$. Set
\bal
A_{\lambda}&:=\{x\in\Omega: |w+\vgr \BBK_\mu[\nu]|(x)>\lambda \},\quad B_\lambda:= \{x\in\Omega: |\nabla(w+\vgr \BBK_\mu[\nu])|(x)>\lambda^{\frac{p_*}{q_*}} \},\\ 
C_\lambda &:= A_\lambda \cap B_\lambda,
\eal
and moreover define the distribution functions
\bal
a(\lambda):= \int_{A_\lambda} \ei\,dx,\quad b(\lambda):=\int_{B_{\lambda}} \ei\,dx,\quad c(\lambda):= \int_{C_\lambda} \ei\,dx.
\eal
Then, in view of \eqref{intweakleb} we see that
\bal
&a(\lambda) \leq \lambda^{-p_*} \| w+\vgr \BBK_\mu[\nu] \|_{L^{p_*}_w(\Omega,\ei)}^{p_*},\\
&b(\lambda) \leq \lambda^{-p_*} \| \nabla(w+\vgr \BBK_\mu[\nu]) \|_{L^{q_*}_w(\Omega,\ei)}^{q_*},\\
&c(\lambda) \leq \lambda^{-p_*} \min \{ \| w+\vgr \BBK_\mu[\nu] \|_{L^{p_*}_w(\Omega,\ei)}^{p_*}, \| \nabla(w+\vgr \BBK_\mu[\nu] )\|_{L^{q_*}_w(\Omega,\ei)}^{q_*} \}.
\eal
With this notation, we split the integral
\bal
&\| g(w+\vgr \BBK_\mu[\nu],|\nabla(w+\vgr \BBK_\mu[\nu])|) \|_{L^1(\xO,\ei)} \leq \int_{C_1} g(\cdots,|\nabla \cdots|)\ei\,dx + \int_{A_1\cap B_1^c} g(\cdots,|\nabla \cdots|)\ei\,dx \\
&\qquad+\int_{A_1^c\cap B_1^c} g(\cdots,|\nabla \cdots|)\ei\,dx + \int_{A_1^c\cap B_1} g(\cdots,|\nabla \cdots|)\ei\,dx\\
&\qquad=: I_1+I_2+I_3+I_4.
\eal
Integral $I_1$ is estimated as follows. By the properties of distribution functions we see that
\bal
I_1 &\leq -\int_1^\infty g(\lambda,\lambda^{\frac{p_*}{q_*}})\,dc(\lambda) = -g(1,1)c(1) +\int_1^\infty c(\lambda)\, dg(\lambda,\lambda^{\frac{p_*}{q_*}}) \\
&\leq p_* \min \{ \| w+\vgr \BBK_\mu[\nu] \|_{L^{p_*}_w(\Omega,\ei)}^{p_*}, \| \nabla(w+\vgr \BBK_\mu[\nu]) \|_{L^{q_*}_w(\Omega,\ei)}^{q_*} \} \int_1^\infty g(\lambda,\lambda^{p_*/q_*})\lambda^{-1-p_*}\,d\lambda.
\eal
Integrals $I_2$ and $I_4$ are treated similarly. For $I_2$ we have 
\bal
I_2 \leq -\int_1^\infty g(\lambda,1)\,da(\lambda) \leq p_* \| w+\vgr \BBK_\mu[\nu] \|_{L^{p_*}_w(\Omega,\ei)}^{p_*} \int_1^\infty g(\lambda,\lambda^{p_*/q_*})\lambda^{-1-p_*}\,d\lambda,
\eal
and likewise 
\bal
I_4 \leq p_* \|\nabla(w+\vgr \BBK_\mu[\nu]) \|_{L^{q_*}_w(\Omega,\ei)}^{q_*} \int_1^\infty g(\lambda,\lambda^{p_*/q_*})\lambda^{-1-p_*}\,d\lambda.
\eal
As for $I_3$, observe that the power condition on $g$ and the fact that $1<\kappa<\min\{ p,p_*,q,q_* \}$ implies 
\bal
I_3\leq cg(1,1)(\| w+\vgr \BBK_\mu[\nu] \|_{L^{k}(\Omega,\ei)}^{k}+\| \nabla(w+\vgr \BBK_\mu[\nu]) \|_{L^{k}(\Omega,\ei)}^{k}).
\eal
Invoking the weak Lebesgue estimates for $\BBK_{\mu,\gamma}$ with $\gamma=0,1$ we conclude that
\bal
\| g(w+\vgr \BBK_\mu[\nu],|\nabla(w+\vgr \BBK_\mu[\nu])|) \|_{L^1(\xO,\ei)} \leq C(A_1^{p_*}(w)+A_2^{q_*}(w)+A_3^\xk(w)+A_4^\xk(w)+\vgr^\xk)
\eal
for some positive constant $C=C(N,\mu,\Omega,\Lambda_g,c)$.

\medskip

\paragraph{\textbf{Step 2}} We estimate
\bal
A_1(\BBA[w]) &= \| \BBG_\mu[g(w+\vgr \BBK_\mu[\nu],|\nabla(w+\vgr \BBK_\mu[\nu])|)] \|_{L^{p_*}_w(\xO,\ei)}\\
	&\lesssim \| g(w+\vgr \BBK_\mu[\nu],|\nabla(w+\vgr \BBK_\mu[\nu])|) \|_{L^1(\xO,\ei)}
\eal
and
\bal
A_2(\BBA[w]) &= \| \nabla \BBG_\mu[g(w+\vgr \BBK_\mu[\nu],|\nabla(w+\vgr \BBK_\mu[\nu])|)] \|_{L^{q_*}_w(\xO,\ei)}\\
	&\lesssim \| g(w+\vgr \BBK_\mu[\nu],|\nabla(w+\vgr \BBK_\mu[\nu])|) \|_{L^1(\xO,\ei)}
\eal
and similarly
\bal
A_3(\BBA[w])+A_4(\BBA[w]) \lesssim \| g(w+\vgr \BBK_\mu[\nu],|\nabla(w+\vgr \BBK_\mu[\nu])|) \|_{L^1(\xO,\ei)}.
\eal
Now if $A(w)\leq t$, then $A_i(w)\leq t$ for $i=1,\ldots,4$, so it follows that
\bal
A(\BBA[w]) \leq C(t^{p_*}+t^{q_*}+2t^\xk+\vgr^\xk).
\eal
Note that the right-hand side is strictly convex and has value $\vgr^k$ at zero. It follows that there exist  positive $\vgr_0=\vgr_0(N,\mu,\Omega,\Lambda_g,c)$ and $t_0=t_0(N,\mu,\Omega,\Lambda_g,c)$ such that for any $\vgr\in(0,\vgr_0)$, $t\in(0,t_0)$ the inequality
\bal
C(t^{p_*}+t^{q_*}+2t^\xk+\vgr^\xk) \leq t_0
\eal
holds. It follows that
\bal
A(w)\leq t_0 \Rightarrow A(\BBA[w]) \leq t_0.
\eal

\medskip

\paragraph{\textbf{Step 3}} We prove that $\BBA$ is continuous. Indeed, if $w_n\rightarrow w$ in $W^{1,1}(\Omega;\ei)$ then since $g\in C(\BBR\times\BBR_+) \cap L^\infty(\BBR\times\BBR_+)$ it follows that
\bal
g(w_n+\vgr\BBK[\nu],|\nabla(w_n+\vgr\BBK[\nu])|) \longrightarrow g(w+\vgr\BBK[\nu],|\nabla(w_n+\vgr\BBK[\nu])|) \text{ in } L^1(\Omega,\ei),
\eal
and thus $\BBA[w_n] \rightarrow \BBA[w]$ in $W^{1,1}(\Omega;\ei)$ by the continuity of $\BBG_\mu$ and $\nabla\BBG_\mu$.

\medskip

\paragraph{\textbf{Step 4}} We prove that $\BBA$ is compact. Setting $M:=\sup |g| <\infty$, by the previous discussion we easily see that for all $w\in W^{1,1}(\Omega;\ei)$ there holds
\bal
\| \BBA[w] \|_{W^{1,1}(\Omega;\ei)} \leq C(M,\Omega,\mu),
\eal
so it follows that for any sequence $\{w_n\} \subset W^{1,1}(\Omega;\ei)$, the image $\{\BBA[w_n]\}$ is unifomly bounded in $W^{1,1}(\Omega;\ei)$. On the other hand, local elliptic regularity implies that  $\{\BBA[w_n]\}$ is uniformly bounded in $W^{2,1}(D)$ for any $D\Subset\xO$ and hence possesses, up to a subsequence, a strong limit in $W^{1,1}_\loc(\Omega)$, which is also the desired global limit in $W^{1,1}(\Omega;\ei)$ in view of the dominated convergence theorem.

\medskip

\paragraph{\textbf{Step 5}} Combining all previous steps we apply the Schauder fixed point theorem. Set
\bal
\CO:= \{w\in W^{1,1}(\Omega;\ei): A(w)\leq t_0 \},
\eal
which is easily seen to be closed and convex. Then $\BBA[\CO]\subset\CO$ and we have already proven that $\BBA$ is continuous and compact. This yields a solution of \eqref{aux1} satisfying the desired estimate. 
\end{proof}

\begin{proof}[\textbf{Proof of Theorem \ref{subcritical}}]
Let $g_n=\min(n,g)$ for any $n\in\BBN$. From the definition of $\Lambda_g$ it is obvious that $\Lambda_{g_n}\leq \Lambda_g$. Then, considering the auxiliary problem \eqref{aux1} with $g$ replaced by $g_n$, one can find uniform positive constants $\vgr_0$ and $t_0$ which are independent of $n$ so that the conclusions of Lemma \ref{SubcriticalLemma} are valid; we denote the respective solutions by $w_n$. Setting $u_n=w_n+\vgr\BBK[\nu]$, we see that $u_n$ solves the problem
\bal
\left\{ \begin{array}{ll}
	-L_\mu u &= g_n(u,|\nabla u |) \text{ in }\Omega\\
	\tr(u)&= \vgr\nu\\
\end{array}
\right.
\eal
provided that $\vgr\in(0,\vgr_0)$, so in particular
\be\label{subcrit-weak-n}
-\int_\Omega u_n L_\mu \zeta\,dx = \int_\Omega g_n(u_n,|\nabla u_n|)\zeta\,dx -\vgr \int_\Omega \BBK[\nu]L_\mu \zeta\,dx \quad \forall \zeta \in \BBX_\mu(\Omega,\Sigma)
\ee

Since $\{w_n\} \subset \CO$, it follows that $\{ g_n(w_n+\vgr\BBK[\nu],|\nabla(w_n+\vgr\BBK[\nu])|)\}$ is bounded in $L^1(\Omega;\ei)$. Moreover, $\{ \mu w_n d_\Sigma^{-2} \}$ is bounded in $L^1_\loc(\Omega)$, and thus $\{ \Delta w_n \}$ is bounded in $L^1_\loc(\Omega)$, and by local elliptic regularity it follows that $\{w_n \}$ is bounded in $W^{2,1}_\loc(\Omega)$. This implies that, up to a subsequence, $w_n\rightarrow w$ strongly in $W^{1,1}_\loc(\Omega)$ for some $w$, and therefore
\bal
w_n \longrightarrow w \text{ \ a.e.},\quad \nabla w_n \longrightarrow \nabla w \text{ \ a.e. \ in }\Omega.
\eal

By Lemma \ref{UniformIntegrability} and estimate \eqref{aux1-estimate} we deduce the existence of a positive constant $C = C(\Omega,\mu,\Lambda_g,c,p,q,t_0)$ such that for $\lambda\geq 1$
\bal
\int_B g_n(u_n,|\nabla u_n|)\ei\,dx \leq C \int_\lambda^\infty g(s,s^{p_*/q_*})s^{-1-p_*}\,ds + g(\lambda,\lambda^{p_*/q_*}) \int_B \ei\,dx \quad \forall B\in \CB(\Omega).
\eal
Note that the first term on the right-hand side becomes arbitrarily small as $\lambda\rightarrow \infty$, so for any $\vge>0$ there exists a $\lambda>1$ such that
\bal
C \int_\lambda^\infty g(s,s^{p_*/q_*})s^{-1-p_*}\,ds <\frac{\vge}{2}.
\eal
Fixing such $\lambda$ and setting $\delta = \vge / 2\max\{g(\lambda,\lambda^{p_*/q_*}), 1\}$ it follows that
\bal
\int_B \ei\,dx< \delta \Rightarrow \int_B g_n(u_n,|\nabla u_n|)\,dx <\vge \quad \forall B\in \CB(\Omega),
\eal
therefore $\{ g_n(u_n,|\nabla u_n|) \}$ is equi-integrable in $L^1(\Omega;\ei)$. In view of Vitali's convergence theorem, we conclude that
\bal
g_n(u_n,|\nabla u_n|) \longrightarrow g(u,|\nabla u|) \text{ \ in } L^1(\Omega,\ei).
\eal
Then we can let $n\rightarrow \infty$ in \eqref{subcrit-weak-n} to conclude that
\bal
-\int_\Omega u L_\mu \zeta\,dx = \int_\Omega g(u,|\nabla u|)\zeta\,dx -\vgr \int_\Omega \BBK[\nu]L_\mu \zeta\,dx \quad \forall \zeta \in \BBX_\mu(\Omega,\Sigma).
\eal
This means that $u$ is a weak non-negative solution of \eqref{subcrit-bvp}, satisfying
\bal
u= \BBG_\mu[g(u,|\nabla u|)]+\vgr\BBK[\nu].
\eal
In particular we have $u\geq \vgr\BBK[\nu]>0$ in $\Omega$. This completes the proof.
\end{proof}

\begin{remark}
Let us mention briefly the two main examples of subcritical sources that we are interested in. Both examples are of power-type, so condition $g(as,bt)\leq c(a^p+b^q)g(s,t)$ is fullfilled automatically and it suffices to consider only the condition $\Lambda_g<\infty$.
\begin{itemize}
\item Let $g(u,|\nabla u|) = a|u|^p+b|\nabla u|^q$ for some $a,b\geq 0$. Then $\Lambda_g<\infty$ if and only if $p<p_*$ and $q<q_*$.
\item Let $g(u,|\nabla u|) = c|u|^p |\nabla u|^q$ for some $c>0$. Then $\Lambda_g<\infty$ if and only if $p/p_* + q/q_* <1$.
\end{itemize}
\end{remark}

\section{Nonlinear equations with supercritical source} \label{supercritical-source}

\subsection{Abstract setting}

Let $M$ be a metric space and let $J:M\times M \rightarrow (0,\infty]$ be a positive Borel kernel such that $J(x,y)=J(y,x)$ and $J^{-1}$ satisfies a quasi-metric inequality, i.e.
\bal
J^{-1}(x,y)\leq C (J^{-1}(x,z)+J^{-1}(z,y))\quad \forall x,y\in M
\eal
for some $C>1$. In that case we may define the quasi-metric $\mathsf{d}(x,y) := J^{-1}(x,y)$ and denote by
\bal
\mathfrak{B}(x,r):= \{ y\in M: \mathfrak{d}(y,x)<r\}
\eal
the open $\mathfrak{d}$-ball of radius $r$ and center $x$; note that this can be empty, as in general we may have $\mathfrak{d}(x,x)\neq 0$.

Now let $\omega\in \GTM_+(M)$ and $\phi\geq 0$ be a non-negative Borel function. We define the potentials
\bal
\BBJ[\omega](x) \equiv \BBJ[d\omega](x):= \int_M J(x,y)\,d\omega(y),\quad \BBJ[\phi d\omega](x):=\int_M J(x,y)\phi(y)\, d\omega(y),
\eal
as well as the $(p,J,\omega)$-capacity
\bal
\mathrm{Cap}^{p}_{J,\omega}(B):= \inf \bigg\{ \int_M \phi^p\,d\omega : \phi\geq 0,\ \BBJ[\phi d\omega]\geq \chi_B \bigg\}\quad \forall B\in \CB(M)
\eal
where $p>1$.

In this setting we have the following proposition:

\begin{proposition}[\cite{KV}]\label{abstract-proposition}
Let $p>1$ and $\lambda,\omega\in \BBM_+(M)$ be such that
\be\label{abstract-condition-1}
\int_0^{2r} \frac{\omega(\mathfrak{B}(x,s))}{s^2}\,ds \leq C \int_0^r \frac{\omega(\mathfrak{B}(x,s))}{s^2}\,ds
\ee
\be\label{abstract-condition-2}
\sup_{y\in\mathfrak{B}(x,r)} \int_0^r \frac{\omega(\mathfrak{B}(y,s))}{s^2}\,ds \leq C \int_0^r \frac{\omega(\mathfrak{B}(x,s))}{s^2}\,ds
\ee
for any $r>0$ and $x\in M$, where $C>0$ is a constant. Then the following statements are equivalent:
\begin{enumerate}
\item The equation
\bal
v=\BBJ[|v|^p d\omega]+ l \BBJ[\lambda]
\eal
possesses a positive solution provided that $l>0$ is small enough.
\item For any $B\in \CB(M)$ there holds
\bal
\int_B \BBJ[\chi_B \lambda]^p\, d\omega \leq C \lambda(B).
\eal
\item The following inequality holds:
\bal
\BBJ[\BBJ[\lambda]^pd\omega] \leq C \BBJ[\lambda]< \infty.
\eal
\item For any $B\in \CB(M)$ there holds
\bal
\lambda(B)\leq C\, \mathrm{Cap}_{J,\omega}^{p'}(B).
\eal
\end{enumerate}
\end{proposition}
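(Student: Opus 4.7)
The plan is to follow the strategy of Kalton and Verbitsky \cite{KV} by establishing a cyclic chain of implications among (1)--(4), systematically exploiting three ingredients: the symmetry of $J$, which allows Fubini to be applied freely to double integrals of potentials; the quasi-metric structure of $\mathfrak{d}=J^{-1}$; and the doubling/localization hypotheses \eqref{abstract-condition-1}--\eqref{abstract-condition-2}. I would organize the proof as (3) $\Rightarrow$ (1) $\Rightarrow$ (2) $\Leftrightarrow$ (3), and separately (2) $\Leftrightarrow$ (4).

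For (3) $\Rightarrow$ (1) I would construct a solution by a monotone iteration: set $v_0:=2l\,\BBJ[\lambda]$ and $v_{n+1}:=\BBJ[v_n^p\,d\omega]+l\,\BBJ[\lambda]$. Using (3) together with the elementary inequality $(a+b)^p\leq 2^{p-1}(a^p+b^p)$, one verifies inductively that $v_n\leq C l\,\BBJ[\lambda]$ uniformly in $n$ provided $l$ is sufficiently small, and monotone convergence then produces a positive fixed point. The converse (1) $\Rightarrow$ (2) is obtained by noting that any positive solution of the equation satisfies $v\geq l\,\BBJ[\lambda]$ and hence $v\geq\BBJ[v^p\,d\omega]\geq l^p\,\BBJ[\BBJ[\lambda]^p\,d\omega]$; applying Fubini against $\chi_B\,d\lambda$, together with a truncation in which $\lambda$ is replaced by $\chi_B\lambda$, then yields the scale-invariant testing condition.

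The central and most delicate step is (2) $\Leftrightarrow$ (3), which I would establish through a Wolff-type potential analysis. Using a dyadic decomposition of $M$ based on the quasi-metric balls $\mathfrak{B}(x,2^n)$ and the relation $J(x,y)=\mathfrak{d}(x,y)^{-1}$, one first obtains the pointwise representation
\[
\BBJ[f\,d\omega](x)\approx\int_0^\infty \frac{1}{s^2}\int_{\mathfrak{B}(x,s)} f\,d\omega\,ds,
\]
valid for every non-negative Borel $f$. Condition \eqref{abstract-condition-1} lets one truncate this integral at any scale without loss of mass, while \eqref{abstract-condition-2} permits transporting the base point inside a ball. Combining these localizations with a rearrangement of dyadic annuli and a Fefferman--Stein type maximal inequality converts the local testing condition (2) into the pointwise bound (3), and conversely.

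Finally, (2) $\Leftrightarrow$ (4) is a duality statement between the testing condition and the nonlinear capacity. For (2) $\Rightarrow$ (4) one tests any admissible $\phi\geq 0$ with $\BBJ[\phi\,d\omega]\geq\chi_B$ against $\lambda$, uses symmetry of $J$ to rewrite the result as $\int_M \phi\,\BBJ[\chi_B\lambda]\,d\omega$, and then applies H\"older together with (2) to control $\lambda(B)$ by a constant multiple of $(\int_M \phi^{p'}\,d\omega)^{1/p'}\lambda(B)^{1/p}$, from which (4) follows by optimization. The converse uses a Maurey-type factorization, the near-extremal being, up to constants, $\phi\approx\BBJ[\chi_B\lambda]^{p'-1}$. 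The main obstacle throughout is the Wolff-potential analysis of the third paragraph: $\mathfrak{d}$ is only a quasi-metric and may even fail to vanish on the diagonal, so the classical covering and dyadic arguments must be replaced by adaptations that invoke \eqref{abstract-condition-1}--\eqref{abstract-condition-2} at essentially every step.
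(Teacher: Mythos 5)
The paper does not prove this proposition---it is cited directly from Kalton and Verbitsky \cite{KV} with no argument given, so there is no ``paper proof'' to compare against. Your sketch correctly identifies the main ingredients of the KV framework: the layer-cake identity $\BBJ[f\,d\omega](x)=\int_0^\infty s^{-2}\int_{\mathfrak{B}(x,s)}f\,d\omega\,ds$ (which is in fact an exact equality, not merely an approximation), the role of \eqref{abstract-condition-1}--\eqref{abstract-condition-2} in localizing and transporting that integral, and the duality argument together with a Maurey-type factorization for the equivalence with the nonlinear capacity. The monotone iteration in $(3)\Rightarrow(1)$ is also the standard and correct step.

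Two steps as written, however, do not close. First, in $(1)\Rightarrow(2)$ (or, as KV actually organize the circle, $(1)\Rightarrow(3)$), the chain $v\geq l\BBJ[\lambda]$, hence $v\geq\BBJ[v^p\,d\omega]\geq l^p\BBJ[\BBJ[\lambda]^p\,d\omega]$, gives only \emph{finiteness} of $\BBJ[\BBJ[\lambda]^p\,d\omega]$; it does not yield the pointwise domination $\BBJ[\BBJ[\lambda]^p\,d\omega]\leq C\BBJ[\lambda]$. Nor does ``applying Fubini against $\chi_B\,d\lambda$'' produce the testing condition $(2)$: Fubini returns integrals of $\BBJ[\chi_B\lambda]$ over all of $M$, not over $B$. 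The missing ingredient is a nontrivial lemma, exploiting \eqref{abstract-condition-1}--\eqref{abstract-condition-2}, that shows the minimal solution obtained by iteration is \emph{pointwise comparable} to $\BBJ[\lambda]$; from that comparison $(3)$ follows. Second, in $(2)\Rightarrow(4)$, the Fubini--H\"older step you describe gives $\lambda(B)\leq\big(\int_M\phi^{p'}\,d\omega\big)^{1/p'}\big(\int_M\BBJ[\chi_B\lambda]^p\,d\omega\big)^{1/p}$, but hypothesis $(2)$ only controls $\int_B\BBJ[\chi_B\lambda]^p\,d\omega$. Upgrading the local testing condition to the global one is exactly where the dyadic Wolff-potential analysis and the doubling hypotheses do their work. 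These two upgrades are the substantive content of the Kalton--Verbitsky argument, and both are elided in your outline.
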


\subsection{Necessary and sufficient conditions for existence}

For $\xa\leq N$ and $0<\xs<N$ we set
\bal
\CN_{\xa,\xs}(x,y) &:= \frac{\max\{ |x-y|,d_\Sigma(x),d_\Sigma(y) \}^\xa}{|x-y|^{N-\xs} \max\{ |x-y|,d(x),d(y) \}^\xs} \quad \forall x,y\in \overline{\Omega},\, x\neq y,\\
\BBN_{\xa,\xs}[\omega](x) &:= \int_{\overline{\Omega}} \CN_{\xa,\xs}(x,y)\, d\omega(y) \quad \forall \omega\in \GTM_+(\overline{\Omega}),
\eal
with the intention of applying the abstract setting of the previous paragraph.

\begin{lemma}
Let $\alpha \leq N$ and $0<\sigma<N$. There exists a positive constant $C=C(\Omega,\Sigma,\xa,\xs)$ such that
\be\label{quasi-norm}
\CN_{\xa,\xs}^{-1}(x,y) \leq C (\CN_{\xa,\xs}^{-1}(x,z)+\CN_{\xa,\xs}^{-1}(z,y)) \quad \forall x,y,z\in \overline{\Omega}.
\ee
\end{lemma}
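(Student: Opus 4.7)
Write $M(a,b):=\max\{|a-b|,d_\Sigma(a),d_\Sigma(b)\}$ and $m(a,b):=\max\{|a-b|,d(a),d(b)\}$, so that $\CN_{\xa,\xs}^{-1}(a,b)=|a-b|^{N-\xs}m(a,b)^{\xs}M(a,b)^{-\xa}$. The plan is to prove \eqref{quasi-norm} by case analysis reducing matters to the Euclidean triangle inequality. By the symmetry of the kernel in its two arguments I may assume $|x-z|\geq |z-y|$, so that $|x-y|\leq 2|x-z|$. Throughout I rely on the $1$-Lipschitz property of $d_\Sigma$ and $d$, the inclusion $\Sigma\subset\partial\Omega$ (which yields the pointwise estimate $d\leq d_\Sigma$, and hence $m\leq M$), boundedness of $\Omega$, and the hypothesis $\xa\leq N$. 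A first consequence of the Lipschitz property is $M(x,y)\leq 2M(x,z)$ and $m(x,y)\leq 2m(x,z)$.

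I would then split into three regimes according to the size of $|x-z|$ relative to $d(x)$ and $d_\Sigma(x)$. When $|x-z|\leq d(x)/4$, the values of $d(\cdot)$ and $d_\Sigma(\cdot)$ at $x,y,z$ are mutually comparable by Lipschitzness, so $M(x,y)\asymp M(x,z)$ and $m(x,y)\asymp m(x,z)$, and \eqref{quasi-norm} reduces to the trivial bound $|x-y|^{N-\xs}\leq 2^{N-\xs}|x-z|^{N-\xs}$. When $d(x)/4<|x-z|\leq d_\Sigma(x)/4$, the $d_\Sigma$-values remain mutually comparable while $m(x,z)\asymp |x-z|$, and a direct Lipschitz bound on $m(x,y)$ and $m(z,y)$ again yields the estimate. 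When $|x-z|>d_\Sigma(x)/4$, Lipschitzness gives $d_\Sigma(x),d_\Sigma(z)\lesssim |x-z|$ and consequently $d_\Sigma(y)\lesssim|x-z|$, so $M(x,z)\asymp m(x,z)\asymp |x-z|$ and therefore $\CN_{\xa,\xs}^{-1}(x,z)\asymp |x-z|^{N-\xa}$.

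In this last regime, after setting $r:=|x-y|/|x-z|$, $s:=M(x,y)/|x-z|$ and $t:=m(x,y)/|x-z|$, the desired inequality $\CN_{\xa,\xs}^{-1}(x,y)\lesssim \CN_{\xa,\xs}^{-1}(x,z)$ reduces to the algebraic claim $r^{N-\xs}t^{\xs}s^{-\xa}\lesssim 1$ under the constraints $r\leq 2$, $s\lesssim 1$, $t\leq s$, and $r\leq s$; one verifies this by splitting on the sign of $\xs-\xa$ and invoking $\xa\leq N$. The principal obstacle is precisely this algebraic verification: in the configuration where $x$ and $y$ are close to each other and to $\Sigma$ while $z$ lies far from $\Sigma$, both $r$ and $s$ can be arbitrarily small, so one must carefully balance the small factor $r^{N-\xs}$ against the potentially large factor $s^{-\xa}$, using $t\leq s$ together with $\xa\leq N$ to ensure that the product remains bounded.
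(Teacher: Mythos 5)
Your proposal is correct, but takes a genuinely different route from the paper's. The paper splits at the outset on the sign of $\xa$: for $0<\xa\leq N$ it uses the Euclidean bound $|x-z|\gtrsim |x-y|+d(x)$ (or $m(x,z)\gtrsim \max\{|x-y|,d(x)\}$ when $|x-z|\leq d_\Sigma(x)$) to absorb the negative power of $M$ directly, while for $\xa\leq 0$ it reverts to the algebraic observation $M(x,y)\leq |x-y|+\min\{d_\Sigma(x),d_\Sigma(y)\}$ and its $m$-analogue, which makes the negative exponent $-\xa\geq 0$ harmless and lets the Euclidean triangle inequality carry through. By contrast, your approach is sign-agnostic in $\xa$: after the symmetry reduction $|x-z|\geq |z-y|$ you do a three-way split on $|x-z|$ relative to $d(x)$ and $d_\Sigma(x)$, handle the first two regimes by mutual comparability of the distance values, and in the third regime reduce everything to the single scale-free inequality $r^{N-\xs}t^{\xs}s^{-\xa}\lesssim 1$ under $r\leq t\leq s\lesssim 1$. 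That inequality does hold (the cleanest way: bound $r,t\leq s$ and use $s^{N-\xa}\lesssim 1$ from $\xa\leq N$), and it is exactly what you need, so the plan is sound. What the paper buys with its two-case split is a slightly more self-contained presentation for each sign (the $\xa\leq 0$ case is essentially pure triangle inequality); what your approach buys is uniformity in $\xa$ and the isolation of the genuine obstruction into one explicit elementary inequality. When fleshing this out you should make the inequalities $r\leq s$, $t\leq s$, $s\lesssim 1$ explicit (they follow from $|x-y|\leq M(x,y)$, $d\leq d_\Sigma$, and the Lipschitz bound $M(x,y)\leq 2M(x,z)\lesssim |x-z|$ in the third regime), and note that regime 2 also needs the upper bound $m(x,y)\lesssim |x-z|$ which indeed follows from $m(x,y)\leq 2m(x,z)$.
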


\begin{proof}
We proceed as in \cite[Lemma 6.3]{GN2} with minor modifications. We will consider two cases, for positive and non-positive $\xa$.

\medskip

\paragraph{\textbf{Case 1}} Let $0<\xa\leq N$, and suppose for a moment that $|x-y|<2|x-z|$. By the triangle inequality we have $d_\Sigma(z)\leq |x-z|+d_\Sigma(x) \leq 2 \max \{ |x-z|,d_\Sigma(x) \}$, hence
\bal
\max \{ |x-z|,d_\Sigma(x),d_\Sigma(z) \} \leq 2 \max \{ |x-z|,d_\Sigma(x) \}
\eal
If $|x-z|\geq d_\Sigma(x)$ then $|x-z|\geq d(x)$, so in that case $|x-z|\geq (d(x)+|x-y|)/4$, and then
\bal
&\hspace{-1cm}\CN_{\xa,\xs}^{-1}(x,z)  = \frac{|x-z|^{N-\xs}\max \{ |x-z|,d(x),d(z) \}^\xs}{\max\{ |x-z|,d_\Sigma(x),d_\Sigma(z) \}^\xa} \gtrsim |x-z|^{N-\xa} \gtrsim (|x-y|+d(x))^{N-\xa}\\
&= \frac{(|x-y|+d(x))^N}{(|x-y|+d(x))^\xa} \gtrsim \frac{|x-y|^{N-\xs}\max \{ |x-y|,d(x),d(y) \}^\xs}{\max \{ |x-y|,d_\Sigma(x),d_\Sigma(y) \}} = \CN_{\xa,\xs}^{-1}(x,y)
\eal
since $d(x)\leq d_\Sigma(x)$.

If, on the other hand, $|x-z|\leq d_\Sigma(x)$, again we have
\bal
&\hspace{-1cm}\CN_{\xa,\xs}^{-1}(x,z)  = \frac{|x-z|^{N-\xs}\max \{ |x-z|,d(x),d(z) \}^\xs}{\max\{ |x-z|,d_\Sigma(x),d_\Sigma(z) \}^\xa} \gtrsim d_\Sigma^{-\xa}(x) |x-z|^{N-\xs} \max \{ |x-y|,d(x) \}^\xs\\
&\gtrsim \frac{|x-y|^{N-\xs}\max \{ |x-y|,d(x),d(y) \}^\xs}{\max \{ |x-y|,d_\Sigma(x),d_\Sigma(y) \}^\xa} = \CN_{\xa,\xs}^{-1}(x,y)
\eal
and the same can be shown to be true in the case $2|x-z|\leq |x-y|$ by a symmetric argument, thus proving \eqref{quasi-norm} for the case $\xa>0$.

\medskip

\paragraph{\textbf{Case 2}} Let $\xa\leq 0$. Since $d_\Sigma(x)\leq |x-y|+d_\Sigma(y)$, it follows that
\bal
\max \{ |x-y|,d_\Sigma(x),d_\Sigma(y) \} \leq |x-y|+\min \{ d_\Sigma(x),d_\Sigma(y) \},
\eal
and from this and the triangle inequality $|x-y|\leq |x-z|+|z-y|$ we estimate
\bal
\frac{|x-y|^{N-\xs}}{\max \{ |x-y|,d_\Sigma(x),d_\Sigma(y) \}^{\xa}} \lesssim \frac{|x-z|^{N-\xs}}{\max \{ |x-z|,d_\Sigma(x),d_\Sigma(z) \}^{\xa}} + \frac{|z-y|^{N-\xs}}{\max \{ |z-y|,d_\Sigma(z),d_\Sigma(y) \}^{\xa}}.
\eal
Now since $d(x)\leq |x-y|+d(y)$, we see in a similar manner that
\bal
\max \{ |x-y|,d(x),d(y) \} \leq |x-y|+\min \{ d(x),d(y) \},
\eal
therefore
\bal
&\hspace{-1cm} \CN_{\xa,\xs}^{-1}(x,y) = \frac{|x-y|^{N-\xs}\max \{ |x-y|,d(x),d(y) \}^\xs}{\max \{ |x-y|,d_\Sigma(x),d_\Sigma(y) \}^\xa}\\
&\lesssim \frac{|x-y|^N}{\max \{ |x-y|,d_\Sigma(x),d_\Sigma(y) \}^{\xa}}+\frac{\min \{ d(x),d(y) \}^\xs |x-y|^{N-\xs}}{\max \{ |x-y|,d_\Sigma(x),d_\Sigma(y) \}^\xs},
\eal
and \eqref{quasi-norm} follows from this and the previous estimate. This completes the proof.
\end{proof}

Let us recall the following estimate for Euclidean balls.

\begin{lemma}[{\cite[Lemma 6.4]{GN2}}]\label{eucl-ball-estimate}
Let $b>0$, $b+\theta>k-N$ and $d\omega = d^b(x)d_\Sigma^\theta(x)\chi_\Omega(x) dx$. Then
\bal
\omega(B(x,s)) \approx \max \{ d(x),s \}^b \max \{ d_\Sigma(x),s \}^\theta s^N \quad \forall x\in \Omega,\ 0<s\leq 4\,\mathrm{diam}(\Omega).
\eal
\end{lemma}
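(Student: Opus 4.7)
I would establish the estimate via a case analysis based on the relative sizes of $s$, $d(x)$, and $d_\Sigma(x)$, reducing everything to integration in the local Euclidean coordinates near $\partial\Omega$ and $\Sigma$ introduced in Section~\ref{preliminaries} and controlled via \eqref{propdist} and \eqref{propdist2}.

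For the upper bound, since $d$ and $d_\Sigma$ are $1$-Lipschitz, any $y \in B(x,s)$ satisfies $d(y) \leq d(x) + s$ and $d_\Sigma(y) \leq d_\Sigma(x) + s$. When $\theta \geq 0$ (recall $b>0$), the bound $\omega(B(x,s)) \leq (d(x)+s)^b (d_\Sigma(x)+s)^\theta |B(x,s)|$ already yields the desired $\max\{d(x),s\}^b \max\{d_\Sigma(x),s\}^\theta s^N$ up to constants. When $\theta<0$, the pointwise bound on $d_\Sigma^\theta$ goes the wrong way, so I would instead combine $d(y)^b \lesssim \max\{d(x),s\}^b$ with an integral bound on $\int_{B(x,s)\cap\Omega} d_\Sigma^\theta\,dy$ obtained from Lemma~\ref{lemapp:1} applied in the local chart near a closest point $\xi\in\Sigma$; the hypothesis $b+\theta > k-N$ guarantees integrability and, after splitting into the regions $\{d_\Sigma(y) \leq c\max\{d_\Sigma(x),s\}\}$ and its complement, yields the correct $\max\{d_\Sigma(x),s\}^\theta s^N$ factor.

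For the lower bound, the strategy is to exhibit a sub-ball $B(y_0,\rho) \subset B(x,s)\cap\Omega$ with $\rho \approx s$ on which both $d(y) \approx \max\{d(x),s\}$ and $d_\Sigma(y) \approx \max\{d_\Sigma(x),s\}$ uniformly; once this is done, the lower bound reduces to $\rho^N \cdot d^b \cdot d_\Sigma^\theta$ computed at $y_0$. If $s \lesssim \min\{d(x),d_\Sigma(x)\}$, the Lipschitz property lets one take $y_0 = x$. If $d(x) \lesssim s \lesssim d_\Sigma(x)$, I translate $x$ a distance of order $s$ in the inward normal direction to $\partial\Omega$ within the chart at a closest point $\xi\in\Sigma$, producing $y_0$ with $d(y_0) \approx s$ and $d_\Sigma(y_0) \approx d_\Sigma(x)$. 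If $s \gtrsim d_\Sigma(x)$, one must additionally translate within $\partial\Omega$ away from $\Sigma$, which is possible because $\Sigma$ has codimension at least one in $\partial\Omega$, producing $y_0$ with both $d(y_0)$ and $d_\Sigma(y_0)$ of order $s$.

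The main technical obstacle is the construction of $y_0$ in the third case: one must translate within the local chart so that (a) $y_0 \in \Omega$, (b) $B(y_0, s/C) \subset B(x,s)\cap\Omega$, and (c) $d(y_0),\,d_\Sigma(y_0) \gtrsim s$ hold simultaneously with uniform constants. Verifying (c) requires controlling the coordinate expressions of $d$ and $d_\Sigma$ via the quantitative comparisons \eqref{propdist} and \eqref{propdist2}, and ensuring that the translation remains in the validity range of the chart uses the $C^2$ regularity of the parametrizations $\Gamma_{1,\partial\Omega}^\xi$ and $\Gamma_{i,\Sigma}^\xi$ together with the smallness of the relevant tangential derivatives afforded by the choice of $\beta_0$. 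Once $y_0$ is constructed, the Lipschitz property on $B(y_0, s/C)$ finishes the lower bound, matching the upper bound and yielding the claimed two-sided estimate.
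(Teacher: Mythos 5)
The paper cites this estimate from \cite[Lemma 6.4]{GN2} and gives no proof of its own, so there is no in-paper argument to compare against; I will assess the proposal on its own terms. Your overall strategy (reduce to local coordinates near $\Sigma$ and $\partial\Omega$, split into cases according to the relative sizes of $s$, $d(x)$, $d_\Sigma(x)$, prove the upper bound by Lipschitz plus an integral estimate and the lower bound by a comparable sub-ball) is the right one, and the $\theta\ge 0$ upper bound and the lower-bound construction are essentially correct.

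However, the upper bound for $\theta<0$ has a genuine gap. You propose to pull out $d(y)^b\lesssim\max\{d(x),s\}^b$ and then bound $\int_{B(x,s)\cap\Omega}d_\Sigma(y)^\theta\,dy$ via Lemma~\ref{lemapp:1}, asserting that $b+\theta>k-N$ guarantees integrability. It does not: near $\Sigma$, $\int d_\Sigma^\theta\,dy$ converges if and only if $\theta>k-N$, and indeed Lemma~\ref{lemapp:1} with $\alpha_1=\theta$, $\alpha_2=0$ requires $N-k+\theta>0$. The lemma's hypothesis only imposes $b+\theta>k-N$ with $b>0$, so the regime $k-N<b+\theta$, $\theta\le k-N$ is admissible (and is used downstream, where $\theta$ is only required to exceed $\max\{k-N-b,\,-b-\alpha\}$). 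In that regime the two weights must be kept coupled: for example, decompose $B(x,s)\cap\Omega$ into dyadic shells $\{2^{-j-1}s<d_\Sigma\le 2^{-j}s\}$, compute $\int d^b\,d_\Sigma^\theta\approx (2^{-j}s)^{b+\theta}\,s^k\,(2^{-j}s)^{N-k}$ on each shell, and sum the geometric series (which converges precisely because $b+\theta+N-k>0$); or, since $d\le d_\Sigma$, write $d^b\le\max\{d(x),s\}^{b\lambda}\,d_\Sigma^{b(1-\lambda)}$ with $\lambda\in(0,1)$ chosen so that $\theta+b(1-\lambda)>k-N$ and only then invoke Lemma~\ref{lemapp:1}. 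Separately, in your third lower-bound case the extra translation inside $\partial\Omega$ away from $\Sigma$ is unnecessary (and would not even be available when $k=N-1$): since $d\le d_\Sigma$ everywhere, once $y_0$ is produced with $d(y_0)\approx s$ by an inward normal translation, $d_\Sigma(y_0)\ge d(y_0)\gtrsim s$ and $d_\Sigma(y_0)\le d_\Sigma(x)+s\lesssim s$ give $d_\Sigma(y_0)\approx s$ for free.
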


\begin{lemma}
Let $\xa<N$, $b>0$, $\theta > \max \{ k-N-b,-b-\xa \}$ and $d\omega = d^b(x)d_\Sigma^\theta(x)\chi_\Omega(x) dx$. Then \eqref{abstract-condition-1} holds.
\end{lemma}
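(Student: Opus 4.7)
The plan is to reduce \eqref{abstract-condition-1} to a doubling property for the function $s \mapsto \omega(\mathfrak{B}(x,s))$, and to establish that doubling via sharp two-sided estimates on the quasi-metric balls $\mathfrak{B}(x,s) = \{y \in \overline\Omega : \CN_{\xa,\xs}(x,y) > 1/s\}$.

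First I would identify $\mathfrak{B}(x,s) \cap \overline\Omega$ with a Euclidean ball up to universal constants. Splitting according to the size of $|x-y|$ relative to $d(x)$ and $d_\Sigma(x)$, and using that $d(y) \approx d(x)$ and $d_\Sigma(y) \approx d_\Sigma(x)$ whenever these quantities dominate $|x-y|$, the kernel $\CN_{\xa,\xs}$ reduces to a product of powers in three regimes: (I) $|x-y| \leq d(x)$, where $\CN_{\xa,\xs}(x,y) \approx d_\Sigma(x)^{\xa}/(|x-y|^{N-\xs}\, d(x)^{\xs})$; (II) $d(x) \leq |x-y| \leq d_\Sigma(x)$, where $\CN_{\xa,\xs}(x,y) \approx d_\Sigma(x)^{\xa}/|x-y|^N$; and (III) $|x-y| \geq d_\Sigma(x)$, where $\CN_{\xa,\xs}(x,y) \approx |x-y|^{\xa-N}$ (the last holding uniformly in the sign of $\xa$, since in that regime both maxima coincide with $|x-y|$). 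Inverting, $\mathfrak{B}(x,s) \cap \overline\Omega \approx B(x,\rho(x,s)) \cap \overline\Omega$, where $\rho(x,s)$ is piecewise a positive power of $s$ with exponent in $\{1/(N-\xs),\, 1/N,\, 1/(N-\xa)\}$, and the three formulas glue continuously at the transition points. In particular $\rho(x,\cdot)$ is monotone and doubling in $s$, uniformly in $x$.

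Next, since $b > 0$ and $b + \theta > k - N$ by hypothesis, Lemma \ref{eucl-ball-estimate} gives
\[
\omega(\mathfrak{B}(x,s)) \approx \max\{d(x),\, \rho(x,s)\}^b \, \max\{d_\Sigma(x),\, \rho(x,s)\}^\theta \, \rho(x,s)^N.
\]
Each factor is either constant in $s$ or a positive power of $\rho(x,s)$, so the doubling of $\rho(x,\cdot)$ transfers to a doubling property $\omega(\mathfrak{B}(x,2s)) \lesssim \omega(\mathfrak{B}(x,s))$ with a constant depending only on the data. Combined with monotonicity of $s \mapsto \omega(\mathfrak{B}(x,s))$, this yields \eqref{abstract-condition-1} via the standard chain
\[
\int_r^{2r}\!\frac{\omega(\mathfrak{B}(x,s))}{s^2}\,ds \leq \frac{\omega(\mathfrak{B}(x,2r))}{r} \lesssim \frac{\omega(\mathfrak{B}(x,r/2))}{r} \lesssim \int_0^r\!\frac{\omega(\mathfrak{B}(x,s))}{s^2}\,ds.
\]

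The main technical obstacle is ensuring uniformity of the estimates in $x \in \overline\Omega$ and $s > 0$, which requires care at the transitions between regimes and in the case $\xa \leq 0$ where the dominant terms in the maxima need to be re-examined; the analysis of Case 2 in the preceding lemma provides the template. The role of the hypothesis $\theta > -b - \xa$ is to ensure that the exponent $(N+b+\theta)/(N-\xa)$ arising in regime (III) is strictly greater than one, which together with $b > 0$ and $\xs > 0$ controlling regimes (I) and (II) makes $\int_0^r \omega(\mathfrak{B}(x,s))/s^2\,ds$ finite and power-like in $r$.
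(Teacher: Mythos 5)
Your proposal is correct and follows the same core strategy as the paper: reduce $\mathfrak{B}(x,s)$ to a Euclidean ball of comparable radius $\rho(x,s)$, then compute $\omega(\mathfrak{B}(x,s))$ via Lemma \ref{eucl-ball-estimate}. Your three regimes (I) $\rho\le d(x)$, (II) $d(x)\le\rho\le d_\Sigma(x)$, (III) $\rho\ge d_\Sigma(x)$ correspond exactly to the paper's Cases 1, 2, 3 in the variable $s$, the transition thresholds $s=d(x)^Nd_\Sigma^{-\xa}(x)$ and $s=d_\Sigma^{N-\xa}(x)$ match, and the resulting piecewise formula $\omega(\mathfrak{B}(x,s))\approx\max\{d(x),\rho\}^b\max\{d_\Sigma(x),\rho\}^\theta\rho^N$ reproduces display \eqref{quasi-ball-estimate-array}. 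The one genuine difference is the conclusion: the paper passes to an explicit evaluation of $\int_0^s\omega(\mathfrak{B}(x,t))t^{-2}\,dt$ (display \eqref{quasi-ball-integral-estimate-array}), which requires $b>0$ and $b+\theta+\xa>0$ to make the integrands integrable at the origin in each regime, and then simply reads off \eqref{abstract-condition-1}. You instead derive the doubling bound $\omega(\mathfrak{B}(x,2s))\lesssim\omega(\mathfrak{B}(x,s))$ — valid since $\rho(x,\cdot)$ is piecewise a positive power of $s$ gluing continuously at the thresholds, and the maxima contribute factors that are either constant or harmless under doubling regardless of the sign of $\theta$ — and finish by the standard chain. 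Both routes are valid; your doubling argument is slightly cleaner and more conceptual, but it is worth knowing that the paper's explicit integral formula \eqref{quasi-ball-integral-estimate-array} is not gratuitous: it is reused verbatim in the proof of the companion lemma establishing condition \eqref{abstract-condition-2}, so the paper's choice buys a shared ingredient for both lemmas. One small caution: to make your chain rigorous you need the finiteness of $\int_0^r\omega(\mathfrak{B}(x,s))s^{-2}\,ds$, and your parenthetical identification of the roles of $b>0$, $\xs>0$, and $\theta>-b-\xa$ is exactly the right accounting (covering the cases $d(x)=0$ and $d_\Sigma(x)=0$, where regimes I and/or II are empty and one starts directly in regime II or III).
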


\begin{proof}
We proceed as in \cite[Lemma 6.5]{GN2}, with minor modifications. Note that if $s\geq (4\diam(\xO))^{N-\xa}$ then $\xo(\mathfrak{B}(x,s))=\xo(\overline{\xO})<\infty$, where $\mathfrak{B}(x,s):= \{ y\in \Omega: \CN_{\xa,\xs}^{-1}(y,x)<s \}$. Let $M=(4\diam\xO)^{N-\xa}.$ We will show that
\ba\label{quasi-ball-estimate-array}
\gw\left(\mathfrak{B}(x,s)\right)\approx\left\{
\BAL
&d(x)^{b-\frac{\xs N}{N-\xs}} d_\xS(x)^{\theta+\frac{\xa N}{N-\xs}} s^\frac{N}{N-\xs}&&\quad \text{if}\; s\leq d(x)^N d_\xS(x)^{-\xa},\\
&s^\frac{b+N}{N}d_\xS(x)^{\theta+\xa\frac{b+N}{N}}&&\quad \text{if}\;d(x)^N d_\xS(x)^{-\xa}<s\leq d_\xS(x)^{N-\xa},\\
&s^\frac{b+\theta+N}{N-\xa}&&\quad \text{if}\; d_\xS(x)^{N-\xa}\leq s\leq M,\\
&M^\frac{b+\theta+N}{N-\xa}&&\quad \text{if}\;  M\leq s,
\EAL\right.
\ea
which, through straightforward calculations, implies
\ba\label{quasi-ball-integral-estimate-array}
\int_0^{s}\frac{\gw\left(\mathfrak{B}(x,t)\right)}{t^2} \dd t\approx\left\{
\BAL
&d(x)^{b-\frac{\xs N}{N-\xs}}d_\xS(x)^{\theta+\frac{\xa N}{N-\xs}}s^\frac{\xs}{N-\xs} &&\quad \text{if}\; s\leq d(x)^N d_\xS(x)^{-\xa}, \\
&s^\frac{b}{N}d_\xS(x)^{\theta+\xa\frac{b+N}{N}}&&\quad \text{if}\;d(x)^Nd_\xS(x)^{-\xa}<s\leq d_\xS(x)^{N-\xa},\\
&s^\frac{b+\theta+\xa}{N-\xa}&&\quad \text{if}\; d_\xS(x)^{N-\xa}\leq s\leq M,\\
&M^\frac{b+\theta+\xa}{N-\xa}&&\quad \text{if}\;  M\leq s,
\EAL\right.
\ea
since $b>0$ and $b+\theta+\xa>0.$ In particular, we obtain \eqref{abstract-condition-1}. In order to demonstrate \eqref{quasi-ball-estimate-array}, we will consider three cases; the fourth he have already considered in the beginning.

\medskip

\noindent \textbf{Case 1:} $s\leq d(x)^Nd_\xS(x)^{-\xa}$.

a) Let $y\in\mathfrak{B}(x,s)$ be such that $d_\xS(x)\leq |x-y|$, and consequently $d(x)\leq|x-y|$. Then
\bal
\CN_{\xa,\xs}^{-1}(y,x)\approx|x-y|^{N-\xa},
\eal
thus if $|x-y|^{N-\xa}\lesssim s\leq d(x)^N d_\xS(x)^{-\xa}\leq d_\xS(x)^{N-\xa}$ then $d(x)\approx d_\xS(x)\approx |x-y|.$ Hence, there exist constants $C_1,C_2>0$ depending on $\xa,N$ such that
\ba\label{case1a}\BAL
& \left\{y\in\xO:|x-y|\leq C_1(d_\xS(x)^\xa d(x)^{-\xs}s)^\frac{1}{N-\xs},\;d_\xS(x)\leq |x-y| \right\}\\
&\quad\subset \left\{y\in\xO:\CN_{\xa,\xs}^{-1}(y,x)\leq s,\;d_\xS(x)\leq |x-y| \right\}\\
&\qquad\subset \left\{y\in\xO:|x-y|\leq C_2(d_\xS(x)^\xa d(x)^{-\xs} s)^\frac{1}{N-\xs},\;d_\xS(x)\leq |x-y| \right\}.
\EAL
\ea

\medskip

b) Let $y\in\mathfrak{B}(x,s)$ be such that $d(x)\leq|x-y|$ and $d_\xS(x)> |x-y|.$ Then
\bal
\CN_{\xa,\xs}^{-1}(y,x)\approx|x-y|^{N}d_\xS(x)^{-\xa},
\eal
so if $|x-y|^{N}d_\xS(x)^{-\xa} \lesssim s,$ then $|x-y|^{N}\lesssim s d_\xS(x)^{\xa}\leq d(x)^N$. It follows that  $d(x)\approx |x-y|.$ Hence, there exist constants $C_1,C_2>0$ depending on $\xa,N$ such that
\bal\BAL
& \left\{y\in\xO:|x-y|\leq C_1(d_\xS(x)^\xa d(x)^{-\xs}s)^\frac{1}{N-\xs},\;d(x)\leq|x-y|,\;d_\xS(x)> |x-y| \right\}\\
&\quad\subset \left\{y\in\xO:\CN_{\xa,\xs}^{-1}(y,x)\leq s,\;d(x)\leq|x-y|,\;d_\xS(x)> |x-y| \right\}\\
&\qquad\subset \left\{y\in\xO:|x-y|\leq C_2(d_\xS(x)^\xa d(x)^{-\xs} s)^\frac{1}{N-\xs},\;d(x)\leq|x-y|,\;d_\xS(x)> |x-y| \right\}.
\EAL
\eal

\medskip

c)  Let $y\in\mathfrak{B}(x,s)$ be such that $d(x)>|x-y|$, and consequently $d_\xS(x) >|x-y|$. Then
\bal
\CN_{\xa,\xs}^{-1}(y,x)\approx|x-y|^{N-\xs}d(x)^\xs d_\xS(x)^{-\xa}.
\eal
Hence, there exist constants $C_1,C_2>0$ depending on $\xa,N$ such that
\ba\label{case1c}\BAL
& \left\{x\in\xO:|x-y|\leq C_1(d_\xS(x)^\xa d(x)^{-\xs} s)^\frac{1}{N-\xs},\;d(x)>|x-y| \right\}\\
&\quad\subset \left\{x\in\xO:\CN_{\xa,\xs}^{-1}(y,x)\leq s,\;d(x)>|x-y| \right\}\\
&\qquad\subset \left\{x\in\xO:|x-y|\leq C_2(d_\xS(x)^\xa d(x)^{-\xs} s)^\frac{1}{N-\xs},\;d(x)>|x-y|\right\}.
\EAL
\ea

By \eqref{case1a}--\eqref{case1c} and Lemma \ref{eucl-ball-estimate}, we infer
\bal
\xo(\mathfrak{B}(x,s))\approx\xo(B(x,s_1))\approx d(x)^{b-\frac{\xs N}{N-\xs}} d_\xS(x)^{\theta+\frac{\xa N}{N-\xs}} s^\frac{N}{N-\xs},
\eal
where $s_1=(d_\xS(x)^\xa d(x)^{-\xs}s)^\frac{1}{N-\xs}$.

\medskip

\noindent \textbf{Case 2:} $ d(x)^N d_\xS(x)^{-\xa}<s\leq d_\xS(x)^{N-\xa}.$

a) Let $y\in\mathfrak{B}(x,s)$ be such that $d_\xS(x)\leq |x-y|$, and consequently $d(x)\leq |x-y|$. Then
\bal
\CN_{\xa,\xs}^{-1}(y,x)\approx|x-y|^{N-\xa}.
\eal
So if $|x-y|^{N-\xa}\lesssim s\leq d_\xS(x)^{N-\xa}$, then $d_\xS(x)\approx |x-y|$, and there exist constants $C_1,C_2>0$ which depend on $\xa,N$ such that
\ba\label{case2a}\BAL
&\left\{y\in\xO:|x-y|\leq C_1(d_\xS(x)^\xa s)^\frac{1}{N},\;d_\xS(x)\leq |x-y| \right\}\\
&\quad\subset \left\{y\in\xO:\CN_{\xa,\xs}^{-1}(y,x)\leq s,\;d_\xS(x)\leq |x-y| \right\}\\
&\qquad\subset \left\{y\in\xO:|x-y|\leq C_2(d_\xS(x)^\xa s)^\frac{1}{N},\;d_\xS(x)\leq |x-y| \right\}.
\EAL
\ea

\medskip

b) Let $y\in\mathfrak{B}(x,s)$ be such that $d(x)\leq|x-y|$ and $d_\xS(x)> |x-y|.$ Then
\bal
\CN_{\xa,\xs}^{-1}(y,x)\approx|x-y|^{N}d_\xS(x)^{-\xa}.
\eal
It follows that there exist constants $C_1,C_2>0$ depending on $\xa,N$ such that
\bal\BAL
&\left\{y\in\xO:|x-y|\leq C_1(d_\xS(x)^\xa s)^\frac{1}{N},\;d(x)\leq|x-y|,\;d_\xS(x)> |x-y|\right \}\\
&\quad\subset \left\{y\in\xO:\CN_{\xa,\xs}^{-1}(y,x)\leq s,\;d(x)\leq|x-y|,\;d_\xS(x)> |x-y|\right \}\\
&\qquad\subset \left\{y\in\xO:|x-y|\leq C_2(d_\xS(x)^\xa s)^\frac{1}{N},\;d(x)\leq|x-y|,\;d_\xS(x)> |x-y| \right\}.
\EAL
\eal

\medskip

c)  Let $y\in\mathfrak{B}(x,s)$ be such that $d(x)>|x-y|$, and consequently $d_\xS(x)> |x-y|$. Then,
\bal
\CN_{\xa,\xs}^{-1}(y,x)\approx|x-y|^{N-\xs}d(x)^\xs d_\xS(x)^{-\xa},
\eal
\bal
|x-y|^{N-\xs}d(x)^\xs d_\xS(x)^{-\xa}\geq |x-y|^{N}d_\xS(x)^{-\xa},
\eal
and
\bal
|x-y|\leq(d_\xS(x)^\xa s)^\frac{1}{N}=(d_\xS(x)^\xa s)^\frac{1}{N-\xs}(d_\xS(x)^\xa s)^{-\frac{\xs}{N(N-\xs)}}
\leq(d_\xS(x)^\xa d(x)^{-\xs} s)^\frac{1}{N-\xs},
\eal
since $d(x)^N d_\xS(x)^{-\xa}<s$. Hence, there exist constants $C_1,C_2>0$ depending on $\xa,N$ such that
\ba\label{case2c}\BAL
&\left\{y\in\xO:|x-y|\leq C_1(d_\xS(x)^\xa s)^\frac{1}{N},\;d(x)>|x-y| \right \}\\
&\quad \subset \left\{y\in\xO:\CN_{\xa,\xs}^{-1}(y,x)\leq s,\;d(x)>|x-y| \right\}\\
&\qquad \subset \left\{y\in\xO:|x-y|\leq C_2(d_\xS(x)^\xa s)^\frac{1}{N},\;d(x)>|x-y| \right\}.
\EAL
\ea

By \eqref{case2a}-\eqref{case2c} and Lemma \ref{eucl-ball-estimate}, we conclude
\bal
\xo(\mathfrak{B}(x,s))\approx\xo(B(x,s_2))\approx s^\frac{b+N}{N}d_\xS(x)^{\theta+\xa\frac{b+N}{N}},
\eal
where $s_2=(d_\xS(x)^{\xa}s)^\frac{1}{N}.$

\medskip

\noindent \textbf{Case 3:} $  d_\xS(x)^{N-\xa}<s\leq (4\,\diam(\xO))^{N-\xa}.$

a) Let $y\in\mathfrak{B}(x,s)$ be such that $d_\xS(x)\leq|x-y|$, and consequently $d(x)\leq |x-y|.$ Then
\bal
\CN_{\xa,\xs}^{-1}(y,x)\approx|x-y|^{N-\xa}.
\eal
Hence, there exist constants $C_1,C_2>0$ which depend on $\xa,N$ such that
\ba\label{case3a}\BAL
&\left\{y\in\xO:|x-y|\leq C_1s^\frac{1}{N-\xa},\;d_\xS(x)\leq |x-y| \right\}\\
&\quad\subset \left\{y\in\xO:\CN_{\xa,\xs}^{-1}(y,x)\leq s,\;d_\xS(x)\leq |x-y| \right\}\\
&\qquad\subset \left\{y\in\xO:|x-y|\leq C_2s^\frac{1}{N-\xa},\;d_\xS(x)\leq |x-y| \right\}.
\EAL
\ea

\medskip

b) Let $y\in\mathfrak{B}(x,s)$ be such that $d(x)\leq|x-y|$ and $d_\xS(x)> |x-y|.$ Then
\bal
\CN_{\xa,\xs}^{-1}(y,x)\approx|x-y|^{N}d_\xS(x)^{-\xa}.
\eal

On the one hand, if $\xa>0,$ we have
\bal
|x-y|^{N}d_\xS(x)^{-\xa}\leq|x-y|^{N-\xa}
\eal
and
\bal
|x-y|\leq(d_\xS(x)^\xa s)^\frac{1}{N}=s^\frac{1}{N-\xa} s^{-\frac{\xa}{N(N-\xa)}} d_\xS(x)^{\frac{\xa}{N}}
\leq s^\frac{1}{N-\xa},
\eal
since $d_\xS(x)^{N-\xa}<s$.

On the other hand, if $\xa\leq 0$ then
\bal
|x-y|^{N}d_\xS(x)^{-\xa}\geq|x-y|^{N-\xa}
\eal
and
\bal
|x-y|\leq s^\frac{1}{N-\xa}=s^\frac{1}{N} s^{\frac{\xa}{N(N-\xa)}}\leq (d_\xS(x)^\xa s)^\frac{1}{N},
\eal
since $d_\xS(x)^{N-\xa}<s$.

Hence, there exist constants $C_1,C_2>0$ depending on $\xa,N$ such that
\bal\BAL
&\left \{y\in\xO:|x-y|\leq C_1s^\frac{1}{N-\xa},\;d(x)\leq|x-y|,\;d_\xS(x)> |x-y| \right \}\\
&\quad\subset \left\{y\in\xO:\CN_{\xa,\xs}^{-1}(y,x)\leq s,\;d(x)\leq|x-y|,\;d_\xS(x)> |x-y| \right \}\\
&\qquad\subset \left \{y\in\xO:|x-y|\leq C_2s^\frac{1}{N-\xa},\;d(x)\leq|x-y|,\;d_\xS(x)> |x-y| \right \}.
\EAL
\eal

\medskip

c)  Let $y\in\mathfrak{B}(x,s)$ be such that $d(x)>|x-y|.$ Then $d_\xS(x)> |x-y|$ and
\bal
\CN_{\xa,\xs}^{-1}(y,x)\approx|x-y|^{N-\xs}d(x)^\xs d_\xS(x)^{-\xa}.
\eal

By \eqref{case2c}, we may infer the existence of positive constants $C_1,C_2>0$ depending on $\xa,N$ such that
\bal
&\left \{y\in\xO:|x-y|\leq C_1(d_\xS(x)^\xa s)^\frac{1}{N},\;d(x)>|x-y| \right \}\\
&\quad\subset \left\{y\in\xO:\CN_{\xa,\xs}^{-1}(y,x)\geq s,\;d(x)>|x-y| \right\}\\
&\qquad\subset \left\{y\in\xO:|x-y|\leq C_2(d_\xS(x)^\xa s)^\frac{1}{N},\;d(x)>|x-y| \right\}.
\eal
This and the fact that $s>d_\xS^{N-\xa}(x)$ imply the existence of two positive constants $\tilde C_1,\tilde C_2>0$ depending only on $\xa,N$ such that
\ba\label{case3c}\BAL
&\left\{y\in\xO:|x-y|\leq \tilde C_1s^\frac{1}{N-\xa},\;d(x)>|x-y| \right \}\\
&\quad\subset \left \{y\in\xO:\CN_{\xa,\xs}^{-1}(y,x)\leq s,\;d(x)>|x-y| \right \}\\
&\qquad\subset \left\{y\in\xO:|x-y|\leq \tilde C_2s^\frac{1}{N-\xa},\;d(x)>|x-y| \right\}.
\EAL
\ea

Finally, by \eqref{case3a}-\eqref{case3c} and Lemma \ref{eucl-ball-estimate}, we obtain
\bal
\xo(\mathfrak{B}(x,s))\approx\xo(B(x,s_3))\approx s^\frac{b+\theta+N}{N-\xa},
\eal
where $s_3=s^\frac{1}{N-\xa}.$ This completes the proof.
\end{proof}

\begin{lemma}
Let $\xa<N$, $b>0$, $\theta> \max \{ k-N-b,-b-\xa \}$ and $d\omega = d^b(x)d_\Sigma^\theta(x)\chi_\Omega(x) dx$. Then \eqref{abstract-condition-2} holds.
\end{lemma}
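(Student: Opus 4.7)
The plan is to exploit the explicit four-region formula for the integral $F(x,r):=\int_0^r \omega(\mathfrak{B}(x,s))s^{-2}\,ds$ established in \eqref{quasi-ball-integral-estimate-array} in the previous lemma, and to compare $F(y,r)$ with $F(x,r)$ for $y\in\mathfrak{B}(x,r)$ by showing that the distances $d(y)$ and $d_\Sigma(y)$ are comparable to or controlled by $d(x)$, $d_\Sigma(x)$, $r$ in each relevant regime.

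First, I would split the analysis into the three regimes dictated by \eqref{quasi-ball-integral-estimate-array}, namely \textbf{(R1)} $r\leq d(x)^N d_\Sigma(x)^{-\xa}$, \textbf{(R2)} $d(x)^N d_\Sigma(x)^{-\xa}<r\leq d_\Sigma(x)^{N-\xa}$, and \textbf{(R3)} $d_\Sigma(x)^{N-\xa}<r\leq M$ (the regime $r\geq M$ being trivial since $\omega(\mathfrak{B}(\cdot,s))=\omega(\overline\Omega)<\infty$). In each regime one uses the condition $\CN_{\xa,\xs}^{-1}(y,x)\leq r$ together with the trichotomy (a)--(c) already analyzed in the proof of the previous lemma. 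For instance, in \textbf{(R1)} one reads off $|x-y|\lesssim (d_\Sigma(x)^\xa d(x)^{-\xs}r)^{1/(N-\xs)}\ll d(x)$, hence $d(y)\approx d(x)$ and $d_\Sigma(y)\approx d_\Sigma(x)$, so $\omega(\mathfrak{B}(y,s))\approx\omega(\mathfrak{B}(x,s))$ for all $s\leq r$ and the conclusion is immediate. In \textbf{(R2)} one obtains $|x-y|\lesssim (d_\Sigma(x)^\xa r)^{1/N}\leq d_\Sigma(x)$, giving $d_\Sigma(y)\approx d_\Sigma(x)$ but only $d(y)\lesssim d(x)+(d_\Sigma(x)^\xa r)^{1/N}$; one plugs this into the formula and checks by direct computation that $F(y,r)\lesssim F(x,r)$ using $r>d(x)^N d_\Sigma(x)^{-\xa}$.

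The key estimate of \textbf{(R3)} is that every $y\in\mathfrak{B}(x,r)$ satisfies $|x-y|\lesssim r^{1/(N-\xa)}$ (this is the content of case (c) of \textbf{Case 3} in the previous lemma), whence $d(y),d_\Sigma(y)\lesssim d_\Sigma(x)+r^{1/(N-\xa)}\lesssim r^{1/(N-\xa)}$. With these bounds, the exponents of the formula yield
\bal
F(y,r)\lesssim r^{\frac{b+\theta+\xa}{N-\xa}}\approx F(x,r),
\eal
regardless of whether $y$ itself lies in regime (R1), (R2) or (R3) relative to its own parameters $d(y),d_\Sigma(y)$, because the monotonicity of the four piecewise expressions in $r$ (ensured by $b>0$ and $b+\theta+\xa>0$) allows one to dominate $F(y,r)$ by its regime-(R3) expression.

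The main obstacle is the bookkeeping in \textbf{(R2)}, where $y$ can potentially lie in the ``deeper'' regime of $x$ (i.e.\ $d(y)^N d_\Sigma(y)^{-\xa}$ may be larger or smaller than $d(x)^N d_\Sigma(x)^{-\xa}$ depending on the direction of $y-x$), so one must verify the inequality separately in the sub-cases $r\lessgtr d(y)^N d_\Sigma(y)^{-\xa}$. In each sub-case the inequality reduces to an elementary comparison of powers of $r$, $d(x)$, $d_\Sigma(x)$ that follows from the defining inequality of the sub-case and the sign conditions $b>0$, $b+\theta+\xa>0$. Taking the supremum over $y\in\mathfrak{B}(x,r)$ then yields \eqref{abstract-condition-2}.
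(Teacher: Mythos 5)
The paper itself supplies no argument here; it simply cites the (stated as) identical Lemma~6.6 of [GN2]. Your proposal actually works out the proof, and the regime-by-regime strategy built on \eqref{quasi-ball-integral-estimate-array} is the correct one. In particular your treatment of regime (R3) is right: since $y\in\mathfrak{B}(x,r)$ forces $|x-y|\lesssim r^{1/(N-\xa)}$, one obtains $d_\Sigma(y)\lesssim r^{1/(N-\xa)}$, and in each of the three sub-cases (according to which branch of \eqref{quasi-ball-integral-estimate-array} applies to $y$) the constraint defining that branch supplies the matching lower bound on $d_\Sigma(y)$ (resp.\ $d(y)$), so the exponents collapse to $r^{(b+\theta+\xa)/(N-\xa)}$ exactly as you claim.

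There is, however, a small gap in (R1). You assert that $|x-y|\lesssim(d_\Sigma(x)^\xa d(x)^{-\xs}r)^{1/(N-\xs)}\ll d(x)$, hence $d(y)\approx d(x)$. But when $r\leq d(x)^N d_\Sigma(x)^{-\xa}$ the displayed quantity is only $\leq d(x)$ (with equality at the regime boundary), so after absorbing the quasi-ball constant $C_2$ one has $|x-y|\leq C_2 d(x)$ with $C_2$ possibly $\geq 1$; this does not force $d(y)\approx d(x)$, and $y$ may lie in its own regime (R2) or (R3). The fix is the same dichotomy you already invoke for (R2): if $r\ll d(x)^N d_\Sigma(x)^{-\xa}$ then indeed $|x-y|\ll d(x)$ and $y$ stays in regime (R1), giving $F(y,r)\approx F(x,r)$ immediately; if instead $r\approx d(x)^N d_\Sigma(x)^{-\xa}$, one checks (by substituting $r=d(x)^N d_\Sigma(x)^{-\xa}$) that the (R1) and (R2) branches of \eqref{quasi-ball-integral-estimate-array} both evaluate to $d(x)^b d_\Sigma(x)^{\theta+\xa}$, so the two expressions are comparable on the overlap and the bound still holds whichever branch applies to $y$. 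Spelling out this case-split (and the analogous one you defer to ``direct computation'' in (R2)) would make the argument complete; the overall plan is sound.
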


\begin{proof}
The proof is identical to that of \cite[Lemma 6.6]{GN2}, so we omit it. We simply remark that estimate \eqref{quasi-ball-integral-estimate-array} is used, but the $\xs$ dependence becomes irrelevant.
\end{proof}

Combining these results we apply Proposition \ref{abstract-proposition} with $\BBJ=\BBN_{\xa,\xs}$, $d\xo=d^b(x)d_\Sigma^\theta(x)dx$ and $\lambda=\nu$ to obtain:

\begin{lemma}\label{Concrete-conditions}
Let $\xa<N$, $0<\xs<N$, $b>0$ and $\theta>\max\{ k-N-b,-b-\xa \}$. Moreover, assume $p,q\geq 0$ and $p+q>1$, and let $\nu\in \GTM_+(\partial \Omega)$. Then the following conditions are equivalent:
\begin{enumerate}
\item For $l>0$ small, the following equation possesses a positive solution:
\be\label{concrete-condition-1}
v= \BBN_{\xa,\xs}[|v|^{p+q}d^bd_\Sigma^\theta] + l \BBN_{\xa,\xs}[\nu]
\ee
\item For each $B\in \CB(\overline{\Omega})$, there holds
\be\label{concrete-condition-2}
\int_B \BBN_{\xa,\xs}[\chi_B \nu]^{p+q} d^b d_\Sigma^\theta\,dx \leq C\nu(B).
\ee
\item The following inequality holds:
\be\label{concrete-condition-3}
\BBN_{\xa,\xs}[\BBN_{\xa,\xs}[\nu]^{p+q}d^b d_\Sigma^\theta] \leq C \BBN_{\xa,\xs}[\nu].
\ee
\item For each $B\in \CB(\overline{\Omega})$, there holds
\be\label{concrete-condition-4}
\nu(B) \leq C \mathrm{Cap}_{\BBN_{\xa,\xs},d^b d_\Sigma^\theta}^{(p+q)'}(B).
\ee
\end{enumerate}
\end{lemma}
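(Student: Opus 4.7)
The plan is to show that Lemma \ref{Concrete-conditions} is nothing but a direct specialization of the abstract Proposition \ref{abstract-proposition}, with the identifications
\[
M=\overline{\Omega},\qquad J=\CN_{\xa,\xs},\qquad d\omega=d^b(x)d_\xS^\theta(x)\chi_\Omega(x)\,dx,\qquad \lambda=\nu,
\]
and with the exponent in the abstract statement taken to be $p+q$ (so that $p'$ becomes $(p+q)'$ in the capacity inequality). Once the hypotheses of Proposition \ref{abstract-proposition} are checked, the four conditions \eqref{concrete-condition-1}--\eqref{concrete-condition-4} are immediate rewrites of conditions (1)--(4) of that proposition.

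The first thing I would verify is that $\CN_{\xa,\xs}^{-1}$ is a quasi-metric on $\overline{\Omega}$; this was done in the first lemma of the current subsection, which establishes \eqref{quasi-norm} under the hypothesis $\xa\leq N$ and $0<\xs<N$, both of which are assumed in the statement. Next, I must verify conditions \eqref{abstract-condition-1} and \eqref{abstract-condition-2} for the concrete measure $\omega=d^b d_\xS^\theta \chi_\Omega dx$. These are exactly the content of the second and third lemmas above: the hypothesis $\theta>\max\{k-N-b,-b-\xa\}$ is precisely what is required, the first part ($b+\theta>k-N$) ensuring local integrability of $\omega$ near $\xS$ (so Lemma \ref{eucl-ball-estimate} applies), and the second part ($b+\theta+\xa>0$) ensuring that $\omega(\mathfrak{B}(x,s))$ behaves as a power of $s$ with positive exponent, so that the integrals in \eqref{abstract-condition-1}--\eqref{abstract-condition-2} converge at the origin.

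With the hypotheses of Proposition \ref{abstract-proposition} in place, I would then apply it directly with the abstract exponent $p$ replaced by $p+q$ (which is $>1$ by hypothesis). Condition (1) of the abstract proposition becomes \eqref{concrete-condition-1}; condition (2), read with $\lambda=\nu$ and $d\omega=d^b d_\xS^\theta dx$, becomes \eqref{concrete-condition-2}; condition (3) becomes \eqref{concrete-condition-3}; and condition (4), where the capacity is taken at conjugate exponent $(p+q)'$, becomes \eqref{concrete-condition-4}. This yields the equivalence of all four conditions as stated.

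The main potential obstacle is bookkeeping: one has to be careful that the parameter restrictions assumed in the lemma ($\xa<N$, $0<\xs<N$, $b>0$, $\theta>\max\{k-N-b,-b-\xa\}$, $p+q>1$) are precisely the ones under which \emph{all three} preceding lemmas of this subsection simultaneously apply, since the second and third lemmas carry slightly different-looking hypotheses but are in fact consistent with the above. Once this is confirmed, no further analysis is required: the proof is a one-line application of Proposition \ref{abstract-proposition}.
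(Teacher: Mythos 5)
Your proposal is correct and is essentially identical to the paper's argument: the paper states Lemma \ref{Concrete-conditions} as the direct consequence of applying Proposition \ref{abstract-proposition} with $\BBJ=\BBN_{\xa,\xs}$, $d\omega=d^bd_\Sigma^\theta\chi_\Omega\,dx$, $\lambda=\nu$, and exponent $p+q$, after the three preceding lemmas of the subsection have verified the quasi-metric inequality for $\CN_{\xa,\xs}^{-1}$ and the two doubling-type conditions \eqref{abstract-condition-1}--\eqref{abstract-condition-2}. You correctly identify that the parameter restrictions $\xa<N$, $0<\xs<N$, $b>0$, $\theta>\max\{k-N-b,-b-\xa\}$, $p+q>1$ are exactly what make all three auxiliary lemmas and the abstract proposition simultaneously applicable, so no further argument is needed.
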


We are finally able to prove:

\begin{theorem}\label{Existence-capacity}
Let $p,q\geq 0$ with $p+q>1$ and $$\xa_-<\frac{p+1}{p+q-1}.$$ Let $\nu\in\GTM_+(\partial \Omega)$ be such that $\| \nu \|_{\GTM(\partial\Omega)}=1$ and
\bal
\nu(B) \leq C\, \mathrm{Cap}_{\BBN_{2\xa_-,1},d^{p+1}d_\Sigma^{-(p+q+1)\xa_-}}^{(p+q)'}(B) \quad \forall B\in \CB(\overline{\Omega}).
\eal
Then there exists $\vgr>0$ such that the boundary value problem
\be\label{power-BVP} \left\{ \BAL
- L_\gm u&= |u|^p|\nabla u|^q &\text{ in } \Omega\\
\tr(u)&=\vgr \nu &
\EAL \right.
\ee
possesses a non-negative weak solution.
\end{theorem}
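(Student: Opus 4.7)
The strategy is to apply Schauder's fixed point theorem to the nonlinear map $Tw := \BBG_\mu[w^p|\nabla w|^q] + \vgr\,\BBK_\mu[\nu]$ on a closed convex subset of $W^{1,1}_{\loc}(\Omega)$, where the admissible set is constructed so as to be pointwise dominated by multiples of the function $v$ provided by Lemma \ref{Concrete-conditions}. The link between the integral equation of that lemma and the PDE is the crux of the argument.

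First I would invoke Lemma \ref{Concrete-conditions} with parameters $\xa=2\xa_-$, $\xs=1$, $b=p+1$, $\theta=-(p+q+1)\xa_-$. The structural hypotheses are checked routinely: $\xa<N$ follows from $\xa_-\leq (N-k)/2\leq N/2$ (the degenerate equality being excluded under our standing assumption), $0<\xs<N$ and $b>0$ are immediate, and $\theta>\max\{k-N-b,\,-b-\xa\}$ amounts to the two inequalities $(p+q-1)\xa_-<p+1$ (the hypothesis) and $(p+q+1)\xa_-<N-k+p+1$, the latter being a consequence of the former together with $2\xa_-\leq N-k$. Since the capacity hypothesis on $\nu$ in the statement is condition (4) of Lemma \ref{Concrete-conditions}, the lemma produces a positive function $v$ satisfying
\[
v = \BBN_{2\xa_-,1}\!\left[v^{p+q}\,d^{p+1}d_\Sigma^{-(p+q+1)\xa_-}\right] + l\,\BBN_{2\xa_-,1}[\nu]
\]
for some sufficiently small $l>0$, with $v\geq l\,\BBN_{2\xa_-,1}[\nu]>0$ in $\Omega$.

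The central technical step, and the part I expect to be the main obstacle, is to derive pointwise kernel comparison estimates
\[
G_\mu(x,y) \lesssim d(x)d_\Sigma(x)^{-\xa_-}\,\CN_{2\xa_-,1}(x,y)\,d(y)d_\Sigma(y)^{-\xa_-}, \qquad K_\mu(x,\xi)\lesssim d(x)d_\Sigma(x)^{-\xa_-}\,\CN_{2\xa_-,1}(x,\xi),
\]
together with, via the gradient bounds \eqref{estgradgreen},
\[
|\nabla_x G_\mu(x,y)| \lesssim d_\Sigma(x)^{-\xa_-}\,\CN_{2\xa_-,1}(x,y)\,d(y)d_\Sigma(y)^{-\xa_-}, \qquad |\nabla_x K_\mu(x,\xi)| \lesssim d_\Sigma(x)^{-\xa_-}\,\CN_{2\xa_-,1}(x,\xi).
\]
These are established by case analysis on the relative sizes of $|x-y|$, $d(x),d(y)$, and $d_\Sigma(x),d_\Sigma(y)$, applied to the two-sided kernel estimates of the propositions in Section \ref{preliminaries}; the splittings already performed in Section \ref{weak-lebesgue} can be adapted. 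This part is delicate, especially near configurations where $\Sigma$, $\partial\Omega\setminus\Sigma$ and the interior meet, since the factor $\max\{|x-y|,d(x),d(y)\}$ in the denominator of $\CN_{2\xa_-,1}$ has to be carefully reconciled with the $\min\{|x-y|^2,d(x)d(y)\}/|x-y|^N$ behaviour of $G_\mu$.

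With these comparisons in hand, define
\[
E := \left\{ w\in W^{1,1}_{\loc}(\Omega) : 0\leq w(x) \leq C_1\,d(x)d_\Sigma(x)^{-\xa_-}v(x) \text{ and } |\nabla w(x)|\leq C_2\,d_\Sigma(x)^{-\xa_-}v(x) \text{ a.e.}\right\},
\]
for constants $C_1,C_2$ to be chosen. For $w\in E$ the bound $w^p|\nabla w|^q\leq C_1^p C_2^q\,d^p d_\Sigma^{-(p+q)\xa_-}v^{p+q}$ is immediate, and combining it with the kernel comparisons and the integral equation for $v$ yields
\[
\BBG_\mu[w^p|\nabla w|^q](x) \lesssim C_1^p C_2^q\,d(x)d_\Sigma(x)^{-\xa_-}\,\BBN_{2\xa_-,1}\!\left[v^{p+q}d^{p+1}d_\Sigma^{-(p+q+1)\xa_-}\right]\!(x) \leq C_1^p C_2^q\,d(x)d_\Sigma(x)^{-\xa_-}v(x),
\]
together with the analogous bound on its gradient; the Martin contribution is absorbed using $\vgr\,l^{-1}$ times the corresponding $d\,d_\Sigma^{-\xa_-}v$ and $d_\Sigma^{-\xa_-}v$ majorants coming from $l\,\BBN_{2\xa_-,1}[\nu]\leq v$. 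Choosing $\vgr$ small and then $C_1,C_2$ large enough (using $p+q>1$) closes the inclusion $T(E)\subset E$. Continuity of $T$ on $E$ follows from Lebesgue dominated convergence with the common majorant, and compactness from local $W^{2,r}_{\loc}$ elliptic regularity applied to $-L_\mu(Tw)=w^p|\nabla w|^q$ combined with the global pointwise bound. Schauder's theorem then produces a fixed point $u\in E$, which by construction equals $\BBG_\mu[u^p|\nabla u|^q]+\vgr\BBK_\mu[\nu]$ and is therefore the sought non-negative weak solution of \eqref{power-BVP} with $\tr u = \vgr\nu$.
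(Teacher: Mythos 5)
Your proposal follows the same essential strategy as the paper: invoke Lemma~\ref{Concrete-conditions} with $\xa=2\xa_-$, $\xs=1$, $b=p+1$, $\theta=-(p+q+1)\xa_-$, establish the kernel comparisons $G_\mu(x,y)\lesssim d(x)d_\Sigma^{-\xa_-}(x)d(y)d_\Sigma^{-\xa_-}(y)\CN_{2\xa_-,1}(x,y)$ and the analogues for $\nabla_x G_\mu$, $K_\mu$, $\nabla_x K_\mu$, and close a Schauder fixed-point argument on a set of functions with an explicit pointwise majorant. The one genuine, if small, divergence is in which of the four equivalent conditions of Lemma~\ref{Concrete-conditions} you deploy: you first solve the nonlinear integral equation \eqref{concrete-condition-1} to produce $v$ and then use $v$ itself as the majorant; the paper instead works directly with $\BBN_{2\xa_-,1}[\vgr\nu]$ as the majorant and invokes conditions \eqref{concrete-condition-2} (to show $E\subset V$) and \eqref{concrete-condition-3} (to close $\BBA[E]\subset E$). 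Since those conditions are all equivalent via Proposition~\ref{abstract-proposition}, your route is valid; the paper's is marginally more economical in that it avoids the intermediate construction of $v$ and the factor $l^{-1}$, while yours makes the domination structure slightly more transparent.

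One point to tighten: your remark that the inclusion $T(E)\subset E$ is closed by ``choosing $\vgr$ small and then $C_1,C_2$ large enough (using $p+q>1$)'' is imprecise. With $C_1=C_2=C$, invariance amounts to an inequality of the shape $K(C^{p+q}+\vgr l^{-1})\leq C$, and since $p+q>1$ the function $C\mapsto C-KC^{p+q}$ is positive on a bounded interval and attains its maximum at a specific finite $C^*$; one fixes $C=C^*$ (neither ``small'' nor ``large'') and then takes $\vgr$ small enough to absorb the Martin contribution. The superlinearity $p+q>1$ is what produces the positive gap at $C^*$, not a license to take the constants large. Also, to be fully safe, note that $2\xa_-<N$ fails precisely when $k=0$ and $\mu=H^2$; that case is excluded from this theorem and treated separately in Theorem~\ref{0-existence-power-capacity}, which is consistent with your aside about the degenerate equality being excluded, but it is worth making that exclusion explicit since the theorem statement itself does not write $\mu<H^2$.
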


\begin{proof}
We proceed as in the proof of \cite[Theorem 1.11]{GN-gradient}, with modifications to account for the different distance functions involved. The assumption of the theorem is the same as \eqref{concrete-condition-4} where $b=p+1$, $\theta=-(p+q+1)\xa_-$, so all conditions \eqref{concrete-condition-1}--\eqref{concrete-condition-4} apply equivalently. We define the operator
\bal
\BBA[u]:= \BBG_\mu[|u|^p|\nabla u|^q]+\BBK_\mu[\vgr \nu]
\eal
with the intention of applying the Schauder fixed point theorem. As we will see shortly, an appropriate domain for $\BBA$ is the space $$V:= \{ v\in W^{1,1}_{\loc}(\Omega): v\in L^{p+q}(\Omega;d^{1-q}d_\Sigma^{-\xa_-}) \text{ and } \nabla v \in L^{p+q}(\Omega;d^{1+p}d_\Sigma^{-\xa_-}) \}$$ with the obvious choice of topology. Note that H\"older's inequality ensures that $|v|^p|\nabla v|^q\in L^1(\Omega;\ei)$ whenever $v\in V$ so that the operator is well defined.

First we estimate
\bal
G_\mu(x,y) &\approx d(x)d_\Sigma^{-\xa_-}(x)d(y)d_\Sigma^{-\xa_-}(y) \CN_{2\xa_-,2}(x,y)\\
	&\quad\lesssim d(x)d_\Sigma^{-\xa_-}(x)d(y)d_\Sigma^{-\xa_-}(y) \CN_{2\xa_-,1}(x,y)
\eal
and similarly
\bal
|\nabla_x G_\mu(x,y)| &\lesssim d_\Sigma^{-\xa_-}(x)d(y)d_\Sigma^{-\xa_-}(y) \CN_{2\xa_-,1}(x,y) \\
K_\mu(x,\xi) &\approx d(x)d_\Sigma^{-\xa_-}(x) \CN_{2\xa_-,1}(x,\xi) \\
|\nabla_x K_\mu(x,\xi)| &\lesssim d_\Sigma^{-\xa_-}(x) \CN_{2\xa_-,1}(x,\xi),
\eal
and from these we obtain
\bal
|\BBA[u]| &\leq C_1 (dd_\Sigma^{-\xa_-}\BBN_{2\xa_-,1}[dd_\Sigma^{-\xa_-}|u|^p|\nabla u|^q]+dd_\Sigma^{-\xa_-}\BBN_{2\xa_-,1}[\vgr\nu]),\\
|\nabla \BBA[u]| &\leq C_1 (d_\Sigma^{-\xa_-}\BBN_{2\xa_-,1}[dd_\Sigma^{-\xa_-}|u|^p|\nabla u|^q]+d_\Sigma^{-\xa_-}\BBN_{2\xa_-,1}[\vgr\nu])
\eal
for some uniform constant $C_1>0$. Now set $$E:= \{ u\in W^{1,1}_\loc(\Omega): |u|\leq 2C_1 dd_\Sigma^{-\xa_-}\BBN_{2\xa_-,1}[\vgr\nu] \text{ and } |\nabla u|\leq 2C_1 d_\Sigma^{-\xa_-}\BBN_{2\xa_-,1}[\vgr\nu] \},$$ and observe that the equivalent condition \eqref{concrete-condition-2} implies that $E\subset V$. Moreover, \eqref{concrete-condition-3} implies that there exists $\vgr_0=\vgr_0(p,q,C_1,C)>0$ such that
\bal
\vgr\in (0,\vgr_0) \Rightarrow \BBA[E]\subset E.
\eal
$E$ is easily seen to be closed and convex in $V$, and one can show that $\BBA:E\rightarrow \BBA[E]$ is continuous and compact. Thus the Schauder fixed point theorem applies, yielding a solution $u\in E$ of $u=\BBA[u]$, which is consequently a weak solution of \eqref{power-BVP}. Non-negativity can be obtained by restriction to the positive cone of $E$.
\end{proof}

In view of the weak Lebesgue estimates for the Martin operator, we also easily obtain the following.

\begin{proof}[\textbf{Proof of Theorem \ref{powersubcritical}}]
In view of the proof of Theorem \ref{Existence-capacity}, it suffices to demonstrate condition \eqref{abstract-condition-2} with $b=p+1$, $\theta=-(p+q+1)\xa_-$, $\xa=2\xa_-$, $\sigma=1$. In that proof we have seen that
\bal
K_\mu(x,\xi)\approx d(x)d_\Sigma^{-\xa_-}(x)\CN_{2\xa_-,1}(x,\xi),
\eal
so it follows that for all $B\in\CB(\overline{\Omega})$ there holds
\bal
&\int_B \BBN_{2\xa_-,1}[\chi_B \nu]^{p+q}d^{p+1}d_\Sigma^{-(p+q+1)\xa_-}\,dx \\
&\hspace{1cm} \approx \int_B \BBK_\mu[\chi_B \nu]^{p+q}d^{1-q}d_\Sigma^{-\xa_-}\,dx \\
&\hspace{2cm} \lesssim \int_\Omega (\BBK_\mu[\chi_B \nu] d^{-\frac{q}{p+q}})^{p+q} \ei\,dx \\
&\hspace{3cm} = \| \BBK_{\mu,\xg}[\chi_B\nu] \|_{L^{p+q}(\Omega;\ei)}
\eal
where $\xg=q/(p+q)$. Now observe that the two alternative conditions in the statement of the theorem are chosen so that $\| \BBK_{\mu,\xg}[\chi_B\nu] \|_{L^{p+q}(\Omega;\ei)}\leq C \nu(B)$ in view of Theorem \ref{martin-est-p1-p2}. This completes the proof.

\end{proof}

In the sequel we will need the following property, due to \cite[Theorem 1.5.2]{Ad}, which states that
\be\label{capacity-alternative}
(\mathrm{Cap}_{\BBN_{\xa,\xs},d^bd_\Sigma^\theta}^s(E))^{1/s} = \sup\{\omega(E): \omega\in\GTM_+(E),\, \| \BBN_{\xa,\xs}[\omega]\|_{L^{s'}(\Omega,d^bd_\Sigma^\theta)} \leq 1\}.
\ee

For the following results we will need the notion of \textit{Bessel capacity}, which we briefly recall. Let $\langle \xi \rangle := (1+|\xi|^2)^{1/2}$ for $\xi\in\R^d$, and for $\xa\in\R$ let
\bal
B_{d,\xa}(x):= \CF^{-1}(\langle \cdot \rangle^{-\xa})(x)=\int_{\R^d} e^{2\pi i x\xi} \langle \xi \rangle^{-\xa}\, d\xi.
\eal
The Bessel kernel and operator of order $\xa$ are then defined as follows
\bal
\CB_{d,\xa}(x,y):= B_{d,\xa}(x-y),\quad \BBB_{d,\xa}[\tau](x):=\int_{\R^d}\CB_{d,\xa}(x,y)\,d\tau(y)\quad \forall \tau\in \GTM(\R^d).
\eal
Moreover, define the space $L^\xk_\xa(\R^d):= \BBB_{d,\xa}[L^\xk(\R^d)]$ with the norm
\bal
\| \BBB_{d,\xa}[f] \|_{L^\xk_\xa(\R^d)}:= \| f \|_{L^\xk(\R^d)}.
\eal

Now let $1<\xk<\infty$ and $E\subset \R^d$, and set
\bal
\CS_\xa^\xk(E):= \{ f\in L^\xk(\R^d): f\geq 0 \text{ and } \BBB_{d,\xa}[f]\geq \chi_E \}.
\eal
The Bessel $\xk$-capacity of order $\xa$ of $E$ is the defined as follows
\bal
\mathrm{Cap}_{\CB_{d,\xa}}^\xk(E):= \inf\Big\{ \int_{\R^d} f^\xk\,dx : f\in \CS_\xa^\xk(E) \Big\}.
\eal

This is not immediately applicable in our context, but it will be after flattening the submanifold we want to study. Recall that if $\Gamma$ is a $C^2$ $k$-submanifold of $\partial\Omega$ without boundary, then there exist $O_1,\ldots,O_m \subset \R^N$, diffeomorphisms $T_i: O_i \rightarrow B^k(0,1)\times B^{N-k-1}(0,1)\times (-1,1)$ and compact sets $K_1,\ldots,K_m$ such that
\begin{enumerate}
\item $K_i\subset O_i$ for all $i=1,\ldots,m$ and $\bigcup_{i=1}^m K_i=\Gamma$,
\item $T_i(O_i\cap \Gamma) = B^k(0,1)\times \{ 0_{\R^{N-k}} \}$ and $T_i(O_i \cap \Omega)=B^k(0,1)\times B^{N-k-1}(0,1)\times (0,1)$,
\item For any $x\in O_i\cap \Omega$ there exists $y\in O_i\cap \Gamma$ such that $d_\Gamma(x)=|x-y|$.
\end{enumerate}
With this settled, let us define the final notion of $\Gamma$-capacity which we will use in the sequel. Set
\ba\label{besselcapacity}
\mathrm{Cap}^{\Gamma,s}_{\xa}(E):= \sum_{i=1}^m \mathrm{Cap}^s_{\CB_{k,\xa}}(\pi_k \circ T_i(E\cap K_i)) \quad \forall E\subset \Gamma \text{ compact},
\ea
where $\pi_k: \R^k \times \R^{N-k}\rightarrow \R^k$ denotes the projection on the first $k$ components. For later convenience we also set $\tilde{T}_i = \pi_k\circ T_i$. We remark that the definition is independent of the $O_i$. We will be particularly interested in the cases $\Gamma=\Sigma$ and $\Gamma=\partial\Omega$.

\begin{proof}[\textbf{Proof of Theorem \ref{intro-Bessel-Sigma}}]
In view of Lemma \ref{Concrete-conditions} and Theorem \ref{Existence-capacity}, it suffices to show that
\bal
\mathrm{Cap}^{(p+q)'}_{\CN_{2\xa_-,1},d^b d_\Sigma^\theta}(E) \approx \mathrm{Cap}^{\Sigma,(p+q)'}_{\vartheta}(E),
\eal
where $b=p+1$, $\theta=-(p+q+1)\xa_-$ and $\vartheta$ as in the statement of the theorem. By definition we have that
\bal
\mathrm{Cap}^{\Sigma,(p+q)'}_{\vartheta}(E):= \sum_{i=1}^m \mathrm{Cap}^{(p+q)'}_{\CB_{k,\vartheta}}(\pi_k \circ T_i(E\cap K_i)),
\eal
and it also follows easily that
\bal
\mathrm{Cap}^{(p+q)'}_{\CN_{2\xa_-,1},d^b d_\Sigma^\theta}(E) \approx \sum_{i=1}^m \mathrm{Cap}^{(p+q)'}_{\CN_{2\xa_-,1},d^b d_\Sigma^\theta}(E\cap K_i),
\eal
so if we are able to show that
\bal
\mathrm{Cap}^{(p+q)'}_{\CN_{2\xa_-,1},d^b d_\Sigma^\theta}(E\cap K_i) \approx \mathrm{Cap}^{(p+q)'}_{\CB_{k,\vartheta}}(\tilde{T}_i(E\cap K_i))
\eal
we are done.

We intend to use equality \eqref{capacity-alternative}. Let $\xl\in\GTM_+(\Sigma)$ be such that $\BBN_{2\xa_-,1}[\lambda] \in L^{p+q}(\Omega,d^bd_\Sigma^\theta)$ and set $d\lambda_{K_i}:= \chi_{K_i}d\lambda$. On the one hand, since for $x\in\Omega$ and $\xi\in\Sigma$ we have $\CN_{\xa,\xs}(x,\xi) = |x-\xi|^{-N+\xa}$ it follows that
\bal
\int_{O_i} \BBN_{2\xa_-,1}[\lambda_{K_i}]^{p+q}d^bd_\Sigma^\theta\, dx \gtrsim \lambda(K_i)^{p+q} \int_{O_i} d^bd_\Sigma^\theta\,dx \approx \lambda(K_i)^{p+q}.
\eal
On the other hand, arguing in a similar way reveals that
\bal
\int_{\Omega\setminus O_i} \BBN_{2\xa_-,1}[\lambda_{K_i}]^{p+q}d^bd_\Sigma^\theta\, dx \lesssim \lambda(K_i)^{p+q} \int_{\Omega} d^bd_\Sigma^\theta\,dx \approx \lambda(K_i)^{p+q},
\eal
so overall we have
\bal
\int_{\Omega} \BBN_{2\xa_-,1}[\lambda_{K_i}]^{p+q}d^bd_\Sigma^\theta\, dx \approx \int_{O_i} \BBN_{2\xa_-,1}[\lambda_{K_i}]^{p+q}d^bd_\Sigma^\theta\, dx.
\eal
We now pass to the flattening coordinates $T_i(x)=(\psi'(x),\tilde{\psi}(x),\psi_N(x))$, where $d\approx\psi_n$, $d_\Sigma\approx(\psi_N^2+|\tilde{\psi}|^2)^{1/2}\approx \psi_N+ |\tilde{\psi}|$ and
\bal
\CN_{\xa,\xs}(x,\xi) = |x-\xi|^{-N+\xa} \approx (\psi_N+ |\tilde{\psi}|+|\psi'-\xi'|)^{-N+\xa}.
\eal
Now set $\overline{\lambda}_i \in \GTM_+(B^k(0,1))$ with $\overline{\lambda}_i(E):= \lambda(\tilde{T}_i^{-1}(E\cap K_i))$. Then:
\bal
\int_{O_i} \BBN_{2\xa_-,1}[\lambda_{K_i}]^{p+q}d^bd_\Sigma^\theta\, dx &\approx \int_{B^k(0,1)}\int_{B^{N-k-1}(0,1)}\int_0^1 \psi_N^{p+1}(\psi_N+|\tilde{\psi}|)^{-(p+q+1)\xa_-}\\
&\hspace{1cm} \times\bigg( \int_{B^k(0,1)} (\psi_N+ |\tilde{\psi}|+|\psi'-\xi'|)^{-N+2\xa_-}\, d\overline{\lambda}_i(\xi') \bigg)^{p+q}\, d\psi_N d\tilde{\psi} d\psi' \\
&\approx \int_{B^k(0,1)} \int_0^1 r^{N-k-1+p+1-(p+q+1)\xa_-} \\
&\hspace{1cm} \times\bigg( \int_{B^k(0,1)} (r+|\psi'-\xi'|)^{-N+2\xa_-}\, d\overline{\lambda}_i(\xi') \bigg)^{p+q}\, drd\psi'\\
&\approx \int_{\R^k} \BBB_{k,\gv}[\overline{\lambda}_i]^{p+q}(x')\,dx',
\eal
where the last estimate is derived in detail in the next lemma. Thus we have proven that
\bal
\| \BBN_{2\xa_-,1}[\lambda_{K_i}] \|_{L^{p+q}(\Omega,d^bd_\Sigma^\theta)} \approx \| \BBB_{k,\gv}[\overline{\lambda}_i] \|_{L^{p+q}(\R^k)},
\eal
which implies the conclusion in view of \eqref{capacity-alternative}.
\end{proof}

To complete the proof we need the following lemma.

\begin{lemma}
Let $p,q\geq 0$ with $p+q>1$ and $\nu\in \GTM_+(\R^k)$ with $\supp \nu \in B^k(0,R/2)$ for some $R>0$ and let 
\bal
\gv:= k-N+2\xa_-+\frac{N-k+p+1-(p+q+1)\xa_-}{p+q}.
\eal
Assume that $0<\gv<k$ and that $N-k>(p+q+1)\xa_--p-1$. For $x\in \R^{k+1}$ we write $x=(x_1,x')$. Then
\bal
&\hspace{-1cm} \int_{B^k(0,1)} \int_0^1 x_1^{N-k-1+p+1-(p+q+1)\xa_-} \\
&\times\bigg( \int_{B^k(0,1)} (x_1+|x'-y'|)^{-N+2\xa_-}\, d\overline{\lambda}_i(y') \bigg)^{p+q}\, drd\psi' \\
&\hspace{1cm} \approx \int_{\R^k} \BBB_{k,\gv}[\nu]^{p+q}(x')\,dx',
\eal
where the implicit constant depends on $R,N,k,\mu,p$.
\end{lemma}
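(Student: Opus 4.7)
The plan is to establish the two-sided bound by first reducing the Bessel potential to a Riesz potential (valid because $\nu$ has compact support), then proving the upper bound via Minkowski's integral inequality, and finally obtaining the lower bound through a dyadic/tent-space argument of Wolff type.

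First, I would observe that since $\supp\nu \subset B^k(0,R/2)$ is bounded, the Bessel kernel $B_{k,\vartheta}$ and the truncated Riesz kernel are comparable: $B_{k,\vartheta}(x) \approx |x|^{-(k-\vartheta)}$ for $|x|\leq 1$ and decays exponentially for $|x|>1$. Combined with the fact that $I_\vartheta\nu(x'):=\int |x'-y'|^{-(k-\vartheta)}\,d\nu(y')$ and $\BBB_{k,\vartheta}[\nu](x')$ differ only in their far-field tails, one has
\begin{equation*}
\int_{\R^k}\BBB_{k,\vartheta}[\nu]^{p+q}\,dx'\ \approx\ \int_{\R^k}(I_\vartheta\nu)^{p+q}\,dx'\ \approx\ \int_{B^k(0,R+1)}(I_\vartheta\nu)^{p+q}\,dx',
\end{equation*}
with implicit constants depending only on $R$. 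It then suffices to compare the LHS of the lemma to $\|I_\vartheta\nu\|_{L^{p+q}}^{p+q}$. Writing $s:=N-2\xa_-$ and $a:=N-k+p-(p+q+1)\xa_-$, the assumptions $0<\vartheta<k$ and $N-k>(p+q+1)\xa_--p-1$ translate to the convergence conditions $0<a+1<s(p+q)$ and the key algebraic identity $k-\vartheta=s-(a+1)/(p+q)$.

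Second, for the upper bound I would apply Minkowski's integral inequality in $L^{p+q}((0,1);x_1^a dx_1)$ to bring $d\nu(y')$ outside the norm. The inner integral is then evaluated by the substitution $x_1=|x'-y'|u$:
\begin{equation*}
\int_0^1 x_1^a (x_1+|x'-y'|)^{-s(p+q)}\,dx_1\ \lesssim\ |x'-y'|^{a+1-s(p+q)}\ =\ |x'-y'|^{-(k-\vartheta)(p+q)},
\end{equation*}
which is uniformly valid because $|x'-y'|$ is bounded on $B^k(0,1)\times B^k(0,R/2)$ and $0<a+1<s(p+q)$. Taking $(p+q)$-th roots and integrating gives the pointwise bound $\bigl(\int_0^1 x_1^a h^{p+q}\,dx_1\bigr)^{1/(p+q)}\lesssim I_\vartheta\nu(x')$, and then integration over $x'\in B^k(0,1)$ together with the first reduction concludes this direction.

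Third, for the lower bound I would use the pointwise estimate $(x_1+|x'-y'|)^{-s}\geq (2x_1)^{-s}$ whenever $|x'-y'|<x_1$, which gives $h(x_1,x')\gtrsim x_1^{-s}\nu(B^k(x',x_1))$. Using the algebraic identity $s(p+q)-a-1=(k-\vartheta)(p+q)$, this produces
\begin{equation*}
\int_0^1 x_1^a h(x_1,x')^{p+q}\,dx_1\ \gtrsim\ \int_0^1 x_1^{-(k-\vartheta)(p+q)-1}\,\nu(B^k(x',x_1))^{p+q}\,dx_1.
\end{equation*}
The right-hand side, after integration over $x'$, is a ball-averaged expression of Muckenhoupt/Wolff type that is classically equivalent to $\|I_\vartheta\nu\|_{L^{p+q}(\R^k)}^{p+q}$ (see Adams--Hedberg); the passage from the Riesz-potential $L^{p+q}$ norm over $\R^k$ to the LHS over $B^k(0,1)$ uses that $\supp\nu$ lies strictly inside $B^k(0,1)$, so that Riesz and Bessel potentials are comparable on the relevant region.

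The main obstacle is this last step, namely converting the dyadic ball-average lower bound into a genuine $L^{p+q}$ bound on the Riesz potential. A direct Minkowski-style comparison fails in this direction, and one must either invoke the Wolff/Adams--Hedberg equivalence between the fractional maximal characterization and the Riesz potential norm, or perform a careful tent-space duality argument; the conditions $0<\vartheta<k$ and $p+q>1$ are precisely what makes this equivalence quantitatively valid.
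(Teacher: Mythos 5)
Your overall strategy matches the paper's: both treat the upper and lower bounds separately, with the upper bound coming from an explicit evaluation of the $x_1$-integral against the Riesz kernel and the lower bound coming from the ball-measure estimate $h(x_1,x')\gtrsim x_1^{-s}\nu(B^k(x',x_1))$ followed by a classical norm equivalence. Where you genuinely diverge is in the upper bound: you apply Minkowski's integral inequality in $L^{p+q}((0,1);x_1^a dx_1)$ to pull $d\nu(y')$ outside the power, then evaluate $\int_0^1 x_1^a(x_1+|x'-y'|)^{-s(p+q)}\,dx_1$ directly. The paper instead first rewrites the inner Riesz integral via the layer-cake formula of \cite[Lemma 3.1.1]{Ad} as $\int_{x_1}^{4R}\nu(B^k(x',r))r^{-(N-2\xa_-)}\,dr/r$, applies H\"older with an auxiliary exponent $\xe$, uses Fubini to swap the $x_1$- and $r$-integrals, and then performs a dyadic decomposition to compare the result with the truncated potential $\BBW_{\gv,8R}[\nu]^{p+q}$. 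Your Minkowski shortcut is correct (and arguably more transparent), and the algebraic identity $s-(a+1)/(p+q)=k-\gv$ you record is exactly what makes it close up. Both routes buy the same conclusion; the paper's longer route has the advantage that the object it lands on, $\BBW_{\gv,8R}[\nu]$, is precisely what the cited lemma \cite[Lemma 2.3]{BVN} speaks about.

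For the lower bound you and the paper do essentially the same thing, but you are vaguer at the final step. After the ball lower bound you land on $\int\int_0^1\big(\nu(B^k(x',r))/r^{k-\gv}\big)^{p+q}\,dr/r\,dx'$ and say this is ``classically equivalent'' to the Bessel norm ``(see Adams--Hedberg)''. The statement you actually need is the Muckenhoupt--Wheeden type equivalence $\|M_{\gv}\nu\|_{L^{p+q}(\R^k)}\approx\|\BBB_{k,\gv}[\nu]\|_{L^{p+q}(\R^k)}$ in the measure setting, which the paper invokes precisely as \cite[Lemma 2.3]{BVN}. You should also be a bit more careful about the truncation parameter when passing from the integral over $B^k(0,1)$ to an integral over $\R^k$: the paper first bounds the inner $dr/r$-integral below by the maximal function truncated at $R/2$, notes that this maximal function vanishes for $|x'|>R$, and only then enlarges the domain of integration. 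Your sketch asserts the extension without this support observation. None of this changes the correctness, but citing the right lemma and spelling out the truncation is what turns the sketch into a proof.
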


\begin{proof}
\textbf{Upper bound.} Let $0<x_1<R$ and $|x'|<R$. Arguing as in the proof of \cite[Lemma 3.1.1]{Ad} we see that
\bal
&\hspace{-1cm} \int_{B^k(0,R)} (x_1+|x'-y'|)^{-(N-2\xa_-)}d\nu(y') \\
&\leq \int_{B^k(x',2R)} (x_1+|x'-y'|)^{-(N-2\xa_-)}d\nu(y') \\
&\hspace{1cm}= (N-2\xa_-)  \int_0^{2R}\frac{\nu(B^k(x',r))}{(x_1+r)^{N-2\xa_-}} \frac{dr}{x_1+r} + \frac{\nu(B^k(0,2R))}{(x_1+2R)^{N-2\xa_-}} \\
&\hspace{2cm}\lesssim \int_0^{3R} \frac{\nu(B^k(x',r))}{(x_1+r)^{N-2\xa_-}} \frac{dr}{x_1+r} \\
&\hspace{3cm}\leq \int_{x_1}^{4R} \frac{\nu(B^k(x',r))}{r^{N-2\xa_-}} \frac{dr}{r}.
\eal
Now set $\xb:=(p+q+1)\xa_--p-1$. By the above estimate it follows that
\bal
&\hspace{-1cm}\int_0^R x_1^{N-k-1-\xb} \bigg(\int_{B^k(0,R)} (x_1+|x'-y'|)^{-(N-2\xa_-)}d\nu(y')\bigg)^{p+q}dx_1 \\
&\lesssim \int_0^R x_1^{N-k-1-\xb} \bigg( \int_{x_1}^{4R} \frac{\nu(B^k(x',r))}{r^{N-2\xa_-}} \frac{dr}{r} \bigg)^{p+q} dx_1.
\eal
Now let $0<\xe<N-k-\xb$ (note that this is valid if $\xb<N-k$). By H\"older's inequality and Fubini's theorem we see that
\bal
&\hspace{-1cm} \int_0^R x_1^{N-k-1-\xb} \bigg( \int_{x_1}^{4R} \frac{\nu(B^k(x',r))}{r^{N-2\xa_-}} \frac{dr}{r} \bigg)^{p+q} dx_1 \\
&\leq \int_0^R x_1^{N-k-1-\xb} \bigg(\int_{x_1}^\infty r^{-\xe\frac{(p+q)'}{p+q}} \frac{dr}{r} \bigg)^{\frac{p+q}{(p+q)'}} \int_{x_1}^{4R} \bigg( \frac{\nu(B^k(x',r))}{r^{N-2\xa_--\frac{\xe}{p+q}}} \bigg)^{p+q} \frac{dr}{r} dx_1 \\
&\hspace{1cm} = C(p,q,\xe) \int_0^R x_1^{N-k-1-\xb-\xe} \int_{x_1}^{4R} \bigg( \frac{\nu(B^k(x',r))}{r^{N-2\xa_--\frac{\xe}{p+q}}} \bigg)^{p+q} \frac{dr}{r} dx_1 \\
&\hspace{2cm} \leq C(p,q,\xe,N,k,\xa_-,R) \int_{0}^{4R} \bigg( \frac{\nu(B^k(x',r))}{r^{N-2\xa_--\frac{N-k-\xb}{p+q}}} \bigg)^{p+q} \frac{dr}{r}.
\eal
Note that $\gv$ is actually defined so that
\bal
N-2\xa_--\frac{N-k-\xb}{p+q}=k-\gv.
\eal

Next, we estimate
\bal
&\int_{0}^{4R} \bigg( \frac{\nu(B^k(x',r))}{r^{k-\gv}} \bigg)^{p+q} \frac{dr}{r} \\
&\hspace{1cm}= \sum_{n=0}^\infty \int_{2^{-n+1}R}^{2^{-n+2}R} \bigg( \frac{\nu(B^k(x',r))}{r^{k-\gv}} \bigg)^{p+q} \frac{dr}{r} \\
&\hspace{2cm} \leq \ln 2 \sum_{n=0}^\infty 2^{(p+q)(n-1)(k-\gv)} \bigg( \frac{\nu(B^k(x',2^{-n+2}R))}{R^{k-\gv}} \bigg)^{p+q} \\
&\hspace{3cm} \leq \ln 2 \bigg( \sum_{n=0}^\infty 2^{(n-1)(k-\gv)} \frac{\nu(B^k(x',2^{-n+2}R))}{R^{k-\gv}} \bigg)^{p+q} \\
&\hspace{4cm} \leq 4^{(p+q)(k-\gv)} (\ln 2)^{-(p+q-1)} \bigg( \sum_{n=0}^\infty \int_{2^{-n+2}R}^{2^{-n+3}R} \frac{\nu(B^k(x',r))}{r^{k-\gv}} \frac{dr}{r} \bigg)^{p+q} \\
&\hspace{5cm}\leq 4^{(p+q)(k-\gv)} (\ln 2)^{-(p+q-1)} \bigg( \int_0^{8R} \frac{\nu(B(x',r))}{r^{k-\gv}} \frac{dr}{r} \bigg)^{p+q}.
\eal
Setting
\bal
\BBW_{\gv,8R}[\nu](x') := \int_0^{8R} \frac{\nu(B^k(x',r))}{r^{k-\gv}}\frac{dr}{r},
\eal
we see that
\bal
&\hspace{-1cm}\int_0^R x_1^{N-k-1-\xb} \bigg(\int_{B^k(0,R)} (x_1+|x'-y'|)^{-(N-2\xa_-)}d\nu(y')\bigg)^{p+q}dx_1 \\
&\lesssim \int_{\R^k} \BBW_{\gv,8R}[\nu](x')^{p+q} dx' \\
&\hspace{1cm} \lesssim \int_{\R^k} \BBB_{k,\gv}[\nu](x')^{p+q}dx',
\eal
the last inequality owing to \cite[Lemma 2.3]{BVN} with parameters $\tilde{p}=2$, $\xa= \gv/2$, $\tilde q=s=p+q$, $N=k$. (We denote here by $\tilde{p},\tilde{q}$ the parameters $p,q$ in \cite[Theorem 2.3]{BVN}. In addition, to apply this lemma, we should assume that $0<\theta<k,$ for this reason we obtain the assumptions on $p$ and $q$.)

\medskip

\noindent
\textbf{Lower bound.} Let $0<x_1<R$ and $|x'|<R$. By \cite[Lemma 3.1.1]{Ad} we have that
\bal
&\hspace{-1cm} \int_{B^k(0,R)} (x_1+|x'-y'|)^{-(N-2\xa_-)}d\nu(y') \\
&= (N-2\xa_-) \int_{x_1}^\infty \frac{\nu(B^k(x',r-x_1))}{r^{N-2\xa_-}} \frac{dr}{r} \\
&\hspace{1cm}\geq (N-2\xa_-) \int_{2x_1}^\infty \frac{\nu(B^k(x',r/2))}{r^{N-2\xa_-}} \frac{dr}{r} \\
&\hspace{2cm}\geq C(N,\xa_-) \int_{x_1}^\infty \frac{\nu(B^k(x',r))}{r^{N-2\xa_-}} \frac{dr}{r}.
\eal
Then it follows that
\bal
&\hspace{-1cm} \int_0^R x_1^{N-k-1-\xb} \bigg( \int_{B^k(0,R)} (x_1+|x'-y'|)^{-(N-2\xa_-)} d\nu(y') \bigg)^{p+q}dx_1 \\
&\gtrsim \int_0^R x_1^{N-k-1-\xb} \bigg( \int_{x_1}^\infty \frac{\nu(B^k(x',r))}{r^{N-2\xa_-}} \frac{dr}{r} \bigg)^{p+q}dx_1 \\
&\hspace{1cm}\geq \int_0^R x_1^{N-k-1-\xb} \bigg( \int_{x_1}^{2x_1} \frac{\nu(B^k(x',r))}{r^{N-2\xa_-}} \frac{dr}{r} \bigg)^{p+q}dx_1 \\
&\hspace{2cm}\gtrsim \int_0^R \bigg( \frac{\nu(B^k(x',x_1))}{x_1^{k-\gv}} \bigg)^{p+q} \frac{dx_1}{x_1}.
\eal
Now, for $0<r<R/2$ we see that
\bal
\int_0^R \bigg( \frac{\nu(B^k(x',x_1))}{x_1^{k-\gv}} \bigg)^{p+q} \frac{dx_1}{x_1} \geq \int_r^{2r} \bigg( \frac{\nu(B^k(x',x_1))}{x_1^{k-\gv}} \bigg)^{p+q} \frac{dx_1}{x_1} \gtrsim \bigg( \frac{\nu(B^k(0,r))}{r^{k-\gv}} \bigg)^{p+q},
\eal
and taking the supremum over all such $r$ yields
\bal
\int_0^R \bigg( \frac{\nu(B^k(x',x_1))}{x_1^{k-\gv}} \bigg)^{p+q} \frac{dx_1}{x_1} \gtrsim \sup_{r\in(0,R/2)} \bigg( \frac{\nu(B^k(0,r))}{r^{k-\gv}} \bigg)^{p+q}.
\eal
Let us define the maximal function
\bal
M_{\gv,R/2}(x'):= \sup_{r\in(0,R/2)} \frac{\nu(B^k(x',r))}{r^{k-\gv}}.
\eal
Then, since $\nu$ has compact support in $B^k(0,R/2)$,
\bal
&\hspace{-1cm} \int_0^R x_1^{N-k-1-\xb} \bigg( \int_{B^k(0,R)} (x_1+|x'-y'|)^{-(N-2\xa_-)} d\nu(y') \bigg)^{p+q}dx_1 \\
&\gtrsim \int_{B^k(0,R)} M_{\gv,R/2}(x')^{p+q}\,dx' = \int_{\R^k} M_{\gv,R/2}(x')^{p+q}\,dx'.
\eal
The proof is again finished by an application of \cite[Lemma 2.3]{BVN}.
\end{proof}

\begin{proof}[\textbf{Proof of Theorem \ref{intro-Bessel-Omega}}]
Proceeding as in the proof of Theorem \ref{intro-Bessel-Sigma} and selecting $\Gamma=\partial\Omega$, it suffices to show that
\bal
\mathrm{Cap}^{(p+q)'}_{\CN_{2\xa_-,1},d^{p+1}d_\Sigma^{-(p+q+1)\xa_-}}(E\cap K_i) \approx \mathrm{Cap}^{(p+q)'}_{\CB_{N-1},\frac{2-q}{p+q}}(\tilde{T}_i(E\cap K_i))
\eal
for $i=1,\ldots,m$. Let $\xl\in\GTM_+(\partial\Omega)\cap\GTM_c(\partial\Omega\setminus\Sigma)$ be such that $\BBN_{2\xa_-,1}[\lambda] \in L^{p+q}(\Omega,d^bd_\Sigma^\theta)$ and set $d\lambda_{K_i}:= \chi_{K_i}d\lambda$. Then there is $\beta=\beta(\nu)>0$ such that $\Sigma_\beta\cap\supp\nu = \varnothing$. It is easy to show that 
\bal
&\int_\Omega \BBN_{2\xa_-,1}[\lambda_{K_i}]^{p+q}d^{p+1} d_\Sigma^{-(p+q+1)\xa_-}\,dx \\
&\hspace{1cm} \approx \sum_{i=1}^m \int_{O_i} \BBN_{2\xa_-,1}[\lambda_{K_i}]^{p+q}d^{p+1} d_\Sigma^{-(p+q+1)\xa_-}\,dx \\
&\hspace{2cm} \approx \sum_{i=1}^m \int_{O_i\setminus\Sigma_\beta} \BBN_{2\xa_-,1}[\lambda_{K_i}]^{p+q}d^{p+1}\,dx,
\eal
and note that for $x\in O_i\setminus\Sigma_\beta$ and $\xi\in\supp\nu$ we have that $d_\Sigma(x) \approx 1 \approx d_\Sigma(\xi)$ and $\CN_{2\xa_-,1}(x,\xi) \approx |x-\xi|^{-N}$.

Finally, we may apply \cite[Proposition 2.9]{BHV} (with parameters $\xa=\xb=0$, $s=(p+q)'$, $\xa_0=p+1$) to obtain the conclusion. Note that the extra conditions on $p,q$ are such that the proposition is applicable.
\end{proof}

\subsection{The case $\Sigma=\{0\}$}

Now we set some notation to treat the special case $\Sigma=\{ 0 \}$ and $\mu=H^2$ (in which $k=0$ and $\xa_-=N/2$). Let $0<\xe<N$, and set
\bal
\CN_\xe(x,y):= \frac{\max\{ |x-y|,|x|,|y|\}^N}{|x-y|^{N-2}\max\{ |x-y|,d(x),d(y)\}^2} + \max\{ |x-y|,d(x),d(y) \}^{-\xe}, \text{ and}
\eal
\bal
\CN_{N-\xe ,2}(x,y) := \frac{\max\{ |x-y|,|x|,|y|\}^{N-\xe}}{|x-y|^{N-2}\max\{ |x-y|,d(x),d(y)\}^2} \text{ as before.}
\eal
Moreover, set
\bal
G_{H^2,\xe}(x,y)&:= \frac{1}{|x-y|^{N-2}} \bigg( 1 \wedge \frac{d(x)d(y)}{|x-y|^2} \bigg) \bigg(1\wedge \frac{|x||y|}{|x-y|^2} \bigg)^{-N/2} \\
&\hspace{1cm} + \frac{d(x)d(y)}{(|x||y|)^{N/2}} \max\{ |x-y|,d(x),d(y) \}^{-\xe}, \text{ and}
\eal
\bal
\tilde{G}_{H^2,\xe}(x,y) := \frac{d(x)d(y)}{(|x||y|)^{N/2}} \CN_{N-\xe,2}(x,y)
\eal
Now recall estimate \eqref{Greenestb} for the Green kernel $G_{H^2}(x,y)$ and observe that there is a constant $C(\Omega,\xe)>0$ such that
\bal
|\ln (\min \{ |x-y|^{-2},d(x)^{-1}d(y)^{-1} \}) | \leq C(\Omega,\xe) \max\{ |x-y|,d(x),d(y) \}^{-\xe}.
\eal
This implies that
\bal
G_{H^2}(x,y) \lesssim G_{H^2,\xe}(x,y).
\eal
But it is also true that
\bal
G_{H^2,\xe}(x,y) \approx \frac{d(x)d(y)}{(|x||y|)^{N/2}} \CN_\xe(x,y)\quad \text{and}\quad \CN_\xe(x,y) \lesssim \CN_{N-\xe,2}(x,y),
\eal
hence
\bal
G_{H^2,\xe}(x,y) \lesssim \tilde{G}_{H^2,\xe}(x,y).
\eal
Finally set
\bal
\tilde{\BBG}_{H^2,\xe}[\tau](x):= \int_{\Omega} \tilde{G}_{H^2,\xe}(x,y)\,d\tau(y).
\eal

\begin{theorem}\label{0-existence-power-capacity}
Suppose that $\Sigma=\{ 0 \}$ and $\mu=H^2$. Let $p,q\geq 0$ with $p+q>1$ and let $\nu\in\GTM(\partial\Omega)$ be such that $\| \nu \|_{\GTM(\partial\Omega)}=1$, and assume that $$N+2-(N-2)p-Nq-2\xe>0.$$ Moreover, assume that
\bal
\nu(B) \leq C\,\mathrm{Cap}_{\BBN_{N-\xe,1},d^{p+1}d_0^{-\frac{N}{2}(p+q+1)}}^{(p+q)'}(B) \quad \forall B\in \CB(\overline{\Omega}).
\eal
Then the boundary value problem \eqref{power-BVP} possesses a non-negative weak solution provided that $\vgr$ is small enough.
\end{theorem}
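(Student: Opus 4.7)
The plan is to mirror the proof of Theorem \ref{Existence-capacity}, with the modified kernel $\tilde{G}_{H^2,\xe}$ in place of the Green kernel to absorb the logarithmic singularity that appears in \eqref{Greenestb} when $\mu=H^2$ and $\Sigma=\{0\}$. The first observation is that the hypothesis $N+2-(N-2)p-Nq-2\xe>0$ is exactly equivalent to the admissibility condition $\theta > -b-\xa$ of Lemma \ref{Concrete-conditions} with parameters $\xa=N-\xe$, $\xs=1$, $b=p+1$, $\theta=-\tfrac{N}{2}(p+q+1)$, as can be verified by direct calculation. Thus, the assumed capacity bound on $\nu$ can be fed into Lemma \ref{Concrete-conditions} (with these parameters, and noting $k=0$ so $d_\Sigma(x)=|x|=d_0(x)$) to yield the equivalent conditions \eqref{concrete-condition-1}--\eqref{concrete-condition-3}. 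In particular, the equation
\[
v=\BBN_{N-\xe,1}\bigl[|v|^{p+q}d^{p+1}d_0^{-\frac{N}{2}(p+q+1)}\bigr]+l\,\BBN_{N-\xe,1}[\nu]
\]
admits a positive solution for $l$ small enough, and the inequality $\BBN_{N-\xe,1}[\BBN_{N-\xe,1}[\nu]^{p+q}d^{p+1}d_0^{-\frac{N}{2}(p+q+1)}]\lesssim \BBN_{N-\xe,1}[\nu]<\infty$ holds.

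Next I would establish pointwise kernel estimates in terms of $\CN_{N-\xe,1}$. Using the computation in the preamble one has $G_{H^2}(x,y)\lesssim \tilde G_{H^2,\xe}(x,y)\approx d(x)d(y)|x|^{-N/2}|y|^{-N/2}\CN_{N-\xe,2}(x,y)\lesssim d(x)d(y)|x|^{-N/2}|y|^{-N/2}\CN_{N-\xe,1}(x,y)$, since replacing $\xs=2$ by $\xs=1$ only enlarges the kernel on $\Omega$. Bounds on the gradient $|\nabla_x G_{H^2}(x,y)|$ and on $K_{H^2}(x,\xi)$, $|\nabla_x K_{H^2}(x,\xi)|$ are derived from these via the scaling argument \eqref{estgradgreen} (applied to the subharmonic part and separately to the logarithmic part, which is harmonic in $x$ away from $y$ up to lower-order corrections); the net effect is that $|\nabla_x G_{H^2}(x,y)|$ and $K_{H^2}(x,\xi)$ lose one power of $d$ relative to $G_{H^2}$, and $|\nabla_x K_{H^2}(x,\xi)|$ loses two.

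With these in hand, I would set up the Schauder fixed point argument verbatim as in Theorem \ref{Existence-capacity}. Namely, define
\[
\BBA[u]:=\BBG_{H^2}[|u|^p|\nabla u|^q]+\BBK_{H^2}[\vgr\nu]
\]
on the space $V:=\{v\in W^{1,1}_{\loc}(\Omega): v\in L^{p+q}(\Omega;d^{1-q}d_0^{-N/2})\text{ and }\nabla v\in L^{p+q}(\Omega;d^{p+1}d_0^{-N/2})\}$, and consider the closed convex set
\[
E:=\bigl\{u\in W^{1,1}_{\loc}(\Omega): |u|\leq 2C_1 d\,d_0^{-N/2}\BBN_{N-\xe,1}[\vgr\nu],\ |\nabla u|\leq 2C_1 d_0^{-N/2}\BBN_{N-\xe,1}[\vgr\nu]\bigr\}.
\]
The kernel estimates above give $|\BBA[u]|\leq C_1\bigl(d\,d_0^{-N/2}\BBN_{N-\xe,1}[d\,d_0^{-N/2}|u|^p|\nabla u|^q]+d\,d_0^{-N/2}\BBN_{N-\xe,1}[\vgr\nu]\bigr)$ and an analogous gradient bound. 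Condition \eqref{concrete-condition-2} guarantees $E\subset V$, while \eqref{concrete-condition-3} yields a $\vgr_0=\vgr_0(p,q,C_1,C)>0$ so that $\BBA[E]\subset E$ whenever $\vgr\in(0,\vgr_0)$. Continuity and compactness of $\BBA:E\to\BBA[E]$ follow exactly as in the proof of Theorem \ref{Existence-capacity}, and Schauder's theorem then produces a fixed point; restricting to the positive cone gives a non-negative weak solution of \eqref{power-BVP}.

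The main technical obstacle is the gradient estimate for $G_{H^2}$ in the presence of the logarithmic term; one must verify that the scaling bound of \cite[Lemma 3.2]{GN1} still yields $|\nabla_x G_{H^2}(x,y)|\lesssim G_{H^2}(x,y)/\min\{|x-y|,d(x)\}$ after replacing $G_{H^2}$ by its dominant $\tilde G_{H^2,\xe}$, so that the same $\CN_{N-\xe,1}$-based estimates pass through. Beyond that, everything is a bookkeeping exercise of transporting the argument of Theorem \ref{Existence-capacity} to the parameters $(\xa,\xs,b,\theta)=(N-\xe,1,p+1,-\tfrac{N}{2}(p+q+1))$ and checking that the admissibility hypotheses of Lemma \ref{Concrete-conditions} reduce precisely to the stated condition $N+2-(N-2)p-Nq-2\xe>0$.
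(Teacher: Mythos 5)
Your proposal follows essentially the same route as the paper: apply Lemma \ref{Concrete-conditions} with $(\xa,\xs,b,\theta)=(N-\xe,1,p+1,-\tfrac{N}{2}(p+q+1))$, establish pointwise kernel bounds by $\CN_{N-\xe,1}$ using $G_{H^2}\lesssim\tilde G_{H^2,\xe}$, and then run the Schauder argument on the set $E$ exactly as in Theorem \ref{Existence-capacity}. Your parameter bookkeeping is right: $\theta>-b-\xa$ is precisely $N+2-(N-2)p-Nq-2\xe>0$, and $\theta>k-N-b$ (with $k=0$) is the weaker inequality $N+2-(N-2)p-Nq>0$, which is implied since $\xe>0$.

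The one place where you overestimate the difficulty is the gradient bound. You say one must \emph{re-verify} the scaling estimate $|\nabla_x G_{H^2}|\lesssim G_{H^2}/\min\{|x-y|,d(x)\}$ "after replacing $G_{H^2}$ by its dominant $\tilde G_{H^2,\xe}$," treating the logarithmic part separately. That is unnecessary: \eqref{estgradgreen} is already stated for the true Green function for any $\mu\leq H^2$ (including $\mu=H^2$), so there is nothing to redo at the level of the Green function itself. One simply combines \eqref{estgradgreen} with the pointwise domination $G_{H^2}\lesssim\tilde G_{H^2,\xe}\approx d(x)d(y)(|x||y|)^{-N/2}\CN_{N-\xe,2}(x,y)$, and then observes that
\[
\frac{d(x)}{\min\{|x-y|,d(x)\}}\cdot\frac{\CN_{N-\xe,2}(x,y)}{\CN_{N-\xe,1}(x,y)}
=\frac{d(x)}{\min\{|x-y|,d(x)\}}\cdot\frac{|x-y|}{\max\{|x-y|,d(x),d(y)\}}\leq 1
\]
in both cases $|x-y|\lessgtr d(x)$. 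This yields $|\nabla_x G_{H^2}(x,y)|\lesssim |x|^{-N/2}d(y)|y|^{-N/2}\CN_{N-\xe,1}(x,y)$ directly, with no separate treatment of the logarithmic term, and the same mechanism handles $\nabla_x K_{H^2}$. With that clarification, your argument matches the paper's proof.
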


\begin{proof}
Again we proceed as in the proof of Theorem \ref{Existence-capacity}. We apply Lemma \ref{Concrete-conditions}, now with parameters $\xa=N-\xe$, $\sigma=1$, $b=p+1$, $\theta= -\frac{N}{2}(p+q+1)$. We set the operator
\bal
\BBA[u]:=\BBG_{H^2}[|u|^p|\nabla u|^q]+\BBK_{H^2}[\vgr\nu],\quad u\in V
\eal
where
\bal
V:=\{ v\in W^{1,1}_{\loc}(\Omega): v\in L^{p+q}(\Omega,d^{1-q}d_0^{-N/2}) \text{ and } \nabla v \in L^{p+q}(\Omega,d^{p+1}d_0^{-N/2}) \}
\eal
It is a matter of straightforward calculations to show that
\bal
G_{H^2}(x,y) &\lesssim d(x)|x|^{-N/2}d(y)|y|^{-N/2}\CN_{N-\xe,1}(x,y), \\
|\nabla_x G_{H^2}(x,y)| &\lesssim |x|^{-N/2}d(y)|y|^{-N/2}\CN_{N-\xe,1}(x,y), \\
K_{H^2}(x,\xi) &\lesssim d(x)|x|^{-N/2}\CN_{N-\xe,1}(x,\xi), \\
|\nabla_x K_{H^2}(x,\xi)| &\lesssim |x|^{-N/2}\CN_{N-\xe,1}(x,\xi),
\eal
and also
\bal
|\BBA[u]|&\leq C_1 (dd_0^{-N/2}\BBN_{N-\xe,1}[dd_0^{-N/2}|u|^p|\nabla u|^q]+dd_0^{-N/2}\BBN_{N-\xe,1}[\vgr\nu]) \\
|\nabla \BBA[u]|&\leq C_1 (d_0^{-N/2}\BBN_{N-\xe,1}[dd_0^{-N/2}|u|^p|\nabla u|^q]+d_0^{-N/2}\BBN_{N-\xe,1}[\vgr\nu]).
\eal
Now set
\bal
E:=\{u\in W^{1,1}_{\loc}(\Omega): |u|\leq 2C_1 dd_0^{-N/2}\BBN_{N-\xe,1}[\vgr\nu],\ |\nabla u|\leq 2C_1 d_0^{-N/2}\BBN_{N-\xe,1}[\vgr\nu] \}
\eal
Using \eqref{concrete-condition-2} we see that $E\subset V$. Moreover, due to \eqref{concrete-condition-3} there is $\vgr_0$ such that
\bal
\vgr\in(0,\vgr_0) \Rightarrow \BBA[E]\subset E.
\eal
$E$ is easily seen to be closed and convex in $V$, and one can show that $\BBA:E\rightarrow \BBA[E]$ is continuous and compact. Thus the Schauder fixed point theorem applies, yielding a solution $u\in E$ of $u=\BBA[u]$, which is consequently a weak solution of \eqref{power-BVP}. Non-negativity can be obtained by restriction to the positive cone of $E$.
\end{proof}

\begin{proof}[\textbf{Proof of Theorem \ref{intro-0-bvp}}]
In view of Lemma \ref{Concrete-conditions} and the proof of Theorem \ref{0-existence-power-capacity}, it suffices to show that
\bal
\int_B \BBN_{N-\xe}[\chi_B \nu]^{p+q}d^{p+1}(x)|x|^{-\frac{N}{2}(p+q+1)}\,dx \leq C \nu(B) \quad \forall B\in \CB(\overline{\Omega}).
\eal

For (1), suppose that $\nu=\delta_0$. If $0\notin B$ then $\chi_B \delta_0 = 0$ and we have nothing to show, so assume that $0\in B$. Then $\chi_B \delta_0= \delta_0$ and 
\bal
&\int_B \BBN_{N-\xe}[\delta_0]^{p+q}d^{p+1}(x)|x|^{-\frac{N}{2}(p+q+1)}\,dx \\
&\hspace{1cm}= \int_B d^{p+1}|x|^{-\frac{N}{2}(p+q+1)-\xe(p+q)}\,dx \\
&\hspace{2cm}= \int_B |x|^{-\frac{N}{2}(p+q+1)-\xe(p+q)+p+1}\,dx \\
&\hspace{3cm}\approx \int_0^{\operatorname{diam}(\Omega)} r^{-\frac{N}{2}(p+q+1)-\xe(p+q)+p+N}\,dr
\eal
and observe that the last integral converges provided that $-\frac{N}{2}(p+q+1)-\xe(p+q)+p+N>-1$.

For (2), let $\supp \nu \subset \partial \Omega \setminus \{ 0\}$. Now observe that 
\bal
\CN_{N-\xe}(x,\xi) \approx d^{-1}(x)|x|^{N/2} K_{\mu}(x,\xi)
\eal
where $\mu= \frac{N^2-\xe^2}{4}< \frac{N^2}{4}$ (so that estimate \eqref{Martinest1} applies). It follows that 
\bal
&\int_B \BBN_{N-\xe}[\chi_B\nu]^{p+q}d^{p+1}(x)|x|^{-\frac{N}{2}(p+q+1)}\,dx \\
&\hspace{1cm} \approx \int_B \BBK_\mu[\chi_B\nu]^{p+q} d^{1-q}(x)|x|^{-N/2}\,dx \\
&\hspace{2cm} = \int_B (\BBK_\mu[\chi_B\nu]d^{-\frac{q}{p+q}}(x))^{p+q} d(x)|x|^{-N/2}\,dx \\
&\hspace{3cm} \lesssim \| \BBK_{\mu,\xg}[\nu] \|_{L^{p+q}(\Omega;\ei)} \leq C \nu(B),
\eal
provided that $p+q< \frac{N+1}{N+\xg-1}$ (so that Theorem \ref{martin-est-p1-p2} applies).
\end{proof}

\begin{theorem} \label{0-boundary-existence}
Suppose that $\Sigma=\{0\}$ and $\mu=H^2$. Let $p,q\geq 0$ with $p+q>1$ and $N+2-(N-2)p-Nq-2\xe>0$, and let $\nu\in \GTM_+(\partial \Omega)$ with $\supp \nu \subset \partial\Omega\setminus\{0\}$ be such that
\bal
\nu(B) \leq C \mathrm{Cap}^{\partial\Omega,(p+q)'}_{\frac{2-q}{p+q}}(B) \quad \forall B\in \CB(\partial\Omega),
\eal
where we assume that $q<2$ and $N(p+q)>p+2$. Then \eqref{power-BVP} possesses a non-negative weak solution provided that $\vgr\in (0,\vgr_0)$ for small enough $\vgr_0$.
\end{theorem}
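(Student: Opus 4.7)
The plan is to apply Theorem \ref{0-existence-power-capacity} after establishing a capacity comparison: specifically, I would show that for Borel subsets $B\subset \partial\Omega\setminus\{0\}$,
\bal
\mathrm{Cap}^{\partial\Omega,(p+q)'}_{\frac{2-q}{p+q}}(B)\lesssim \mathrm{Cap}_{\BBN_{N-\xe,1},\, d^{p+1}d_0^{-\frac{N}{2}(p+q+1)}}^{(p+q)'}(B),
\eal
so that the hypothesis of the theorem on $\nu$ implies the capacity bound required by Theorem \ref{0-existence-power-capacity}, at which point existence follows for $\vgr>0$ small enough. Via the dual characterisation \eqref{capacity-alternative} (together with the standard dual formulation of the Bessel capacity), this comparison is equivalent to proving that for every $\omega\in\GTM_+(\partial\Omega)$ supported in $\partial\Omega\setminus\{0\}$,
\bal
\|\BBN_{N-\xe,1}[\omega]\|_{L^{p+q}(\Omega;\,d^{p+1}d_0^{-\frac{N}{2}(p+q+1)})}^{p+q}\lesssim \|\BBB_{N-1,\frac{2-q}{p+q}}[\overline\omega]\|_{L^{p+q}(\R^{N-1})}^{p+q},
\eal
where $\overline\omega$ is the push-forward of $\omega$ under the flattening charts $\tilde T_i$ entering \eqref{besselcapacity} with $\Gamma=\partial\Omega$.

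To prove this operator inequality, I would fix $\beta>0$ with $B(0,\beta)\cap\supp\nu=\varnothing$ and split $\Omega=\Omega_1\cup\Omega_2$, $\Omega_1:=\{x\in\Omega:|x|<\beta/2\}$, $\Omega_2:=\Omega\setminus\Omega_1$. On $\Omega_1$, for every $\xi\in\supp\omega$ one has $|x-\xi|\geq\beta/2$, $d(\xi)=0$, $d(x)\leq|x|<\beta/2$, so inspection of the formula defining $\CN_{N-\xe,1}$ gives $\CN_{N-\xe,1}(x,\xi)\approx 1$ uniformly, hence $\BBN_{N-\xe,1}[\omega](x)\lesssim \omega(\partial\Omega)$. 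Using $d(x)\leq|x|$, the $\Omega_1$-contribution is bounded by $C\,\omega(\partial\Omega)^{p+q}$ times
\bal
\int_{\Omega_1} d^{p+1}(x)|x|^{-\frac{N}{2}(p+q+1)}\,dx\lesssim\int_0^{\beta/2} r^{p+N-\frac{N}{2}(p+q+1)}\,dr<\infty,
\eal
where the radial integral converges because the exponent exceeds $-1$, a condition equivalent to $N+2-(N-2)p-Nq>0$, which is a consequence of the subcriticality hypothesis $N+2-(N-2)p-Nq-2\xe>0$. Since $\supp\overline\omega$ lies in a fixed compact region of $\R^{N-1}$ on which the Bessel kernel $\CB_{N-1,\frac{2-q}{p+q}}$ is bounded below by a positive constant, $\omega(\partial\Omega)^{p+q}=\overline\omega(\R^{N-1})^{p+q}\lesssim \|\BBB_{N-1,\frac{2-q}{p+q}}[\overline\omega]\|_{L^{p+q}(\R^{N-1})}^{p+q}$, which closes the estimate on $\Omega_1$.

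On $\Omega_2$ the factor $d_0(x)=|x|\approx 1$ so the weight degenerates to $d^{p+1}$, and the numerator $\max\{|x-\xi|,|x|,|\xi|\}^{N-\xe}$ in $\CN_{N-\xe,1}(x,\xi)$ is bounded for $\xi\in\supp\omega$, reducing the kernel to the purely non-Hardy form already handled in the proof of Theorem \ref{intro-Bessel-Omega}. Covering $\supp\nu$ by the flattening charts $\{T_i\}_{i=1}^m$ of \eqref{besselcapacity} (which may be chosen disjoint from a neighbourhood of $0$) and passing to the flattened variables, I would then invoke \cite[Proposition 2.9]{BHV} with parameters $\xa=\xb=0$, $s=(p+q)'$, $\xa_0=p+1$; the technical assumptions $q<2$ and $N(p+q)>p+2$ are exactly the hypotheses required by that proposition. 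Combining the two regional estimates yields the desired operator inequality, hence the capacity comparison, and Theorem \ref{0-existence-power-capacity} then produces the non-negative weak solution of \eqref{power-BVP} for all $\vgr\in(0,\vgr_0)$ with $\vgr_0$ sufficiently small.

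The main obstacle is the treatment of the $\Omega_1$ region, where the weight $|x|^{-\frac{N}{2}(p+q+1)}$ remains strongly singular at the origin despite $\supp\nu$ lying away from $0$: this singular weight survives the localisation, and its integrability against $d^{p+1}\,dx$ near the origin is the very reason the subcriticality condition $N+2-(N-2)p-Nq-2\xe>0$ appears in the statement of the theorem. The rest of the argument is a fairly direct adaptation of the scheme developed in the proof of Theorem \ref{intro-Bessel-Omega}, with Theorem \ref{0-existence-power-capacity} taking the place of Theorem \ref{Existence-capacity}.
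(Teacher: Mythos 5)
Your proof is correct and follows essentially the same route as the paper: you reduce to Theorem \ref{0-existence-power-capacity} by comparing capacities via the dual formulation \eqref{capacity-alternative}, discard the region near the origin using $\supp\nu\cap B(0,\beta)=\varnothing$, observe that away from $0$ the weight $d_0^{-\frac{N}{2}(p+q+1)}$ is bounded and $\CN_{N-\xe,1}(x,\xi)\approx|x-\xi|^{-N}$, and then invoke \cite[Proposition 2.9]{BHV}. The one place where you add value is the explicit treatment of the near-origin region $\Omega_1$, where you verify that $\int_{\Omega_1}d^{p+1}|x|^{-\frac{N}{2}(p+q+1)}\,dx<\infty$ under the stated exponent condition and compare $\omega(\partial\Omega)^{p+q}$ with $\|\BBB_{N-1,\frac{2-q}{p+q}}[\overline\omega]\|_{L^{p+q}}^{p+q}$; the paper compresses this into a single ``it is easy to show'' step.
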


\begin{proof}
We proceed as in the proof of Theorem \ref{intro-Bessel-Omega}. Selecting $\Gamma=\partial\Omega$, in view of Theorem \ref{0-existence-power-capacity} it suffices to show that
\bal
\mathrm{Cap}^{(p+q)'}_{\CN_{N-\xe,1},d^{p+1}d_0^{-\frac{N}{2}(p+q+1)}}(E\cap K_i) \approx \mathrm{Cap}^{(p+q)'}_{\CB_{N-1},\frac{2-q}{p+q}}(\tilde{T}_i(E\cap K_i))
\eal
for $i=1,\ldots,m$. Let $\xl\in\GTM_+(\partial\Omega)\cap\GTM_c(\partial\Omega\setminus\Sigma)$ be such that 
\bal \BBN_{N-\xe,1}[\lambda] \in L^{p+q}(\Omega,d^{p+1}d_0^{-\frac{N}{2}(p+q+1)})\eal 
and set $d\lambda_{K_i}:= \chi_{K_i}d\lambda$. Then there is $\beta=\beta(\nu)>0$ such that $B(0,\beta)\cap\supp\nu = \varnothing$. It is easy to show that 
\bal
&\int_\Omega \BBN_{N-\xe,1}[\lambda_{K_i}]^{p+q}d^{p+1} |x|^{-\frac{N}{2}(p+q+1)}\,dx \\
&\hspace{1cm} \approx \sum_{i=1}^m \int_{O_i} \BBN_{N-\xe,1}[\lambda_{K_i}]^{p+q}d^{p+1} |x|^{-\frac{N}{2}(p+q+1)}\,dx \\
&\hspace{2cm} \approx \sum_{i=1}^m \int_{O_i\setminus B(0,\beta)} \BBN_{N-\xe,1}[\lambda_{K_i}]^{p+q}d^{p+1}\,dx,
\eal
and note that for $x\in O_i\setminus B(0,\beta)$ and $\xi\in\supp\nu$ we have that $|x| \approx 1 \approx |\xi|$ and $\CN_{N-\xe,1}(x,\xi) \approx |x-\xi|^{-N}$.

Finally, we may apply \cite[Proposition 2.9]{BHV} (with parameters $\xa=\xb=0$, $s=(p+q)'$, $\xa_0=p+1$) to obtain the conclusion. Note that the extra conditions on $p,q$ are such that the proposition is applicable.
\end{proof}



\begin{thebibliography}{99}

\bibitem{Ad} D.R. Adams, L.I. Hedberg, \emph{Function Spaces and Potential Theory,} Springer, New York, 1996.

\bibitem{BEL} A.A. Balinsky, W.D. Evans and R.T. Lewis,
{\em The analysis and geometry of Hardy's inequality.} Universitext. Springer, Cham, 2015. xv+263 pp.

\bibitem{BMM} C Bandle, M. Marcus, V. Moroz, \emph{Boundary singularities of solutions of semilinear elliptic equations in the half-space with a Hardy potential,} Israel J. Math. 222 (2017), no. 1, 487–514.

\bibitem{BMR} C. Bandle, V Moroz,and W. Reichel,  (2008), \emph{‘Boundary blowup’ type sub-solutions to semilinear elliptic equations with Hardy potential,} J. Lond. Math. Soc. (2) 77 (2008), no. 2, 503–523.

\bibitem{BGT} G. Barbatis, K. T. Gkikas and A. Tertikas, \emph{Heat and Martin kernel estimates for Schr\"{o}dinger operators with critical Hardy potentials,} Math. Ann. (2023).

\bibitem{BFT} G. Barbatis, S. Filippas and A. Tertikas, \emph{A unified approach to improved $L^p$ Hardy inequalities with best constants,} Trans. Amer. Math. Soc. {\bf 356} (2004), 2169-2196.

\bibitem{BFT3} G. Barbatis, S. Filippas and A. Tertikas,
{\em Sharp Hardy and Hardy-Sobolev inequalities with point singularities on
the boundary,} J. Math. Pures Appl. {\bf 117} (2018), 146–184.


\bibitem{BHV} M. F. Bidaut-V\'eron, G. Hoang, Q.-H. Nguyen and L. V\'eron, \emph{An elliptic semilinear equation with source term and boundary measure data: The supercritical case}, J. Funct. Anal. {\bf 269} (2015), no. 7, 1995--2017.

\bibitem{BVN} M. F. Bidaut-V\'eron,Q.-H. Nguyen, L. V\'eron, {\em Quasilinear Lane–Emden equations with absorption and measure data}, J. Math. Pures Appl. (9)102(2014), no.2, 315–337.

\bibitem{BVi} M. F. Bidaut-V\'eron and L. Vivier, {\em An elliptic semilinear equation with source term involving boundary measures: the subcritical case}, Rev. Mat. Iberoamericana {\bf 16} (2000), 477-513.

\bibitem{BM} H. Brezis and M. Marcus, \emph{Hardy's inequalities revisited}, Ann. Sc. Norm. Super. Pisa Cl. Sci. (5) 25  (1997), 217-237.

\bibitem{BMS} H. Brezis, M. Marcus and I. Shafrir, {\em Extremal functions for Hardy's inequality with weight}, J. Funct. Anal. 171 (2000), no. 1, 177–191.


\bibitem{C}  F. C\^irstea, \emph{A complete classification of the
isolated singularities for nonlinear elliptic equations with inverse square
potentials},  Mem. Amer. Math. Soc. 227 (2014), no. 1068, vi+85 pp.

\bibitem{DD2} J. D\'avila and L. Dupaigne, {\em Hardy-type
inequalities}, J. Eur. Math. Soc. {\bf 6} (2004), 335--365.

\bibitem{D} L. Dupaigne, \emph{A nonlinear elliptic Pde with the inverse square potential,} J. Anal. Math. 86, 359–398 (2002). 

\bibitem{DN} L. Dupaigne and G. Nedev, {\em Semilinear elliptic PDE’s with a singular potential}, Adv. Differential Equations {\bf 7} (2002), 973--1002.

\bibitem{FF} M. Fall and F. Mahmoudi, \emph{Weighted Hardy inequality with higher dimensional singularity on the boundary,} Calc. Var. Partial Differential Equations {\bf 50} (2014), no. 3-4, 779-798.

\bibitem{FMoT2} S. Filippas, L. Moschini, A. Tertikas, \emph{Sharp two-sided heat kernel estimates for critical Schr\"odinger operators on bounded domains}, Comm. Math. Phys. 273 (2007), 237-281.

\bibitem{FT} S. Filippas, A. Tertikas,\emph{ Optimizing improved Hardy inequalities,} J. Funct. Anal. {\bf 192} (2002), 186-233.

\bibitem{GM} N. Ghoussoub and A. Moradifam, {\em Functional inequalities: new perspectives and new applications,}
Mathematical Surveys and Monographs, 187. American Mathematical Society, Providence, RI, 2013. xxiv+299.

\bibitem{GkiNg_source} K. T. Gkikas,  P.-T. Nguyen, \emph{Semilinear elliptic Schr\"odinger equations involving singular potentials and source terms}, Nonlinear Anal.238(2024), Paper No. 113403, 44 pp..

\bibitem{GkiNg_absorption} K. T. Gkikas, P.-T. Nguyen, \emph{Semilinear elliptic Schr\"odinger equations with singular potentials and absorption terms}, J. Lond. Math. Soc. (2)109(2024), no.1, Paper No. e12844, 53 pp..

\bibitem{GN2} K. T. Gkikas, P.-T.Nguyen, \emph{ Semilinear elliptic equations involving power nonlinearities and Hardy potentials with boundary singularities}, Proc. Roy. Soc. Edinburgh Sect. A, 2023:1-58

\bibitem{GkiNg_linear} K. T.  Gkikas, P.-T. Nguyen, \emph{Martin kernel of Schr\"odinger operators with singular potentials and applications to B.V.P. for linear elliptic equations}, Calc. Var. {\bf 61}, 1 (2022).

\bibitem{GN-gradient} K. T. Gkikas, P.-T. Nguyen, \emph{Elliptic Equations with Hardy Potential and Gradient-Dependent Nonlinearity}, Adv. Nonlinear Stud.20(2020), no.2, 399-435.

\bibitem{GN1} K. T. Gkikas,  P.-T. Nguyen, \emph{Semilinear elliptic equations with Hardy potential and gradient
nonlinearity,} Rev. Mat. Iberoam. 36(4) 1207-1256 (2020).


\bibitem{GkV} K. T. Gkikas, L. V\'eron, \emph{Boundary singularities of solutions of semilinear elliptic equations with critical Hardy potentials}, Nonlinear Anal. \textbf{121} (2015), 469--540.

\bibitem{GV} A. Gmira and L. V\'eron,\emph{Boundary singularities of solutions of some nonlinear elliptic equations,} Duke Math. J. {\bf 64} (1991) 271--324.

\bibitem{KV} N.J. Kalton and I.E. Verbitsky, \emph{Nonlinear equations and weighted norm inequality}, Trans. Amer. Math. Soc. {\bf 351} (1999) 3441--3497.

\bibitem{Kuf} A. Kufner, \emph{Weighted Sobolev spaces}, Teubner-Textezur Mathematik 31 (Teubner, Leipzig, 1981).

\bibitem{hardy-marcus} M. Marcus, V. J. Mizel and Y. Pinchover, \emph{On the best constant for Hardy's inequality in $\mathbb{R}^n$}, Trans. Amer. Math. Soc. {\bf 350} (1998), 3237--3255.

\bibitem{MarMor} M. Marcus and V. Moroz, \emph{Moderate solutions of semilinear elliptic equations with Hardy potential under minimal restrictions on the potential}, Ann. Sc. Norm. Super. Pisa Cl. Sci. {\bf 18} (2018), 39--64.

\bibitem{MT} M. Marcus and P.-T. Nguyen, \emph{Schr\"odinger equations with singular potentials: linear and nonlinear boundary value problems.} Math. Ann. {\bf 374} (2019), no. 1-2, 361--394.

\bibitem{MarNgu} M. Marcus and P.-T. Nguyen, \emph{Moderate solutions of semilinear elliptic equations with Hardy potential}, Ann. Inst. H. Poincar\'e Anal. Non Lin\'eaire {\bf 34} (2017), 69--88.

\bibitem{MVbook} M. Marcus and L. V\'eron, {\em Nonlinear second order elliptic equations involving measures}, De Gruyter Series in Nonlinear Analysis and Applications, 2013.

\bibitem{Vbook} L. V\'{e}ron, \emph{Singularities of Solutions of Second Order Quasilinear Equations}, Pitman Research Notes in Math. Series 353, (1996).

\end{thebibliography}
\end{document}